 \definecolor{refkey}{gray}{.5}   
 \definecolor{labelkey}{gray}{.5} 
\def\Xint#1{\mathchoice
{\XXint\displaystyle\textstyle{#1}}%
{\XXint\textstyle\scriptstyle{#1}}%
{\XXint\scriptstyle\scriptscriptstyle{#1}}%
{\XXint\scriptscriptstyle%
\scriptscriptstyle{#1}}%
\!\int}
\def\XXint#1#2#3{{\setbox0=\hbox{$#1{#2#3}{%
\int}$ }
\vcenter{\hbox{$#2#3$ }}\kern-.6\wd0}}
\def\dashint{\Xint-}
\numberwithin{equation}{section}
\def\ds{\displaystyle}
\DeclareMathOperator*{\supp}{supp}
\DeclareMathOperator*{\Tr}{Tr}
\DeclareMathOperator*{\re}{Re}
\renewcommand{\Re}{\re}
\DeclareMathOperator*{\im}{Im}
\renewcommand{\Im}{\im}
\newcommand{\X}{\mathbf{X}}
\newcommand{\K}{\mathcal{K}}
\DeclareMathOperator{\PE}{PE}
\DeclareMathOperator{\crit}{cr}
\DeclareMathOperator{\loc}{loc}
\DeclareMathOperator{\rest}{rest}
\DeclareMathOperator{\arccot}{arccot}
\DeclareMathOperator{\sgn}{sgn}
\newcommand{\Andreief}{Andr\'{e}ief}
\newcommand{\Wcirc}{\overset{\circ}{W}}
\newcommand{\Id}{\mathbf{1}}
\newtheorem{theorem}{Theorem}[section]
\newtheorem{lemma}[theorem]{Lemma}
\newtheorem{proposition}[theorem]{Proposition}
\newtheorem{assumption}[theorem]{Assumption}
\newtheorem{Definition}[theorem]{Definition}
\newtheorem{Remark}[theorem]{Remark}
\newenvironment{remark}{\begin{Remark}\rm}{\end{Remark}}
\newtheorem{Example}[theorem]{Example}
\title{Propagation of singular behavior for Gaussian perturbations of random matrices}
\author{Tom Claeys \thanks{Institut de Recherche en Math\'ematique et Physique, Universit\'e catholique de Louvain, Chemin du Cyclotron 2, Louvain-La-Neuve, Belgium
\href{mailto:tom.claeys@uclouvain.be}{\nolinkurl{tom.claeys@uclouvain.be}}}
\and
Arno B.J. Kuijlaars \thanks{Department of Mathematics, Katholieke Universiteit Leuven, 
Celestijnenlaan 200 B, Leuven, Belgium \href{mailto:arno.kuijlaars@kuleuven.be}{\nolinkurl{arno.kuijlaars@kuleuven.be}}}
\and
Karl Liechty \thanks{Department of Mathematical Sciences, DePaul University, Chicago, IL, 60614 USA
\href{mailto:kliechty@depaul.edu}{\nolinkurl{kliechty@depaul.edu}}}
 \and
Dong Wang\thanks{Department of Mathematics, National University of Singapore, Singapore, 119076, \href{mailto:matwd@nus.edu.sg}{\nolinkurl{matwd@nus.edu.sg}}}
}
\date{\today}
\begin{document}
\maketitle

\begin{abstract}
We study the asymptotic behavior of the eigenvalues of Gaussian perturbations of large Hermitian random matrices for which the limiting eigenvalue density vanishes at a singular interior point or vanishes faster than a square root at a singular edge point. First, we show that the singular behavior propagates macroscopically for sufficiently small Gaussian perturbations, and we describe the macroscopic eigenvalue behavior for Gaussian perturbations of critical size.
Secondly, for sufficiently small Gaussian perturbations of unitary invariant random matrices, we prove that the microscopic eigenvalue correlations near the singular point are described by the same limiting kernel as in the unperturbed case. We also interpret our results in terms of nonintersecting Brownian paths with random starting positions, and we establish multi-time generalizations of the microscopic results.
\end{abstract}

\section{Introduction and statement of results}

\subsection{Introduction}

This paper deals with sums of $n \times n$ random matrices 
\begin{equation} \label{def:X}
  X = X_{\tau} := M + \sqrt{\tau} H
\end{equation} 
where $\tau \geq 0$, $M = M_n$ is a Hermitian random matrix, and $H = H_n$ is a properly scaled Gaussian unitary ensemble (GUE) matrix 
independent of $M$, defined by the probability distribution on the space of Hermitian matrices,
\begin{equation}\label{def:GUE}
  \frac{1}{Z_n^{\rm GUE}} \, e^{-\frac{n}{2} \Tr H^2} \, dH,\qquad 
  dH=\prod_{j=1}^ndH_{jj}\prod_{1\leq i<j\leq n}d\Re H_{ij} d\Im H_{ij},
\end{equation}
with $Z_n^{\rm GUE}$ a normalizing constant. $X_{\tau} = M$ when $\tau = 0$, and otherwise is an additive Gaussian perturbation of $M$.
We are interested in the global and local statistics of eigenvalues of the random matrices $X$ in the 
limit where the size $n$ of the matrices tends to infinity. 

The eigenvalue distribution of $\sqrt{\tau}H$ converges almost surely as $n \to \infty$, in the weak sense, to the semi-circle law on the interval $[-2\sqrt{\tau}, 2\sqrt{\tau}]$, which we denote $\lambda_{\tau}$, defined as
\begin{equation} \label{def:lambdatau}
  d\lambda_{\tau}(s) = \frac{1}{2\pi \tau} \sqrt{4\tau - s^2} \, ds,
		\qquad s \in [-2\sqrt{\tau}, 2\sqrt{\tau}].
	\end{equation}
We consider Hermitian random matrices $M=M_n$ in \eqref{def:X} whose eigenvalue distributions also converge almost surely, in the weak sense, to some limiting distribution $\mu_0$. We will be interested in cases where the limiting
distribution $\mu_0$ has a certain singular behavior at an interior point or at an edge point of its support. The singular interior and edge points we consider correspond to the ones that  can occur in unitary invariant random matrix ensembles, and this will be our main case of interest.   

To describe the global limiting distribution of the eigenvalues of $X$ as $n\to\infty$, we can rely on general well-known results from free probability theory, see e.g.\ \cite{Anderson-Guionnet-Zeitouni10, Hiai-Petz00, Speicher93, Dykema-Nica-Voiculescu92}, which imply that the matrix $X$ almost 
surely has a weak limiting eigenvalue distribution $\mu_\tau$. The distribution $\mu_\tau$ is the free additive convolution
\begin{equation} \label{eq:muX} 
	\mu_\tau = \mu_0 \boxplus \lambda_{\tau}
	\end{equation}
 of $\mu_0$ with the semi-circle law $\lambda_{\tau}$.
We will show that the distribution $\mu_\tau$
also has a singular point of the same type as $\mu_0$, as long as $\tau$ is smaller than a certain critical value $\tau_{\crit}>0$.

In our study of the local eigenvalue statistics, we assume additionally that $M$ is taken from a 
unitary invariant random matrix ensemble. Then the local eigenvalue statistics of $M$ are described by a 
limiting eigenvalue correlation kernel which depends on the nature of the singular point, and which is 
built out of Painlev\'e functions. We will show that the same limiting kernel describes the local eigenvalue statistics of $X=X_\tau$ for $0 \leq \tau<\tau_{\crit}$. That is, the local statistics are unchanged under small enough additive Gaussian perturbations.

Gaussian perturbations of Hermitian matrices have an alternative interpretation in terms of a model of Brownian paths 
which are conditioned not to intersect.
Consider $n$ particles in Brownian motion with diffusion parameter $n^{-1/2}$, starting at the eigenvalues 
$a_1< \cdots < a_n$ of a Hermitian matrix $M$ at time $0$, and ending at fixed points $b_1< \cdots < b_n$ at time $1$.
We condition the paths $x_1(t)<\cdots <x_n(t)$ such that they do not intersect for $t\in (0,1)$, and we 
let the ending points $b_1,\ldots, b_n$ tend to $0$. Then the joint probability density function of  $x_1(t)< \cdots < x_n(t)$ at time $t \in [0, 1)$ is the same as that of the eigenvalues $\lambda_1(t)<\cdots<\lambda_n(t)$ of the matrix
\begin{equation}\label{relation random matrix Brownian paths}
	(1 - t)M + \sqrt{t(1 - t)} H=(1-t)X_{\tau},\qquad \tau=\frac{t}{1-t}.
\end{equation}
This is true if $M$ is a deterministic matrix, but we can also take $M$ to be random, and define the model of 
nonintersecting Brownian paths such that the initial distribution of $x_1(0)<\cdots <x_n(0)$ is the same as 
that of the eigenvalues of the random matrix $M$. 
Our results in the random matrix setting will thus directly imply results for nonintersecting Brownian paths with random starting positions. In addition we will prove multi-time generalizations of those results.

\subsection{Propagation of singular behavior}

We start by describing in a precise way the conditions we impose on the limiting distribution $\mu_0$ of 
the eigenvalues of the random Hermitian matrix $M = M_n$.

\begin{assumption}\label{assumption:a}
The limiting eigenvalue distribution
  $\mu_0$ has a compact support with a continuous density $\psi_0$.
\end{assumption}
\begin{assumption} \label{assumption:b}
  There exist $x^* \in \supp(\mu_0)$, $c_0 > 0$, and an integer $k \geq 1$, 
  such that for some function $h(s)$ which is analytic at $s=x^*$ 
  with  $h(x^*)=1$, we have either
  \begin{enumerate}[label=(\alph*)]
  \item \label{enu:assumption:b_1}
    $x^*$ is an interior point of the support of $\mu_0$ and
    \begin{equation} \label{eq:singularint} 
      \psi_0(s)  = c_0^{2k+1}(s-x^*)^{2k}h(s),\qquad s\in\supp(\mu_0),
    \end{equation} 
    or
  \item \label{enu:assumption:b_2}
    there exists $\epsilon > 0$ such that  
    \begin{equation} \label{eq:singularedge} 
      \psi_0(s) =
      \begin{cases}
        c_0^{2k+3/2} (x^*-s)^{2k+1/2}h(s), & \text{ for }  s\in 
        [x^*-\epsilon, x^*], \\
        0, & \text{ for } s \in [x^*, x^*+\epsilon].
      \end{cases} 
    \end{equation}
  \end{enumerate}
\end{assumption}
If we have \eqref{eq:singularint} we say that $x^*$ is a {\em singular interior point with exponent $2k$}, and if we have \eqref{eq:singularedge} we say that $x^*$ is a {\em singular right edge point with exponent $2k+1/2$}.
There is a similar notion of {\em singular left edge point} for which completely 
analogous results hold. For ease of presentation we only consider   
singular right edge points. We note that in case \ref{enu:assumption:b_2}, $x^*$ may not be the rightmost edge of the support, since the support may be on several intervals, and $x^*$ can be the right edge of any interval. See Remark \ref{rmk:macrocritical}\ref{enu:rmk:macrocritical_4}.

These are the singular behaviors that can and do occur for eigenvalues of 
Hermitian matrices $M$  from a unitary ensemble  
\begin{equation} \label{def:UE} 
	\frac{1}{Z_n} \, e^{-n \Tr V(M)} \, dM, 
	\end{equation}
where $V : \mathbb R \to \mathbb R$ is real analytic  with sufficient growth at $\pm\infty$,
and $Z_n$ is a normalizing constant, see \cite{Deift99,Deift-Kriecherbauer-McLaughlin98}. 
For now, $M = M_n$ can be any random Hermitian matrix for which the eigenvalue distribution converges as $n \to \infty$ almost surely, weakly, to a measure $\mu_0$ satisfying Assumptions \ref{assumption:a} and \ref{assumption:b}. 


The first aim of the paper is to verify that the singular behaviors \eqref{eq:singularint}
and \eqref{eq:singularedge} persist for random matrices $X_{\tau} = M + \sqrt{\tau} H$ defined by \eqref{def:X} if
$\tau > 0$ is smaller than the critical value 
\begin{equation} \label{def:taucr} 
\tau_{\crit} = \left[ \int \frac{d\mu_0(s)}{(x^*-s)^2} \right]^{-1}.
\end{equation}
Due to the higher order vanishing  \eqref{eq:singularint}--\eqref{eq:singularedge} 
of the density of $\mu_0$ at $x^*$, the integral in \eqref{def:taucr} 
indeed converges, and therefore $\tau_{\crit} > 0$.
To prove the propagation of singular behavior, we study the behavior near $x^*$ of the density of the free convolution $\mu_\tau$ given in \eqref{eq:muX}.
We will not use the general definition of free convolution here, but rather
its special analytic form when applied to free convolution with a semi-circle law 
that is due to Biane \cite{Biane97}. Using this  approach we are able to prove that the
singular behaviors \eqref{eq:singularint} and \eqref{eq:singularedge} persist
in the following precise sense.

\begin{theorem} \label{thm:mainmacro}
  Suppose $\mu_0$ is a probability measure on $\mathbb R$ satisfying Assumptions \ref{assumption:a} and \ref{assumption:b}. Let $0 < \tau < \tau_{\crit}$
  where $\tau_{\crit}$ is given by \eqref{def:taucr} and denote
  \begin{equation}\label{def:ctau}
    c_{\tau} := \frac{\tau_{\crit}}{\tau_{\crit}-\tau}c_0.
  \end{equation}
  Then $\mu_{\tau} = \mu_0 \boxplus \lambda_{\tau}$ has a density $\psi_{\tau}$ with a singular point
  \begin{equation} \label{def:xtau}
    x_{\tau}^* = x^* + \tau \int \frac{d\mu_0(s)}{x^*-s}. 
  \end{equation}
  \begin{enumerate}[label=(\alph*)]
  \item \label{enu:thm:mainmacro_1}
    Under Assumption \ref{assumption:b}\ref{enu:assumption:b_1}, $x_{\tau}^*$
    is an interior point of the support of $\mu_{\tau}$ and 
    the density $\psi_{\tau}(s)$ satisfies
    \begin{equation} \label{eq:propsingularint} 
      \psi_\tau(s) = c^{2k+1}_{\tau} (s-x_{\tau}^*)^{2k} \left( 1 + O(s - x^*_{\tau}) \right), \quad \text{as } s \to x_{\tau}^*.
    \end{equation}
  \item  \label{enu:thm:mainmacro_2}
    Under Assumption \ref{assumption:b}\ref{enu:assumption:b_2}, $x_{\tau}^*$ is a right edge point of the support of $\mu_{\tau}$ and the density $\psi_{\tau}(s)$ satisfies
    for some $\epsilon > 0$,
    \begin{equation} \label{eq:propsingularedge} 
      \psi_{\tau}(s) =
      \begin{cases}
        c^{2k+3/2}_{\tau} (x_{\tau}^*-s)^{2k+1/2} (1 + O((x^*_{\tau}-s)^{1/2})), & \text{as } s \to (x_{\tau}^*)_-, \\
        0, & \text{for } s \in [x_{\tau}^*, x_{\tau}^* + \epsilon].
      \end{cases}
    \end{equation}
  \end{enumerate}
\end{theorem}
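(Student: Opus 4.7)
The plan is to apply the analytic subordination approach to free convolution with a semi-circle law due to Biane \cite{Biane97}. With $G_{\mu_0}(z) = \int d\mu_0(s)/(z-s)$ and $H_\tau(z) = z + \tau G_{\mu_0}(z)$, Biane's theorem produces a conformal bijection $\omega = \omega_\tau : \mathbb{C}^+ \to \Omega_\tau \subset \mathbb{C}^+$ (the subordination function) which is a right inverse of $H_\tau$ and satisfies $G_{\mu_\tau}(z) = G_{\mu_0}(\omega(z))$. Combining these two relations yields the clean identity
\[
  \psi_\tau(x) = -\tfrac{1}{\pi}\Im G_{\mu_\tau}(x+i0^+) = \tfrac{1}{\pi\tau}\Im\omega(x+i0^+),
\]
reducing the theorem to an asymptotic analysis of $\omega$ near $x_\tau^*$.

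The starting point is a local decomposition of $G_{\mu_0}$ on a small disk $D$ around $x^*$. Under Assumption \ref{assumption:b}\ref{enu:assumption:b_1} the density extends analytically, and the Plemelj jump gives $G_{\mu_0}(z) = g_1(z) - i\pi\tilde\psi_0(z)$ on $D \cap \mathbb{C}^+$, with $g_1$ analytic and $\tilde\psi_0(z) = c_0^{2k+1}(z-x^*)^{2k}h(z)$. Under Assumption \ref{assumption:b}\ref{enu:assumption:b_2}, a fractional-power Plemelj computation gives, on $D$ slit along the part of the support adjacent to $x^*$,
\[
  G_{\mu_0}(z) = g(z) + (z-x^*)^{2k+1/2}g_2(z) \quad (\text{principal branch})
\]
with $g, g_2$ analytic on $D$, $g(x^*) = \int d\mu_0(s)/(x^*-s)$ and $g_2(x^*) = -\pi c_0^{2k+3/2}$. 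Setting $\Phi(z) := z + \tau\cdot(\text{analytic part})$ in each case, one computes $\Phi(x^*) = x_\tau^*$ (using $\psi_0(x^*) = 0$) and $\Phi'(x^*) = 1 - \tau/\tau_{\crit} = (\tau_{\crit}-\tau)/\tau_{\crit} > 0$; the hypothesis $k \geq 1$ is exactly what ensures the singular part contributes nothing to $\Phi'(x^*)$. Since $\Phi$ is then a local biholomorphism, I pass to the coordinate $w = \Phi(z) - x_\tau^*$, in which $z - x^* = \beta w(1+O(w))$ with $\beta := \tau_{\crit}/(\tau_{\crit}-\tau) = c_\tau/c_0$.

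In the $w$ coordinate,
\[
  H_\tau(z(w)) - x_\tau^* = \begin{cases}
    w - i\pi\tau c_0^{2k+1}\beta^{2k}\,w^{2k}(1+O(w)), & \text{case (a),}\\
    w - \pi\tau c_0^{2k+3/2}\beta^{2k+1/2}\,w^{2k+1/2}(1+O(w)), & \text{case (b).}
  \end{cases}
\]
Writing $w = u + iv$ and imposing $H_\tau(z(w)) \in \mathbb{R}$: in case (a), the condition $\Im H_\tau = 0$ is solved to leading order by the curve $v \sim \pi\tau c_0^{2k+1}\beta^{2k}u^{2k}$ in $\mathbb{C}^+$, which is valid for both signs of $u$; along it $\Re H_\tau \approx u$, so $\Im\omega \sim \beta v$ and, using $c_0\beta = c_\tau$,
\[
  \psi_\tau(x) = c_0^{2k+1}\beta^{2k+1}(x-x_\tau^*)^{2k}(1+O(x-x_\tau^*)) = c_\tau^{2k+1}(x-x_\tau^*)^{2k}(1+O(x-x_\tau^*)),
\]
which also shows $x_\tau^*$ is an interior point of the support. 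In case (b), for $u > 0$ the equation $w - \alpha w^{2k+1/2} = u$ (with $\alpha = \pi\tau c_0^{2k+3/2}\beta^{2k+1/2}$) has a unique small real positive root, so $\omega$ is real and $\psi_\tau \equiv 0$ in a right-neighborhood of $x_\tau^*$; for $u < 0$, the principal branch gives $w^{2k+1/2}|_+ = i|u|^{2k+1/2}(1+o(1))$ above the cut, and dominant balance forces $\Im w \sim \alpha|u|^{2k+1/2}$, whence $\Im\omega \sim \beta\Im w$ and the same algebra yields $\psi_\tau(x) = c_\tau^{2k+3/2}(x_\tau^*-x)^{2k+1/2}(1+O((x_\tau^*-x)^{1/2}))$.

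The main technical hurdle is case (b): one must verify that the Biane subordination function $\omega$ actually selects the branch of $w - \alpha w^{2k+1/2} = u$ described above, uniformly in a one-sided neighborhood of $u = 0$. This is handled by combining the a priori continuity of $\omega$ up to the real axis with a local uniqueness argument (a Rouch\'e-type count on a small contour around the relevant root), and by Cauchy estimates on a fixed disk to absorb the $(1+O(w))$ remainders into the stated error terms. Case (a) is simpler, since the analysis stays entirely within the upper half-plane and requires no branch decisions.
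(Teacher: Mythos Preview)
Your proposal is correct and rests on the same Biane subordination machinery as the paper, but you organize the local computation differently, and the differences are worth noting. The paper inverts $w\mapsto w+\tau G_{\mu_0}(w)$ directly as a power series (case (a)) or as a Puiseux series after the substitution $u^2=w-x^*$ (case (b)), and then recovers the density via $\psi_\tau(s)=-\tfrac1\pi\Im G_{\mu_0}(F_\tau(s))$; in case (a) this requires tracking two imaginary contributions (from $\Im f_{2k}$ and $\Im g_{2k}$) that combine to give the constant $c_\tau^{2k+1}$. You instead use the identity $\omega(z)=z-\tau G_{\mu_\tau}(z)$ to get $\psi_\tau=\tfrac{1}{\pi\tau}\Im\omega$ directly, then split $G_{\mu_0}$ into its real-analytic part (absorbed into the biholomorphic coordinate change $\Phi$) and its singular part, and read off $\Im\omega$ from the boundary curve of $\Omega_\tau$ in the new coordinate. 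This buys you a cleaner accounting in case (a)---only one imaginary term to track---and a more geometric picture of why the exponent persists. In case (b), your continuity-plus-Rouch\'e argument for selecting the correct branch of $w-\alpha w^{2k+1/2}$ is correct in outline, though the paper's square-root substitution $u^2=w-x^*$ desingularizes the map entirely and lets one invoke the ordinary analytic inverse function theorem, which makes the branch selection automatic and the error bookkeeping more systematic. Your sketch would benefit from spelling out why, for $x>x_\tau^*$, the real root you find actually lies on $\partial\Omega_\tau$: this follows because the density vanishes on $(x^*,x^*+\epsilon)$, so $\int d\mu_0(s)/(\xi-s)^2$ is finite and close to $1/\tau_{\crit}<1/\tau$ for $\xi$ just to the right of $x^*$, forcing $y_\tau(\xi)=0$ there.
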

We call \eqref{eq:propsingularint} and \eqref{eq:propsingularedge} the propagation
of singular behavior under addition of a GUE matrix.
In the special case when the density $\psi_0$ is symmetric around $x^*$, the
propagation of a singular interior point was already 
noted by Biane  in \cite[Proposition 6]{Biane97}.
The exact order of vanishing at $x^*_{\tau}$ seems not to be in the literature. Part \ref{enu:thm:mainmacro_1} of Theorem \ref{thm:mainmacro} is proved in Section \ref{subsubsec:interior_subcrit} and part \ref{enu:thm:mainmacro_2} is proved in Section \ref{subsubsec:edge_sub}.
%

\subsection{Critical perturbations}

For $\tau=\tau_{\crit}$, the singular behavior does not persist. Several situations are possible, depending on the nature of the singular point $x^*$ and the limiting distribution $\mu_0$.

\begin{theorem}\label{thm:macrocritical}
  Suppose $\mu_0$ is a probability measure on $\mathbb R$ which satisfies Assumptions \ref{assumption:a} and \ref{assumption:b}. Let $\tau_{\crit}$ be given by \eqref{def:taucr}, and let $h$ be as in \eqref{eq:singularint} or \eqref{eq:singularedge}, depending on the case.   
  Then $\mu_{\tau_{\crit}} = \mu_0 \boxplus \lambda_{\tau_{\crit}}$ has a density $\psi_{\tau_{\crit}}$ with a singular point $x_{\tau_{\crit}}^*$ given by \eqref{def:xtau} with $\tau=\tau_{\crit}$. 
  \begin{enumerate}[label=(\alph*)]
  \item \label{enu:thm:macrocritical_I}
    Under Assumption \ref{assumption:b}\ref{enu:assumption:b_1}, if $x^*$ is a singular interior point with exponent $2k = 2$ (that is, $k = 1$), we have
\begin{equation} \label{eq:psilocalcritthmI}
  \psi_{\tau_{\crit}}(s) =
  \begin{cases} \ds
    \frac{\cos \frac{\theta}{2}}{\pi \tau_{\crit}^{3/2} c_0^{3/2}\sqrt{r}} 
    \lvert s-x^*_{\tau_{\crit}} \rvert^{1/2} (1 + O( \lvert s-x^*_{\tau_{\crit}} \rvert^{1/2})), & \text{as } s \to (x_{\tau_{\crit}}^*)_-, \\[15pt]
   \ds \frac{\sin \frac{\theta}{2}}{\pi \tau_{\crit}^{3/2} c_0^{3/2}\sqrt{r}}  
    \lvert s-x^*_{\tau_{\crit}} \rvert^{1/2} (1 + O( \lvert s-x^*_{\tau_{\crit}} \rvert^{1/2})), &\text{as } s \to (x_{\tau_{\crit}}^*)_+, 
  \end{cases}
\end{equation}
where $r>0$ and $0<\theta<\pi$ are given by ($\dashint$ stands for the Cauchy principal integral)
\begin{equation}\label{def:rtheta}
  r^2 =  \left( \dashint_{\supp(\mu_0)} \frac{h(s)}{x^*-s} ds\right)^2+\pi^2,\quad \theta = \arccot \left( \frac{1}{\pi} \dashint_{\supp(\mu_0)} \frac{h(s)}{x^*-s} ds \right) \in (0, \pi).
\end{equation}
\item \label{enu:thm:macrocritical_II}
  Under Assumption \ref{assumption:b}\ref{enu:assumption:b_1}, if $x^*$ is a singular interior point with exponent $2k$ and $k\geq 2$, we let 
  \begin{equation} \label{def:g}
    g_2 = \int \frac{d\mu_0(s)}{(s - x^*)^3} = c_0^{2k+1}\int_{\supp(\mu_0)}(s-x^*)^{2k-3}h(s)ds.
  \end{equation}
  If $g_2>0$, then 
\begin{equation} \label{eq:psilocalcritthmII}
  \psi_{\tau_{\crit}}(s) =
  \begin{cases}  
  \ds  \frac{c_0^{2k+1}}{2(\tau_{\crit}g_2)^{k+1/2}}\lvert s-x^*_{\tau_{\crit}} \rvert^{k-1/2} (1 + O( \lvert s-x^*_{\tau_{\crit}} \rvert^{1/2})), & 
\text{as } s \to (x_{\tau_{\crit}}^*)_-, \\[15pt]  
  \ds
    \frac{1}{\pi \tau_{\crit}^{3/2} g_2^{1/2}}\lvert s-x^*_{\tau_{\crit}} \rvert^{1/2} (1 + O( \lvert s-x^*_{\tau_{\crit}} \rvert^{1/2})), &\text{as } s \to (x_{\tau_{\crit}}^*)_+, 
  \end{cases}
\end{equation}
and if $g_2 < 0$, we have
\begin{equation} \label{eq:psilocalcritthmIIbis}
\psi_{\tau_{\crit}}(s) =
\begin{cases} \ds
\frac{1}{\pi \tau_{\crit}^{3/2} |g_2|^{1/2}}\lvert s-x^*_{\tau_{\crit}} \rvert^{1/2} (1 + O( \lvert s-x^*_{\tau_{\crit}} \rvert^{1/2})), & \text{as } s \to (x_{\tau_{\crit}}^*)_-, \\[15pt]
\ds  \frac{c_0^{2k+1}}{2(\tau_{\crit}|g_2|)^{k+1/2}}\lvert s-x^*_{\tau_{\crit}} \rvert^{k-1/2} (1 + O( \lvert s-x^*_{\tau_{\crit}} \rvert^{1/2})), & 
\text{as } s \to (x_{\tau_{\crit}}^*)_+.
\end{cases}
\end{equation}

\item \label{enu:thm:macrocritical_III}
  Under Assumption \ref{assumption:b}\ref{enu:assumption:b_1}, if $x^*$ is a singular interior point with exponent $2k$, $k\geq 2$, and $g_2$ given by \eqref{def:g} is equal to zero, we let
  \begin{equation} \label{def:tildeg}
    g_3 = \int \frac{d\mu_0(s)}{(s - x^*)^4} = c_0^{2k+1} \int_{\supp(\mu_0)} (s-x^*)^{2k-4}h(s)ds.
  \end{equation}
  Then $g_3 > 0$, and 
\begin{equation} \label{eq:psilocalcritthmIII}
  \psi_{\tau_{\crit}}(s) =
    \frac{\sqrt{3}}{2\pi \tau_{\crit}^{4/3} g_3^{1/3}}\lvert s-x^*_{\tau_{\crit}} \rvert^{1/3}(1+O(\lvert s-x^*_{\tau_{\crit}}\rvert^{1/3} )), \qquad 
   \text{as } s \to x_{\tau_{\crit}}^*.
\end{equation}
\item \label{enu:thm:macrocritical_IV}
  Under Assumption \ref{assumption:b}\ref{enu:assumption:b_2}, if $x^*$ is a singular right edge point with exponent $2k + 1/2$, we let
  \begin{equation} \label{eq:g_for_edge}
    g_2 = \int \frac{d\mu_0(s)}{(s - x^*)^3} = c^{2k + 3/2}_0 \int_{\supp(\mu_0)} \sgn(s - x^*) \lvert s - x^* \rvert^{2k - 5/2} h(s) ds, 
  \end{equation}
  where $\sgn(x) = 1, 0, -1$ depends on if $x$ is positive, zero, or negative. If $g_2 < 0$, then there exists $\epsilon > 0$ such that
\begin{equation} \label{eq:psilocalcritthmIV}
  \psi_{\tau_{\crit}}(s) =
  \begin{cases} \ds
    \frac{1}{\pi \tau_{\crit}^{3/2} \sqrt{|g_2|}}|s-x^*_{\tau_{\crit}}|^{1/2}  \left(1 + O(|s-x^*_{\tau_{\crit}}|^{\alpha})\right), & \text{as } s \to (x_{\tau_{\crit}}^*)_-, \\[15pt]
    \ds 
    0, & s \in [x_{\tau_{\crit}}^*, x_{\tau_{\crit}}^*+\epsilon],
  \end{cases}
\end{equation}
where $\alpha = 1/4$ if $k=1$ and $\alpha = 1/2$ otherwise. 
\item \label{enu:thm:macrocritical_V}
  Under Assumption \ref{assumption:b}\ref{enu:assumption:b_2}, if $x^*$ is a singular right edge point with exponent $2k + 1/2$, and $g_2$ defined in \eqref{eq:g_for_edge} is positive, then 
  \begin{equation}
    \psi_{\tau_{\crit}}(s) = 
  \begin{cases} 
  \ds \frac{c^{2k + 3/2}_0}{2(\tau_{\crit} g_2)^{k + 3/4}} |s-x^*_{\tau_{\crit}}|^{k - 1/4} 
  (1 + O( \lvert s - x_{\tau_{\crit}}^* \rvert^{1/4})), & \text{as } s \to (x_{\tau_{\crit}}^*)_-, \\[15pt]  
  \ds
    \frac{1}{\pi \tau_{\crit}^{3/2} \sqrt{g_2}}|s-x^*_{\tau_{\crit}}|^{1/2} (1 + O(|s-x_{\tau^*_{\crit}}|^{\alpha}), & \text{as } s \to (x_{\tau_{\crit}}^*)_+,
  \end{cases}
  \end{equation}
  where once again $\alpha = 1/4$ if $k=1$ and $\alpha = 1/2$ otherwise. 
\end{enumerate}	
\end{theorem}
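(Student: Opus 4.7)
The plan is to continue the subordination-function analysis already used for Theorem~\ref{thm:mainmacro}. Recall that $\mu_\tau = \mu_0 \boxplus \lambda_\tau$ has Cauchy transform $G_\tau(z) = G_{\mu_0}(\omega)$, where $\omega = \Omega_\tau(z)$ is determined by $z = H_\tau(\omega) := \omega + \tau G_{\mu_0}(\omega)$, and the density is recovered as $\psi_\tau(s) = -\pi^{-1}\Im G_{\mu_0}(\omega)\big|_{\omega = \Omega_\tau(s+i0^+)}$. Since $G_{\mu_0}'(x^*) = -\int d\mu_0(t)/(x^*-t)^2 = -1/\tau_{\crit}$, the defining relation \eqref{def:taucr} is precisely the condition $H_{\tau_{\crit}}'(x^*) = 0$, so $x^*$ is a critical point of $H_{\tau_{\crit}}$ mapped to $x^*_{\tau_{\crit}}$. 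All five cases then reduce to local inversion of $H_{\tau_{\crit}}$ about $x^*$: with $\xi = \omega - x^*$ and $\eta = s - x^*_{\tau_{\crit}}$, one expands $H_{\tau_{\crit}}(x^*+\xi) - x^*_{\tau_{\crit}}$ in a Puiseux-type series, solves for $\xi$ on the branch with $\Im\xi \geq 0$ selected by analytic continuation of $\Omega_\tau$ from the sub-critical regime, and reads off $\psi_{\tau_{\crit}}(s) = \Im\xi/(\pi\tau_{\crit}) + O(|\xi|^2)$, using that the leading imaginary contribution to $G_{\mu_0}(x^*+\xi)$ comes from the real coefficient $a_1 = -1/\tau_{\crit}$.

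For the interior case (Assumption~\ref{assumption:b}\ref{enu:assumption:b_1}), analyticity of $\psi_0$ at $x^*$ lets us extend $G_{\mu_0}$ from the upper half plane analytically through $x^*$, with Taylor coefficients $a_n = \alpha_n + i\beta_n$: $\beta_n = -\pi\psi_0^{(n)}(x^*)/n!$ vanishes for $n < 2k$ with $\beta_{2k} = -\pi c_0^{2k+1}$, while $\alpha_n = (-1)^n\dashint d\mu_0(t)/(x^*-t)^{n+1}$. So $a_2 = -g_2 + i\beta_2$. For case~\ref{enu:thm:macrocritical_I} ($k=1$), the identity $g_2 = -c_0^3\dashint h(s)/(x^*-s)\,ds$ combined with \eqref{def:rtheta} gives $H_{\tau_{\crit}}''(x^*)/2 = \tau_{\crit} a_2 = \tau_{\crit} c_0^3 r e^{-i\theta}$; inverting $\xi^2 \approx \eta e^{i\theta}/(\tau_{\crit} c_0^3 r)$ with the root having $\Im\xi>0$ yields \eqref{eq:psilocalcritthmI}, the factors $\sin(\theta/2)$ for $\eta>0$ and $\cos(\theta/2)=\sin((\theta+\pi)/2)$ for $\eta<0$ arising from the two different arguments of $\eta$. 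For case~\ref{enu:thm:macrocritical_II} ($k\geq 2$, $g_2\neq 0$), $H_{\tau_{\crit}}''(x^*) = -2\tau_{\crit} g_2$ is real: on the side where $\xi^2 \approx -\eta/(\tau_{\crit} g_2) < 0$, $\xi$ is purely imaginary, giving the $|\eta|^{1/2}$ branch, while on the opposite side $\xi$ is real to leading order and the imaginary correction is forced by the first coefficient with nonzero $\beta$, namely $\tau_{\crit} i\beta_{2k}\xi^{2k}$; balancing $\Im\eta = 0$ to order $\xi^{2k-1}$ produces $\Im\xi \propto |\xi|^{2k-1} \propto |\eta|^{k-1/2}$, matching \eqref{eq:psilocalcritthmII}--\eqref{eq:psilocalcritthmIIbis}. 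For case~\ref{enu:thm:macrocritical_III} ($g_2 = 0$), the cubic coefficient $H_{\tau_{\crit}}'''(x^*)/6 = -\tau_{\crit} g_3$ is nonzero (as $g_3 > 0$ by positivity), and inverting $\eta \approx -\tau_{\crit} g_3 \xi^3$ via the upper-half-plane cube root gives $\Im\xi = (\sqrt{3}/2)|\eta/(\tau_{\crit} g_3)|^{1/3}$ on both sides, yielding \eqref{eq:psilocalcritthmIII}.

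For the edge case (Assumption~\ref{assumption:b}\ref{enu:assumption:b_2}), $G_{\mu_0}$ admits near $x^*$ the decomposition
\[
G_{\mu_0}(z) = A(z) - \pi c_0^{2k+3/2} h(z)(z-x^*)^{2k+1/2},
\]
with $A$ analytic at $x^*$ and $(z-x^*)^{1/2}$ the principal branch (positive for $z>x^*$, cut along $(-\infty,x^*]$); uniqueness follows from matching the jump $-2\pi i\psi_0$ across $[x^*-\epsilon,x^*]$ and the analyticity of $G_{\mu_0}$ on $[x^*, x^*+\epsilon]$. The critical condition gives $A'(x^*) = -1/\tau_{\crit}$, and since $G_{\mu_0}$ extends smoothly through $x^*$ from the right where the fractional term vanishes, $A''(x^*) = G_{\mu_0}''(x^*) = -2g_2$, so
\[
H_{\tau_{\crit}}(x^*+\xi) - x^*_{\tau_{\crit}} = -\tau_{\crit} g_2 \xi^2 - \pi\tau_{\crit} c_0^{2k+3/2}\xi^{2k+1/2}(1+O(\xi)) + O(\xi^3).
\]
In case~\ref{enu:thm:macrocritical_IV} ($g_2 < 0$) the quadratic coefficient is positive: for $\eta > 0$ a real positive $\xi$ places $\omega$ on the analytic side of $G_{\mu_0}$, forcing $\psi_{\tau_{\crit}}(s) = 0$ on $[x^*_{\tau_{\crit}}, x^*_{\tau_{\crit}}+\epsilon]$; for $\eta < 0$, $\xi \approx i\sqrt{|\eta|/(\tau_{\crit}|g_2|)}$ produces \eqref{eq:psilocalcritthmIV}. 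In case~\ref{enu:thm:macrocritical_V} ($g_2 > 0$) the quadratic is negative: for $\eta > 0$, $\xi \approx i\sqrt{\eta/(\tau_{\crit} g_2)}$ gives the $\eta^{1/2}$ density; for $\eta < 0$ the quadratic has two real roots, and the branch selected by continuity of $\Omega_{\tau_{\crit}}$ from $\tau < \tau_{\crit}$ is $\xi_0 = -\sqrt{|\eta|/(\tau_{\crit} g_2)}$, which sits on the branch cut of $(z-x^*)^{2k+1/2}$. Writing $\xi = \xi_0 + i\eta_1$ with $\arg\xi \to \pi^-$, the branch yields $\xi^{2k+1/2} \approx i|\xi_0|^{2k+1/2}$, and balancing $\Im H_{\tau_{\crit}}(x^*+\xi) = 0$ forces $\eta_1 \approx \pi c_0^{2k+3/2}|\xi_0|^{2k-1/2}/(2g_2)$, giving the $|\eta|^{k-1/4}$ behavior. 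In both (d) and (e) the error exponent $\alpha$ reflects which of the two subleading terms is larger: the analytic $\xi^3$ dominates ($\alpha = 1/2$) when $k \geq 2$, the fractional $\xi^{5/2}$ dominates ($\alpha = 1/4$) when $k = 1$.

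The main technical obstacle is the correct branch selection for $\Omega_{\tau_{\crit}}(s+i0^+)$, most delicate in case~\ref{enu:thm:macrocritical_V}, where both real roots of the leading quadratic are algebraic candidates and one must single out the one corresponding to the upper-half-plane inverse of $H_{\tau_{\crit}}$. This is resolved by combining three ingredients: (i) the general fact that subordination functions map the upper half plane into itself, (ii) the sub-critical correspondence established in Theorem~\ref{thm:mainmacro}, and (iii) continuity of $\Omega_\tau$ in $\tau$ across $\tau_{\crit}$, which uniquely dictates the branch on each side of $x^*_{\tau_{\crit}}$. Once the branches are fixed, upgrading the leading-order inversions above to full asymptotic expansions with the claimed error exponents is a routine application of the implicit function theorem for Puiseux series, matching the dominant correction identified in each case to the stated $O(|s-x^*_{\tau_{\crit}}|^{\alpha})$ remainders.
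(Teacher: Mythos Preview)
Your proposal is correct and follows essentially the same approach as the paper: both invert $H_{\tau_{\crit}}(w)=w+\tau_{\crit}G_{\mu_0}(w)$ locally at the degenerate critical point $x^*$ via Puiseux series and read off $\psi_{\tau_{\crit}}$ from the imaginary part of $G_{\mu_0}$ composed with the inverse. The only tactical differences are that the paper handles the edge cases by first substituting $u=\sqrt{w-x^*}$ (reducing the Puiseux inversion to an ordinary one for $\tilde G_\tau(u)$) and fixes the branch by matching the local inverse $\varphi_j$ against Biane's global conformal map $F_{\tau_{\crit}}$ on their overlap in $\mathbb C^+$, whereas you work directly with the fractional-power decomposition of $G_{\mu_0}$ and select the branch via continuity of $\Omega_\tau$ in $\tau$; both routes are valid and yield the same constants.
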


The proof of parts \ref{enu:thm:macrocritical_I}, \ref{enu:thm:macrocritical_II}, and \ref{enu:thm:macrocritical_III} of Theorem \ref{thm:macrocritical} is given
in Section \ref{subsubsec:interior_crit} and the proof of parts  \ref{enu:thm:macrocritical_IV} and  \ref{enu:thm:macrocritical_V} is
in Section \ref{subsubsec:edge_crit}.

\begin{remark} \label{rmk:macrocritical}
  \begin{enumerate}[label=(\roman*)]
  \item 
    In part \ref{enu:thm:macrocritical_I} of the above theorem, if $\dashint_{\supp(\mu_0)} h(s)/(x^* - s) ds = 0$, then $\theta = \pi/2$ and the constants in front of the
    two square roots in \eqref{eq:psilocalcritthmI} are the same. This happens
    if $h(s)$ is symmetric about $s=x^*$. 
  \item 
    If $k \geq 2$ and $h(s)$ is symmetric about $s=x^*$, then $g_2 = 0$ and part  \ref{enu:thm:macrocritical_II} does not occur. 
  \item 
    If $x^*$ is the rightmost point of the support of $\mu_0$, then $g_2 < 0$
    where $g_2$ is defined by \eqref{eq:g_for_edge}, and we are in part  \ref{enu:thm:macrocritical_IV}. Furthermore, by Lemma \ref{lem:right_most_x^*} below, 
    we have $\psi_{\tau_{\crit}}(s) = 0$ for every $s > x^*_{\tau_{\crit}}$ 
    in \eqref{eq:psilocalcritthmIV}.
  \item \label{enu:rmk:macrocritical_4}
    Part \ref{enu:thm:macrocritical_V} can only happen if
    $x^*$ is a right endpoint of an interval in the support, 
    but there are other intervals to the right of $x^*$ as well, since in such 
    a situation it could happen that $g_2>0$. Then it could
    also happen that $g_2 = 0$, but this situation is more delicate and we do not investigate it here. 
  \end{enumerate}
\end{remark}

\subsection{Unitary ensembles}

For the next main result, concerning the local behavior of the eigenvalues of $X=M+\sqrt{\tau} H$, we restrict to matrices $M$ from
the unitary ensemble \eqref{def:UE} with $V$ such that $\frac{V(x)}{\log(1+|x|)} \to +\infty$ as $x \to \pm \infty$, and with $V$ real analytic on $\mathbb R$. It is a basic fact from random matrix theory, see e.g.~\cite{Deift99, Forrester10}, 
that the eigenvalues of $M$ have the joint probability density function
\begin{equation} \label{eq:UEpdf} 
  \frac{1}{\tilde{Z}_n} \Delta_n(x)^2 \prod_{j=1}^n e^{-n V(x_j)}, \qquad \Delta_n(x) = \prod_{1 \leq i < j \leq n} (x_j-x_i).
\end{equation} 
This is a determinantal point process with correlation kernel 
\begin{equation} \label{def:KnM} 
	K_n^M(x,y) = e^{-\frac{n}{2} (V(x) +  V(y))} \sum_{j=0}^{n-1} p_{j,n}(x) p_{j,n}(y),
	\end{equation}
where $p_{j,n}$, $j=0,1,2, \ldots$, are the orthonormal polynomials for the weight $e^{-nV(x)}$ on the real line, i.e., $p_{j,n}$ has degree $j$, positive leading coefficient, and 
\begin{equation} \label{eq:defn_p_jn}
  \int_{-\infty}^{\infty} p_{j,n}(x) p_{k,n}(x) e^{-n V(x)} dx = \delta_{j,k}.
\end{equation}

The limiting eigenvalue distribution $\mu_0$ exists almost surely \cite{Anderson-Guionnet-Zeitouni10} and is known as the equilibrium measure. It is a compactly supported Borel probability measure that minimizes the logarithmic energy in the external field $V$,
\begin{equation} \label{eq:logenergy} 
	\iint \log \frac{1}{|s-t|} d\mu(s) d\mu(t) + \int V(t) d\mu(t) ,
	\end{equation}
among all Borel probability measures on $\mathbb R$.

Since $V$ is real analytic, the equilibrium measure is supported on a finite
union of intervals, and the density $\psi_0$ has the form
\[ \psi_0(s) =  \frac{1}{\pi} \sqrt{q_V^-(s)} ds, \]
where $q_V : \mathbb R \to \mathbb R$ is a real analytic function, and $q_V^-$ denotes the negative part of $q_V$, see \cite{Deift-Kriecherbauer-McLaughlin98}. If $V$ is a polynomial
then $q_V$ is a polynomial as well. From this form of the density it follows that the density
is real analytic on the interior of each interval in the support. Generically, $q_V$ does not vanish in the interior of each interval, and it has a simple zero
at each of the endpoints of the intervals which leads to square root vanishing of the 
density, see \cite{Kuijlaars-McLaughlin00}.
However, for certain special potentials $V$ there can be a zero at an interior point, or a  higher order zero at one of the endpoints. Near such singular interior points or singular edge points \cite{Deift-Kriecherbauer-McLaughlin-Venakides-Zhou99, Kuijlaars-McLaughlin00}, the equilibrium density takes the form \eqref{eq:singularint} or \eqref{eq:singularedge} for some positive integer $k$.

For what follows it is convenient to express the quantities
$\tau_{\crit}$ and $x_{\tau}^*$, see \eqref{def:taucr} and \eqref{def:xtau}, in terms of $V$. This can be done because of the following lemma, which we will prove in Section \ref{sec:prooflemma}.

\begin{lemma} \label{lemma12}
  Suppose $x^*$ is a singular interior point with exponent $2k$ or a singular edge point with exponent $2k+1/2$.
Then for every $l=1,2, \ldots, 2k$, we have
\begin{equation} \label{eq:Vderivative} 
  V^{(l)}(x^*) = -2 (l-1)! \int \frac{d\mu_0(s)}{(s-x^*)^l}.
\end{equation}
\end{lemma}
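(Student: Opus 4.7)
I will start from the Euler--Lagrange equation for the equilibrium measure: since $\mu_0$ minimises the logarithmic energy \eqref{eq:logenergy}, the variational identity, differentiated on the interior of $\supp(\mu_0)$, gives
\[
V'(s)=2\,\mathrm{p.v.}\!\int\!\frac{d\mu_0(t)}{s-t}=G_+(s)+G_-(s),\qquad G(z):=\int\frac{d\mu_0(t)}{z-t}.
\]
Introduce $F(z):=G(z)-\tfrac12 V'(z)$, analytic on $\mathbb C\setminus\supp(\mu_0)$. Combining Sokhotski--Plemelj with the displayed identity yields the boundary values $F_\pm(s)=\mp i\pi\psi_0(s)$ on the interior of the support. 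The strategy is to prove that $F^{(l-1)}(z)\to 0$ as $z\to x^*$ from the upper half-plane for every $l=1,\ldots,2k$, and then to match this with the direct differentiation of the definition of $F$.

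The two cases of Assumption \ref{assumption:b} are treated differently at this key step. For a singular \emph{interior} point, $\psi_0(s)=c_0^{2k+1}(s-x^*)^{2k}h(s)$ is real-analytic on a neighbourhood of $x^*$, so the Schwarz-reflection combination $\widetilde F(z):=F(z)$ for $\Im z>0$ and $\widetilde F(z):=-F(z)$ for $\Im z<0$ has matching continuous boundary values $-i\pi\psi_0(s)$ from above and below, and therefore extends analytically across $x^*$. The boundary formula shows that $\widetilde F$ has a zero of order $2k$ at $x^*$, so $\widetilde F^{(l-1)}(x^*)=0$ for $l\le 2k$. For a singular \emph{edge} point, $F$ itself cannot be reflected because its boundary value carries a half-integer power $(x^*-s)^{2k+1/2}$; instead, I will work with $F^2$, whose boundary values from above and below both equal $-\pi^2 c_0^{4k+3}(x^*-s)^{4k+1}h(s)^2$ on the support, while $F^2$ is already analytic on $(x^*,x^*+\epsilon)$. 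The right-hand side is real-analytic and vanishes to order $4k+1$ at $x^*$, so $F^2$ extends analytically through $x^*$ with a zero of order $4k+1$. It follows that $F(z)=O((x^*-z)^{(4k+1)/2})$ in the upper half-plane, and hence $F^{(l-1)}(z)=O((x^*-z)^{(4k+3-2l)/2})\to 0$ for $l\le 2k+1$.

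To finish, compute directly for $\Im z>0$ that
\[
F^{(l-1)}(z)=(-1)^{l-1}(l-1)!\int\frac{d\mu_0(t)}{(z-t)^l}-\tfrac12 V^{(l)}(z).
\]
The higher-order vanishing of $\psi_0$ at $x^*$ (to order $2k$ in case \ref{enu:assumption:b_1} or $2k+1/2$ in case \ref{enu:assumption:b_2}) guarantees that $\int d\mu_0(t)/(t-x^*)^l$ converges absolutely for every $l\le 2k$. Since $|x^*+i\varepsilon-t|\ge|x^*-t|$, dominated convergence lets me pass the limit $z\to x^*$ through the integral, yielding $\int d\mu_0(t)/(z-t)^l\to\int d\mu_0(t)/(x^*-t)^l$. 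Combining with $F^{(l-1)}(z)\to 0$ and rewriting $(x^*-t)^l=(-1)^l(t-x^*)^l$ gives the asserted formula \eqref{eq:Vderivative}.

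The main obstacle is the edge case: the half-integer exponent in $\psi_0$ makes $F$ genuinely multivalued at $x^*$, so a direct Schwarz reflection on $F$ is unavailable. Passing to $F^2$ absorbs the branch and produces a single-valued analytic function whose zero order at $x^*$ can be read off from the density; everything else is a routine complex-analytic computation plus dominated convergence.
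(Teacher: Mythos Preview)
Your proof is correct. It shares with the paper's proof the starting point (the Euler--Lagrange identity $V'(s)=G_+(s)+G_-(s)$ on the support) and the finishing step (dominated convergence on the integral $\int d\mu_0(t)/(z-t)^l$ as $z\to x^*$ along $x^*+i\varepsilon$). The difference lies in how the crucial vanishing $F^{(l-1)}(x^*)=0$ is obtained.

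The paper proceeds more directly and treats the two cases uniformly: from $g_+ + g_- = V + \ell$ on the support it gets $g_+^{(l)}(s)+g_-^{(l)}(s)=V^{(l)}(s)$ there, and from $g^{(l)}(z)=-(l-1)!\int d\mu_0(s)/(s-z)^l$ together with the high-order vanishing of $\psi_0$ it reads off (by dominated convergence) that $g_+^{(l)}(x^*)=g_-^{(l)}(x^*)$ equals the real integral $-(l-1)!\int d\mu_0(s)/(s-x^*)^l$. Combining these two facts gives \eqref{eq:Vderivative} immediately. No reflection or squaring is needed, and no distinction between the interior and edge cases arises.

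Your Schwarz-reflection argument for the interior case and your $F^2$ trick for the edge case are valid and actually prove more than the lemma requires: they give the exact vanishing order of $F$ at $x^*$ (namely $2k$, resp.\ $2k+\tfrac12$), whereas the paper only needs that the first $2k-1$ derivatives of $F$ vanish there. The half-integer exponent that you identify as the main obstacle is in fact not an obstacle for the paper's argument, since the coincidence of $g_+^{(l)}(x^*)$ and $g_-^{(l)}(x^*)$ follows from the realness of the limiting integral alone, with no need to continue $F$ analytically through $x^*$.
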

In particular we find from Lemma \ref{lemma12} with $l = 1$ and $l=2$, and \eqref{def:xtau}, \eqref{def:taucr} that
\begin{align}
  x_{\tau}^* = {}& x^* + \frac{\tau}{2} V'(x^*), \label{eq:xtauV}  \\
  \tau_{\crit} = {}& - \frac{2}{V''(x^*)}. \label{eq:taucrV} 	
\end{align}

At non-singular points, the correlation kernel \eqref{def:KnM} has the usual
scaling limits from random matrix theory, namely the sine kernel at non-singular interior points and the Airy kernel at non-singular edge points, see \cite{Deift-Kriecherbauer-McLaughlin-Venakides-Zhou99}. 
The correlation kernel has non-trivial scaling limits at singular points.
We
let
\begin{equation} \label{def:gamma}
  \gamma = (\kappa + 1)^{-1},
\end{equation}
with $\kappa$ the exponent of the singular point,
\begin{equation}\label{def:kappa}
\kappa=\begin{cases}2k&\mbox{in case of a singular interior point},\\
2k+1/2&\mbox{in case of a singular edge point.}
\end{cases}
\end{equation}
Then the scaling limit takes the form
\begin{equation} \label{def:singlimit} 
  \lim_{n \to \infty} \frac{1}{c_0 n^{\gamma}}
  K_n^M\left(x^* + \frac{u}{c_0 n^{\gamma}}, x^* + \frac{v}{c_0 n^{\gamma}} \right)
  = \K_{\kappa}(u,v)
\end{equation}
with a universal limiting kernel that only depends on the exponent $\kappa$, and with $c_0$ as in \eqref{eq:singularint} and \eqref{eq:singularedge}.	
For a singular interior point \eqref{eq:singularint} with exponent $\kappa = 2$, the kernel is related to the Hastings--McLeod solution of the Painlev\'e II equation, see \cite{Bleher-Its03, Claeys-Kuijlaars06}, and in the case of a singular edge point \eqref{eq:singularedge} with $\kappa =  5/2$, the limiting kernel is related to a special solution of the second member of the Painlev\'e I hierarchy \cite{Claeys-Vanlessen07}. 

For $k\geq 2$ it seems that the nature of the limit \eqref{def:singlimit} has not been analyzed rigorously in the mathematical literature, but there is strong evidence in the physical literature that in the case of an interior critical point with $\kappa = 2k$ with $k \geq 2$ the limiting kernel is related to a member of the Painlev\'e II hierarchy \cite{Bleher-Eynard03}, and in the case of an edge critical point with $\kappa = 2k+1/2$ with $k \geq 2$ it is related to a higher order member of the Painlev\'e I hierarchy~\cite{Bowick-Brezin91, Brezin-Marinari-Parisi90}, see also \cite{Claeys-Its-Krasovsky10}.
For general $k$, existence of a kernel $\K_{\kappa}$ such that \eqref{def:singlimit} holds can be proved in a rather straightforward way using results from \cite{Deift-Kriecherbauer-McLaughlin-Venakides-Zhou99}. We will do this in Appendix \ref{sec:RHP_summary}, and we show moreover that \eqref{def:singlimit} holds not only for real values of $u$ and $v$, but uniformly for $u$ and $v$ in compact subsets of the complex plane. This fact will be important later on for the proof of our main results. Our proof of \eqref{def:singlimit} does not reveal any integrable property of $\K_{\kappa}$.
\begin{proposition} \label{prop:prop17}
  For $u$ and $v$ in compact subsets of $\mathbb C$, \eqref{def:singlimit} holds uniformly for some limiting kernel $\K_{\kappa}(u,v)$, analytic in $u$ and $v$, that only depends on the parameter $\kappa$, and with $c_0$ as in \eqref{eq:singularint} and \eqref{eq:singularedge}.	
\end{proposition}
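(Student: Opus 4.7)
The plan is to deduce the proposition from the Deift--Zhou steepest descent analysis of the $2\times 2$ Fokas--Its--Kitaev Riemann--Hilbert problem (RHP) characterizing the orthonormal polynomials $p_{j,n}$, following the framework of \cite{Deift-Kriecherbauer-McLaughlin-Venakides-Zhou99}. First I would recall the Christoffel--Darboux representation
\begin{equation}
K_n^M(x,y) = \frac{e^{-\frac{n}{2}(V(x)+V(y))}}{2\pi i (x-y)}\begin{pmatrix}0 & 1\end{pmatrix} Y_+(y)^{-1}Y_+(x)\begin{pmatrix}1 \\ 0\end{pmatrix},
\end{equation}
which reduces the problem to controlling $Y$ uniformly near $x^*$ and for $x,y$ in a complex neighborhood of that point.

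Next I would perform the three standard transformations $Y\mapsto T\mapsto S\mapsto R$, using the $g$-function built from the equilibrium measure $\mu_0$, and opening lenses around the bands of $\supp(\mu_0)$. The singular behavior \eqref{eq:singularint} or \eqref{eq:singularedge} forces the associated phase function $\phi$ to vanish at $x^*$ exactly to order $\kappa+1$, so the natural local coordinate is $\zeta=c_0 n^{\gamma}(z-x^*)$ with $\gamma=1/(\kappa+1)$. In a small disk $D(x^*,\delta)$ the jumps of $S$, when expressed in the variable $\zeta$, become $n$-independent on a fixed contour, which leads to a model RHP whose solution I will call $\Phi_\kappa(\zeta)$: it depends only on $\kappa$ and on the matching prescription with the outer parametrix at infinity. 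The local parametrix is then $P^{(x^*)}(z)=E(z)\Phi_\kappa(c_0 n^\gamma(z-x^*))$ with $E(z)$ an analytic, $n$-dependent prefactor designed so that $P^{(x^*)}(P^{(\infty)})^{-1}=I+O(n^{-\gamma})$ on $\partial D(x^*,\delta)$. The small-norm RHP for the remainder $R$ then yields $R(z)=I+O(n^{-\gamma})$ uniformly for $z$ outside $\partial D(x^*,\delta)$, with an analogous uniform bound inside the disk after composition with $P^{(x^*)}$.

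Unraveling $Y\mapsto T\mapsto S\mapsto R\cdot P^{(x^*)}$ and substituting $x=x^*+u/(c_0 n^\gamma)$, $y=x^*+v/(c_0 n^\gamma)$, the exponentials $e^{-\frac{n}{2}(V(x)+V(y))}$ are absorbed by the $g$-function conjugations, the lens openings contribute triangular factors that cancel in the Christoffel--Darboux combination, and the analytic prefactor $E$ drops out because it is the same on both sides of $(x-y)$ in the limit $u\to v$. In the limit $n\to\infty$ one is left with
\begin{equation}
\K_{\kappa}(u,v) = \frac{1}{2\pi i (u-v)}\begin{pmatrix}0 & 1\end{pmatrix}\Phi_\kappa(v)^{-1}\Phi_\kappa(u)\begin{pmatrix}1 \\ 0\end{pmatrix},
\end{equation}
which depends only on $\kappa$. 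Analyticity of $\K_{\kappa}(u,v)$ for $u,v\in\compC$ follows from the fact that the jump contours of $\Phi_\kappa$ can be freely deformed away from any prescribed pair of points without changing the above expression (the jump relations are upper/lower triangular and multiply out to the identity in the relevant entry); the apparent pole at $u=v$ is removable since $\Phi_\kappa(v)^{-1}\Phi_\kappa(u)\to I$. The uniform convergence on compact subsets of $\compC$ is immediate from the uniform small-norm estimate inside $D(x^*,\delta)$.

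The main obstacle is the construction and solvability of the model RHP for $\Phi_\kappa$ when $k\geq 2$. For $k=1$ in the interior case and $k=1$ at the edge the explicit parametrices (tied to the Hastings--McLeod Painlev\'e II solution and to the first Painlev\'e hierarchy solution of \cite{Claeys-Vanlessen07}) are already available in the literature and can be used verbatim. For general $k$ one needs only existence and uniform regularity of the model solution, not its Painlev\'e interpretation; this can be obtained abstractly by the vanishing lemma / Fredholm alternative argument standard in the RHP theory of \cite{Deift-Kriecherbauer-McLaughlin-Venakides-Zhou99}, with the matching conditions at infinity (dictated by the outer parametrix and the local behavior of $\phi$) playing the role of the normalization. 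Once existence of $\Phi_\kappa$ and of the prefactor $E$ with the required matching is established, the rest of the argument is a routine repetition of the steepest descent in the singular regime.
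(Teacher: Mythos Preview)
Your proposal is correct and follows essentially the same route as the paper: both invoke the Deift--Zhou analysis of \cite{Deift-Kriecherbauer-McLaughlin-Venakides-Zhou99} at the singular point, use the Christoffel--Darboux representation in terms of $Y$, rely on the abstractly constructed local parametrix (your $\Phi_\kappa$ is the paper's $\hat M_p$), obtain uniform error from the small-norm RHP for the remainder, and handle analyticity in $u,v$ by analytic continuation of the model solution across its jump contours. One minor point: in the edge case the analytic prefactor (your $E$, the paper's $L$) grows like $n^{\gamma/4}$, so its cancellation requires the combination $E(y)^{-1}R(y)^{-1}R(x)E(x)=I+O(n^{-\gamma/2})$ rather than simply ``$E$ is the same on both sides''; also, after the $g$-function conjugation the factor $e^{-\frac n2(V(x)+V(y))}$ leaves a residual $e^{\pm\pi i\gamma u^{\kappa+1}}$-type contribution that should be absorbed into the limiting kernel formula.
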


We will prove that the limiting kernel $\K_{\kappa}$ also appears as the scaling limit for the eigenvalues of $X = M + \sqrt{\tau} H$ whenever $\tau < \tau_{\crit}$, which we will explain next. 

\subsection{Determinantal structure and double integral formula}

First of all, it is known that the eigenvalues of $X_{\tau} = M + \sqrt{\tau} H$ are determinantal, see \cite{Brezin-Hikami96,Brezin-Hikami97,Johansson01a}.  It is a much more recent result \cite{Claeys-Kuijlaars-Wang15} that there is a double integral formula for the new eigenvalue correlation kernel in terms of the one for $M$,
namely
\begin{equation} \label{eq:KnX0}
  K_n^X(x,y; \tau) = \frac{n}{2\pi i \tau} \int_{- i \infty}^{+ i \infty} dz \int_{-\infty}^{\infty} dw\, K_n^M(z, w) e^{\frac{n}{2}(V(z) - V(w))} e^{\frac{n}{2\tau} ((z-x)^2 - (w-y)^2)}.
	\end{equation}
	
The result in \cite[Theorem 2.3]{Claeys-Kuijlaars-Wang15} was formulated in the more general context of random matrices $M$ whose eigenvalues are a polynomial ensemble, which is a joint probability density on $\mathbb R^n$  of the form
\[ \frac{1}{Z_n} \Delta_n(x) \det\left[ f_{k-1}(x_j) \right]_{j,k=1}^n, \]
for certain functions $f_0, \ldots, f_{n-1}$. This is a determinantal point
process with a kernel that can be written as
\[ K_n^{\PE}(x,y) = \sum_{j=0}^{n-1} p_j(x) q_j(y), \]
where each $p_j$ is a polynomial of degree $j$ and each $q_j$ belongs to the
linear span of $f_0, \ldots, f_{n-1}$. 

The eigenvalue distribution \eqref{eq:UEpdf} of a unitary ensemble can be put
in this form, and the polynomial kernel is related to the usual kernel \eqref{def:KnM}
by
\begin{equation} \label{def:KnPE} 
    K_n^{\PE}(x,y)  = K_n^M(x,y) e^{\frac{n}{2}(V(x) - V(y))}  = e^{-n V(y)} \sum_{j=0}^{n-1} p_{j,n}(x) p_{j,n}(y),
\end{equation}
and this combination appears in the integrand of \eqref{eq:KnX0}.
Note that $K_n^{\PE}(z,w)$ is polynomial in $z$, and so there is no
problem to evaluate it for $z$ on the imaginary axis, as we do in \eqref{eq:KnX0}.

By analyticity we can move the contour for the $z$-integral in \eqref{eq:KnX0} to any other vertical line in the complex plane. It will be convenient for us to take
the vertical line passing through the singular point $x^*$, and so we will work with
\begin{equation} \label{eq:KnX}
  K_n^X(x, y; \tau) = \frac{n}{2\pi i \tau} \int_{x^*- i \infty}^{x^* + i \infty} dz \int_{-\infty}^{\infty} dw\, K_n^M(z,w) e^{\frac{n}{2}(V(z) - V(w))} e^{\frac{n}{2\tau} ((z-x)^2 - (w-y)^2)}.
\end{equation}

\subsection{Propagation of local  scaling limit}

We are now ready to formulate our main result.
\begin{theorem} \label{thm:mainmicro}
Suppose that $V$ is real analytic such that $\frac{V(x)}{\log(1+|x|)} \to +\infty$ as $x \to \pm \infty$. Suppose that the equilibrium measure $\mu_0$ satisfies Assumptions \ref{assumption:a} and \ref{assumption:b} for some $\kappa=2k$ or $\kappa=2k+1/2$.
  Let $K^X_n(x, y; \tau)$ be the kernel \eqref{eq:KnX} for the eigenvalues of $X = X_{\tau}$ defined in \eqref{def:X}.
  Let $\tau_{\crit}$, $x_{\tau}^*$, $c_{\tau}$, and $\gamma$ be as in \eqref{eq:taucrV}, \eqref{eq:xtauV}, \eqref{def:ctau}, and \eqref{def:gamma}.
  Then, for every  $\tau \in (0, \tau_{\crit})$ and $n \in \mathbb N$, there is
  a function $H_{n} : \mathbb R \to \mathbb R$ such that
  \begin{align} \label{eq:Knlimit}
    \lim_{n \to \infty} 
    \frac{e^{-H_{n}(u; \tau)+H_{n}(v; \tau)}}{c_{\tau} n^{\gamma}}
    K^X_n \left(  x^*_{\tau} + \frac{u}{c_{\tau} n^{\gamma}},  x^*_{\tau} + \frac{v}{c_{\tau} n^{\gamma}}; \tau \right)
    = \K_{\kappa}(u,v),
  \end{align}	
  uniformly for $u$ and $v$ in compact subsets of $\mathbb R$,
  with the same limiting kernel  $\K_{\kappa}$ as in \eqref{def:singlimit}.
\end{theorem}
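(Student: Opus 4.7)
The plan is to perform a steepest descent / saddle point analysis of the double integral formula \eqref{eq:KnX}. After the substitution $x = x_{\tau}^* + u/(c_{\tau} n^{\gamma})$ and $y = x_{\tau}^* + v/(c_{\tau} n^{\gamma})$, writing $K_n^M(z,w)e^{\frac{n}{2}(V(z)-V(w))} = K_n^{\PE}(z,w)$, and combining the Gaussian factor $e^{\frac{n}{2\tau}((z-x)^2-(w-y)^2)}$ with the $g$-function asymptotics $p_{n,n}(z) \sim e^{ng(z)}$ of the orthonormal polynomials outside $\supp(\mu_0)$, the effective phases for the $z$- and $w$-integrations take the form
\begin{equation*}
\Phi_z(z) = g(z) + \frac{(z-x)^2}{2\tau}, \qquad \Phi_w(w) = g(w) - V(w) - \frac{(w-y)^2}{2\tau},
\end{equation*}
where $g(z) = \int \log(z-s)\,d\mu_0(s)$. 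The saddle equations $\Phi_z'(z) = 0$ and $\Phi_w'(w) = 0$ at $x = y = x_{\tau}^*$ both reduce, by Lemma \ref{lemma12} and \eqref{eq:xtauV}, to $z = w = x^*$. Linearising with $g''(x^*) = V''(x^*)/2 = -1/\tau_{\crit}$ gives the perturbed saddles $z_\star(x) = w_\star(x) = x^* + u/(c_0 n^{\gamma}) + O(n^{-2\gamma})$; the identity $c_0 = c_{\tau}(\tau_{\crit}-\tau)/\tau_{\crit}$ hidden in \eqref{def:ctau} is precisely what turns the shift of the saddle into the correct $1/(c_0 n^{\gamma})$.

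The saddles are non-degenerate, with $|\Phi_z''(z_\star)| = |\Phi_w''(w_\star)| = (\tau_{\crit} - \tau)/(\tau\tau_{\crit}) > 0$ for $\tau < \tau_{\crit}$, so the vertical $z$-contour through $z_\star(x)$ and the real $w$-axis through $w_\star(y)$ are the appropriate steepest descent directions. Near each saddle the Gaussian window has width $O(n^{-1/2})$, which, since $\gamma < 1/2$ (because $\kappa \geq 2$), is a factor $n^{\gamma-1/2} \to 0$ smaller than the microscopic kernel scale $1/(c_0 n^{\gamma})$. Throughout this window the rescaled variables $\hat z = c_0 n^{\gamma}(z-x^*)$ and $\hat w = c_0 n^{\gamma}(w-x^*)$ satisfy $\hat z = u + o(1)$ and $\hat w = v + o(1)$, so by Proposition \ref{prop:prop17} in its uniform-on-complex-compacts form,
\begin{equation*}
K_n^M(z,w) = c_0 n^{\gamma} \bigl(\K_{\kappa}(u,v) + o(1)\bigr)
\end{equation*}
uniformly in the saddle neighbourhood. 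The two Gaussian integrals each produce $\sqrt{2\pi\tau\tau_{\crit}/(n(\tau_{\crit} - \tau))}$, with an extra factor of $i$ from the vertical $z$-contour, and multiplying by the prefactor $n/(2\pi i\tau)$ gives $c_0 n^{\gamma} \cdot \tau_{\crit}/(\tau_{\crit} - \tau)\,\K_{\kappa}(u,v) = c_{\tau} n^{\gamma}\, \K_{\kappa}(u,v)$, dressed by the saddle phases $e^{n\Phi_z(z_\star(x))}$ and $e^{-n\Phi_w(w_\star(y))}$. Because these decouple into a $u$-only factor and a $v$-only factor, they assemble into a real-valued gauge $e^{H_n(u;\tau) - H_n(v;\tau)}$ which matches the prefactor in the statement.

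To turn this heuristic into a proof, one must show that the contributions from $z,w$ outside the saddle neighbourhoods are negligible, relative to $c_{\tau} n^{\gamma}$, after removing the gauge. Along the vertical $z$-contour, $\re\Phi_z(z) < \Phi_z(z_\star(x))$ away from the saddle follows from the positivity of the curvature together with the behaviour of $g$ in the two half-planes; this yields exponential suppression via standard Riemann--Hilbert asymptotics of $p_{j,n}$. On the real line, the growth $V(w)/\log(1+|w|)\to\infty$ and the decay of $p_{j,n}(w)e^{-nV(w)/2}$ off $\supp(\mu_0)$ provide the analogous suppression for $w$ away from $w_\star(y)$. These estimates follow the usual template for steepest descent analyses of integrals of this type.

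The main obstacle is the clean bookkeeping of the three scales that coexist in the $z$-integration: the microscopic kernel scale $1/(c_0 n^{\gamma})$ on which $\K_{\kappa}$ lives, the mesoscopic Gaussian width $1/\sqrt{n}$, and the macroscopic scale on which $g$ controls decay. The strict inequality $\gamma < 1/2$ is crucial: it pins the rescaled coordinate near $u$ throughout the Gaussian window, so the local kernel shape $\K_{\kappa}(u,v)$ passes through the Gaussian convolution unchanged, and only the uniform-on-compacts version of Proposition \ref{prop:prop17} is required rather than any refined asymptotic expansion. The most delicate technical step is the uniform control of the away-from-saddle tails on which the extraction of the leading term rests.
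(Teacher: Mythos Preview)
Your saddle-point strategy and the local analysis near $z=w=x^*$ are essentially what the paper does in Section~\ref{subsec:kernel_asy_loc}: separate out $K_n^M$, invoke Proposition~\ref{prop:prop17} on the $n^{-\gamma}$ scale, and observe that the Gaussian window is narrower than the kernel scale because $\gamma<1/2$. The identification of $c_\tau$ from the curvature ratio is also correct.

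The genuine gap is in the $w$-tail. You write that ``the decay of $p_{j,n}(w)e^{-nV(w)/2}$ off $\supp(\mu_0)$'' provides suppression, but this says nothing about $w\in\supp(\mu_0)$ away from $x^*$. There the weighted polynomials are \emph{oscillatory}, not small, and your phase $\Phi_w$ has constant real part along the whole support (by the Euler--Lagrange identity $2\Re g_+ = V+\ell$), so no real-axis steepest-descent argument can work. The paper's key extra idea, which your sketch is missing, is to rewrite $K_n^{\PE}(z,w)$ via \eqref{eq:KnPEinY} in terms of the \emph{second column} of $Y_n$ (the Cauchy transforms, not the polynomials). That representation is analytic in $w$ off the real line with good decay at infinity, so the $w$-contour can be pushed onto rays $\Gamma_j$ at angles $\pm\pi/6,\pm5\pi/6$ from $x^*$ (Section~\ref{subsec:kernel_rest}). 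On those rays one has a genuine descent estimate, namely Lemma~\ref{lem:est_J}, and a uniform bound $|[Y_n^{-1}(z)Y_n(w)]_{2,2}|\le Cn^{\gamma}|e^{n(g(z)-g(w))}|$ coming from the Riemann--Hilbert analysis (Lemma~\ref{lem:Y}). Without this contour deformation the oscillatory part of the $w$-integral on $\supp(\mu_0)$ is uncontrolled and the tail estimate fails.
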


\begin{remark}\label{remark: H} The gauge factor $e^{-H_n(u; \tau) + H_n(v; \tau)}$ in \eqref{eq:Knlimit} is irrelevant from a probabilistic
 point of view, since it does not contribute to the determinants that define
 the correlation functions of the determinantal point process.
 However, the precise formula of $H_n$ may be of interest, and it can be read off from the formula \eqref{eq:Hnhat} for $\widehat{H}_n$, which is derived
 in a multi-time context for nonintersecting Brownian paths, see below. We have
 \[ H_n(u;\tau) = \widehat{H}_n(u, t) - \frac{\tau n}{8} V'(x^*)^2, \qquad t = \frac{\tau}{1+\tau}. \] 
 Taking note that $\widehat{c}_t(1-t) = c_{\tau}$, see \eqref{eq:constants multi},
 we find from \eqref{eq:Hnhat} 
 \begin{equation} 
	 H_n(u; \tau) =  \frac{u n^{1-\gamma}}{2c_{\tau}} V'(x^*) + \frac{u^2 n^{1-2\gamma}}{4c_0 c_{\tau}} V''(x^*) + O\left(n^{1-3\gamma}\right) \label{eq:expansionHintro} \end{equation}
 as $n \to \infty$.  
 
 With some more effort we can expand the $O$-term and find an expansion
 with terms $a_j u^j n^{1-j \gamma}$ where the $a_j$ are constants that depend
 on $c_0$, $c_{\tau}$ and derivatives of $V$ at $x^*$. We only need terms with
 $1-j \gamma \geq 0$, i.e., $j \leq \gamma^{-1}$, since for $1-j \gamma < 0$, the term $n^{1-j \gamma}$ tends to $0$ as $n \to \infty$ and will not be important for
 the limit \eqref{eq:Knlimit}.   
 Since $\gamma = 1/(\kappa+1)$ and $\kappa =2k$ or $\kappa = 2k+1/2$, we 
 thus only need the terms up to $j = 2k+1$,
 which means that $H_n$ can be taken to be  a polynomial in $u$ of degree 
 $\leq 2k+1$.
 For example, for $\kappa = 2$  we could take
 \[ H_n(u; \tau) =  \frac{un^{2/3}}{2c_{\tau}} V'(x^*) + 
	 \frac{u^2 n^{1/3}}{4c_0 c_{\tau}} V''(x^*) + 	
	 \frac{u^3}{12 c_0^3} V'''(x^*). \]
\end{remark}

\subsection{Nonintersecting Brownian paths}\label{section:Brownian}

We now turn to nonintersecting Brownian paths with confluent ending points, which can be seen as a dynamical generalization of the eigenvalues of the  sum of a Hermitian matrix with a GUE matrix. 
Suppose $x_1(t), \ldots, x_n(t)$ are $n$ particles in independent Brownian motion with diffusion parameter $n^{-1/2}$, starting at $a_1< \cdots < a_n$ at time $0$, and ending at $b_1< \cdots < b_n$ at time $1$. 
We first condition the paths so that they do not intersect for $t\in[0,1]$, and then we take the confluent limit where the ending points $b_1,\ldots, b_n$ all tend to $0$. We take the starting points $x_1(0),\ldots, x_n(0)$
random, following a certain probability distribution. 
We denote the particles in this model of conditioned Brownian paths with random initial positions by $\X(t) = (x_1(t), \dotsc, x_n(t))$.

We assume that the initial distribution of the paths, $\frac{1}{n}\sum_{i=1}^n\delta_{x_i(0)}$, converges almost surely, weakly, to a probability distribution $\mu_0$ which has a singular interior point or a singular edge point $x^*$ satisfying Assumptions \ref{assumption:a} and \ref{assumption:b}. 

Since the joint probability distribution of $\X(t)$ is the same as the one of the eigenvalues of the random matrix $(1-t)X_\tau$ with $\tau=\frac{t}{1-t}$ (see Proposition \ref{prop:NIBM}), we can apply Theorem \ref{thm:mainmacro} and Theorem \ref{thm:macrocritical} to describe the limiting distribution of $\X(t)$ if $t\in[0, t_{\crit}]$, with $t_{\crit}$ given by
\begin{equation}\label{eq:defn_t_crit}
t_{\crit}=\frac{\tau_{\crit}}{1+\tau_{\crit}},
\end{equation}
with $\tau_{\crit}$ as in \eqref{def:taucr}.
This implies that the limiting distribution, as $n\to\infty$, of the particles $\X(t)$ at time $t\in(0,t_{\crit})$ still has a singular point of the same type as $\mu_0$.
More precisely, the limiting particle density of $(1-t)^{-1}\X(t)$ behaves as in  \eqref{eq:propsingularint} or \eqref{eq:propsingularedge} with $\tau=\frac{t}{1-t}$.
This is against the common intuition that the density of Brownian paths smooths out as time goes by. Moreover, the propagation of singularity does not go beyond $t_{\crit}$, and this implies a phase transition.

Now we assume that the initial particles $\X(0)$ have the joint probability density function \eqref{eq:UEpdf}, for some real analytic $V$ such that $\frac{V(x)}{\log(1+|x|)} \to +\infty$ as $x \to \pm \infty$. 
Here it is important to note that \eqref{eq:UEpdf} is the density of the particles $x_1(0),\ldots, x_n(0)$ after conditioning on the fact that the paths do not intersect.
Then the eigenvalue correlation kernel for the particles $(1-t)^{-1}\X(t)$ is given by \eqref{eq:KnX} with $\tau=\frac{t}{1-t}$, hence we can apply Theorem \ref{thm:mainmicro} for $t\in(0,t_{\crit})$, implying the propagation of the singular behavior of the particles $(1-t)^{-1}\X(t)$ on the local level.

From the nonintersecting condition, one can also derive that $\X(t)$ is a multi-time determinantal process on $\mathbb R\times [0,1]$. The $k$-point correlation function $R_k$ of this process evaluated at $k$ points $(x_i, t_i)$, $i=1,\ldots, k$ is given as the determinant
\begin{equation}\label{corr function det}
R_k\left((x_1,t_1),\ldots, (x_k, t_k)\right)
=\det\left(K_n^{\X}(x_i, x_j;t_i, t_j)\right)_{i,j=1}^k,
\end{equation} 
 where the multi-time correlation kernel $K_n^{\X}(x,y;t,t')$ is given by
  \begin{equation} \label{eq:mult_kernel}
  K^{\X}_n\left(x,y;t,t'\right) = -1_{t < t'} G_n(x,y; t, t') + \widetilde{K}^{\X}_n\left(x,y;t,t'\right),
\end{equation}
where
\begin{equation} \label{eq:Gnxy}
  G_n(x,y; t,t') = 
\frac{\sqrt{n}}{\sqrt{2\pi(t'-t)}} e^{-\frac{n(x - y)^2}{2(t'-t)} + \frac{nx^2}{2(1 - t)} - \frac{ny^2}{2(1 - t')}} \end{equation}
and
\begin{equation} \label{eq:mult_kernel_tilde}
  \widetilde{K}^{\X}_n\left(x,y;t,t'\right)= \frac{n}{2\pi i \sqrt{tt'}} \int^{x^* + i\infty}_{x^* - i\infty} dz \int^{\infty}_{-\infty} dw\, K^M_n(z, w) e^{\frac{n}{2}(V(z) - V(w))} 
  e^{\frac{n(x - (1 - t)z)^2}{2t (1 - t)}- 
  	\frac{n(y - (1 - t')w)^2}{2t'(1 - t')}}.
\end{equation}
Alternatively, using \eqref{def:KnPE} we can write $\widetilde{K}_n^{\X}$ as
\begin{equation} \label{eq:mult_kernel_tildePE}
  \widetilde{K}^{\X}_n\left(x,y;t,t'\right)= \frac{n}{2\pi i \sqrt{tt'}} \int^{x^* + i\infty}_{x^* - i\infty} dz \int^{\infty}_{-\infty} dw\, K^{\PE}_n(z, w) 
  e^{\frac{n(x - (1 - t)z)^2}{2t (1 - t)}- 
  	\frac{n(y - (1 - t')w)^2}{2t'(1 - t')}}.
\end{equation}

The expressions \eqref{eq:mult_kernel} and \eqref{eq:mult_kernel_tildePE} for the 
multi-time correlation kernel can be derived with the help of the Eynard--Mehta theorem 
\cite{Eynard-Mehta98, Borodin-Rains05}; they are proved in Appendix \ref{sec:corr_kernel_multi_time}. 
Notice that if $t=t'$, we recover the single-time correlation kernel \eqref{eq:KnX} up to the rescaling implied by \eqref{relation random matrix Brownian paths}. Indeed it is straightforward to see that
\begin{equation} \label{eq:single time equiv}
 \frac{1}{1+\tau}{K}^{\X}_n\left(\frac{x}{1+\tau},\frac{y}{1+\tau}; \frac{\tau}{1+\tau}, \frac{\tau}{1+\tau} \right) =  K_n^X\left(x, y; \tau \right),
 \end{equation}
 with $K_n^X$ given in \eqref{eq:KnX}.

In analogy to the constants $c_\tau$ and $x_\tau^*$, we define
  \begin{equation} \label{eq:constants multi}
    \widehat{c}_t = \frac{c_{\frac{t}{1-t}}}{1 - t}=\frac{t_{\crit}}{t_{\crit}-t}c_0, \qquad \widehat x_t^*= (1-t)x_{\frac{t}{1-t}}^*=(1-t)x^*+\frac{t}{2}V'(x^*),
  \end{equation}
  with $c_{\tau}$ and $x_\tau^*$ as in \eqref{def:ctau} and \eqref{eq:xtauV}, and $c_0$ as in Assumption \ref{assumption:b}.

We now present the multi-time version of Theorem \ref{thm:mainmicro}. 
\begin{theorem} \label{thm:multi}
  Suppose that $V$ is real analytic and such that $\frac{V(x)}{\log(1+|x|)} \to +\infty$ as $x \to \pm \infty$. Suppose that the equilibrium measure $\mu_0$ satisfies Assumptions \ref{assumption:a} and \ref{assumption:b}.
  Let $K^{\X}_n\left(x,y;t,t'\right)$ be the multi-time correlation kernel \eqref{eq:mult_kernel}--\eqref{eq:mult_kernel_tilde} for the particles in nonintersecting Brownian motions $\X(t)$, and let
  $\widetilde{K}_n^{\X}$ and $G_n$ be given by \eqref{eq:mult_kernel_tilde}
  and \eqref{eq:Gnxy}, respectively.
  Suppose $t, t' \in (0, t_{\crit})$. 
  Then there is a function $\widehat H_n(u; t)$ such that   the following hold.
  \begin{enumerate}[label=(\alph*)]
  \item \label{enu:thm:multi_a}
    For every $u, v \in \mathbb R$,
    \begin{equation} \label{eq:Knlimit_multi}  
      \lim_{n \to \infty}    
      \frac{e^{-\widehat H_{n}(u; t)+ \widehat H_{n}(v; t')}}{\sqrt{\widehat c_t \widehat c_{t'}} n^{\gamma}}
      \widetilde{K}^{\X}_n \left(\widehat x^*_t+ \frac{u}{\widehat c_{t} n^{\gamma}}, \widehat x^*_{t'}+ \frac{v}{\widehat c_{t'} n^{\gamma}};t, t' \right)  
      = \K_{\kappa}(u,v) 
    \end{equation}
    with the same limiting kernel  $\K_{\kappa}$ as in \eqref{def:singlimit}.
    The convergence is uniform for $u$ and $v$ in compact subsets of $\mathbb R$. 
  \item \label{enu:thm:multi_b}
    Suppose  $t < t'$ and $u \neq v$. Then  
    \begin{equation} \label{eq:Gnlimit_general} 
      \lim_{n \to \infty}    
      \frac{e^{-\widehat H_{n}(u; t)+ \widehat H_{n}(v; t')}}{\sqrt{\widehat c_t \widehat c_{t'}} n^{\gamma}}
      G_n \left(\widehat x^*_t+ \frac{u}{\widehat c_{t} n^{\gamma}}, \widehat x^*_{t'}+ \frac{v}{\widehat c_{t'} n^{\gamma}};t, t' \right)  
      = 0,
    \end{equation}
    and the convergence is uniform for $(u, v)$ in a compact subset of $\mathbb R^2$ away from the line $u = v$. Furthermore, we have that
    \begin{equation} \label{eq:Gnlimit1} 
      \lim_{n \to \infty}    
      \frac{e^{-\widehat H_{n}(u; t)+ \widehat H_{n}(v; t')}}{\sqrt{\widehat c_t \widehat c_{t'}} n^{\gamma}}
      G_n \left(\widehat x^*_t+ \frac{u}{\widehat c_{t} n^{\gamma}}, \widehat x^*_{t'}+ \frac{v}{\widehat c_{t'} n^{\gamma}};t, t' \right)  
      = \delta(u - v),
    \end{equation}
    with $\delta(u - v)$ being the Dirac $\delta$-function, see also
    Remark \ref{rem:deltafunction}.
  \end{enumerate}
\end{theorem}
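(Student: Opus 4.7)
The plan is to treat parts (a) and (b) separately. Part (a) requires a double-contour steepest-descent analysis of $\widetilde{K}_n^{\X}$ in \eqref{eq:mult_kernel_tildePE}, which parallels the proof of Theorem \ref{thm:mainmicro} and reduces to it via \eqref{eq:single time equiv} when $t = t'$. Part (b) is a direct asymptotic analysis of the explicit Gaussian kernel $G_n$ in \eqref{eq:Gnxy}.

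For part (a), I would set $x = \widehat{x}_t^* + u/(\widehat{c}_t n^{\gamma})$, $y = \widehat{x}_{t'}^* + v/(\widehat{c}_{t'} n^{\gamma})$ in \eqref{eq:mult_kernel_tildePE}, split the $(z,w)$ integration into a microscopic neighborhood of $(x^*, x^*)$ and its complement, and apply steepest descent. Inside the neighborhood, I rescale $z = x^* + u'/(c_0 n^{\gamma})$, $w = x^* + v'/(c_0 n^{\gamma})$, invoke Proposition \ref{prop:prop17} together with $K_n^{\PE}(z,w) = K_n^M(z,w) e^{\frac{n}{2}(V(z) - V(w))}$ to approximate $K_n^{\PE}$, and Taylor-expand the combined phase in $u' n^{-\gamma}$ and $v' n^{-\gamma}$; the coefficients $V^{(l)}(x^*)$ appearing in the expansion are evaluated by Lemma \ref{lemma12}. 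The outcome is that the $u'$-saddle (resp.\ $v'$-saddle) sits at $u'_\ast = u$ (resp.\ $v'_\ast = v$), with the shift generated by the cross-terms between the microscopic and macroscopic scales; the quadratic part at the saddle produces a Gaussian of vanishing width $O(n^{-(1-2\gamma)/2})$ whose integration localizes $\K_{\kappa}(u', v')$ at $(u, v)$; and the constant-in-$(u',v')$ polynomial-in-$(u,v)$ contributions, appearing at scales $n^{1-j\gamma}$ for $j = 0, 1, \ldots, 2k+1$, reassemble into the gauge factor $e^{\widehat{H}_n(u; t) - \widehat{H}_n(v; t')}$. The contribution from the complement of the neighborhood is exponentially suppressed by the usual equilibrium-measure variational inequality after steepest-descent deformation of the $z$-contour through $x^*$.

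For part (b), I substitute the same scaling directly into \eqref{eq:Gnxy}, using $\widehat{x}_{t'}^* - \widehat{x}_t^* = (t-t')(x^* - V'(x^*)/2)$ from \eqref{eq:constants multi} together with \eqref{eq:xtauV}--\eqref{eq:taucrV} to Taylor-expand the exponent. Its terms at scales $n$, $n^{1-\gamma}$, and $n^{1-2\gamma}$ are polynomial in $(u, v)$ of degrees $\le 2$ and are matched exactly by the same gauge $e^{-\widehat{H}_n(u; t) + \widehat{H}_n(v; t')}$ constructed in (a), leaving a rescaled Gaussian in $\widehat{c}_{t'} u - \widehat{c}_t v$ with exponent $-\frac{n^{1-2\gamma}}{2(t'-t)\widehat{c}_t \widehat{c}_{t'}}(\widehat{c}_{t'} u - \widehat{c}_t v)^2$, multiplied by a prefactor of order $n^{1/2 - \gamma}$. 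Since $\gamma \le 1/3 < 1/2$ for every $\kappa \ge 2$, this decays exponentially in $n$ uniformly on any compact subset of $\mathbb{R}^2$ avoiding the line $u = v$, proving \eqref{eq:Gnlimit_general}. The same expression is a Gaussian in $u-v$ of variance $\widehat{c}_t \widehat{c}_{t'}(t'-t)/n^{1-2\gamma} \to 0$ whose total mass tends to $1$, yielding the distributional limit $\delta(u-v)$ of \eqref{eq:Gnlimit1}.

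The principal obstacle is to verify that the gauge $\widehat{H}_n(u; t)$ produced by the saddle-point analysis in (a) is the same function that absorbs the divergent polynomial pieces of $G_n$ in (b), and that a single choice works simultaneously for all $t \in (0, t_{\crit})$, so that the two parts combine via \eqref{eq:mult_kernel} into a well-defined multi-time limit. This consistency is ultimately enforced by Lemma \ref{lemma12}, which expresses the derivatives $V^{(l)}(x^*)$ in $t$-independent terms of moments of $\mu_0$ that match the $t$-dependent quantities $\widehat{x}_t^*$ and $\widehat{c}_t$. A secondary technical point is upgrading the compactly-uniform convergence in Proposition \ref{prop:prop17} to a uniform bound with tail decay sufficient to justify the Laplace evaluation of the $(u', v')$ integrals; this can be extracted from the explicit local Riemann--Hilbert parametrix at the singular point.
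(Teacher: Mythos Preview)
Your outline of part (a) is essentially the paper's approach: split the double integral at $|z-x^*|, |w-x^*| \sim Rn^{-\gamma}$, localize the microscopic piece via Proposition \ref{prop:prop17} and a saddle-point analysis producing the gauge $\widehat H_n$, and bound the rest. One point is underspecified: the complement is \emph{not} controlled by the variational inequality alone. The $w$-integral in \eqref{eq:mult_kernel_tildePE} is along the real line where the integrand is oscillatory, and one must first deform it into $\mathbb C^\pm$ (a lens-opening move) using the alternative representation \eqref{eq:KnPEinY}, and then invoke a uniform bound of the shape $|[Y_n^{-1}(z)Y_n(w)]_{2,2}|\le Cn^{\gamma}|e^{n(g(z)-g(w))}|$ coming from the Riemann--Hilbert analysis (Lemma \ref{lem:Y}). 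The role of the equilibrium measure is to show that $\Re(g(z)-g_+(x^*))$ on the vertical line and $-\Re(g(w)-g_+(x^*))$ on the deformed contour combine with the Gaussian phases to give decay.

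Part (b) has a genuine gap when $\kappa>5/2$ (i.e.\ $k\ge2$). You correctly note in (a) that $\widehat H_n(u;t)$ carries contributions at scales $n^{1-j\gamma}$ for all $j\le 2k+1$; but in (b) you then assert that only the scales $n,\,n^{1-\gamma},\,n^{1-2\gamma}$ appear and are cancelled, ``leaving a rescaled Gaussian''. That is not what happens: the exponent of $G_n$ is quadratic and produces only those three scales, but applying the gauge from (a) \emph{introduces} the higher-order terms of $\widehat H_n$ as a residual. Concretely (see \eqref{eq:peaked_gaussian}), after the cancellation one is left with
\[
F_n(u,v;t,t')=\frac{\sqrt{n^{1-2\gamma}}}{\sqrt{2\pi}\,\sigma_{t,t'}}\,e^{-\frac{n^{1-2\gamma}}{2\sigma_{t,t'}^2}(u-v)^2}\,
e^{\,n^{1-3\gamma}\bigl(\widehat R_n(s_n(v,t'),v,t')-\widehat R_n(s_n(u,t),u,t)\bigr)}.
\]
For $\gamma=1/3$ or $2/7$ the last factor is bounded and your argument goes through. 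For $\gamma<1/3$ the exponent $n^{1-3\gamma}$ diverges, and on the Gaussian concentration scale $|u-v|\sim n^{-1/2+\gamma}$ a naive Lipschitz bound on the bracket gives a contribution of order $n^{(1-4\gamma)/2}$, which blows up once $\kappa\ge4$. The paper resolves this by recentering the Gaussian at a shifted point $v_n^*$ and proving (Lemma 5.1) the exact identity $s_n(v_n^*,t)=s_n(v,t')$, which forces the residual $Q_n$ to vanish to second order at $u=v_n^*$; only then does $n^{1-3\gamma}Q_n\to0$ on the concentration scale. Your sketch misses this mechanism. (A smaller point: the remaining quadratic is $(u-v)^2$, not $(\widehat c_{t'}u-\widehat c_t v)^2$; the simplification uses $V''(x^*)=-2/\tau_{\crit}$ and is where the $t$-dependence of $\widehat c_t$ is actually reconciled.)
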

The function $\widehat H_n$ is defined in \eqref{eq:Hnhat}.

By \eqref{eq:single time equiv}, the kernel \eqref{eq:KnX} can be recovered from the multi-time kernel \eqref{eq:mult_kernel} if we set $t=t'=\frac{\tau}{1+\tau}$. Using this relation, one checks directly that Theorem \ref{thm:mainmicro} is a consequence of the more general Theorem \ref{thm:multi}.
The proof of Theorem \ref{thm:multi}\ref{enu:thm:multi_a} is in Section \ref{section: proof main result (a)} and the proof of Theorem \ref{thm:multi}\ref{enu:thm:multi_b} is in Section \ref{section: proof main result (b)}.

\begin{remark} \label{rem:deltafunction}
	The convergence in \eqref{eq:Gnlimit1}
	is in the weak sense. That is, if $0 \leq t < t' < t_{\crit}$ and
	\begin{equation} \label{eq:Fnuv}
	F_n(u,v; t,t') = 
	\frac{e^{-\widehat H_{n}(u; t)+ \widehat H_{n}(v; t')}}{\sqrt{\widehat c_t \widehat c_{t'}} n^{\gamma}} G_n\left(\widehat x^*_t+ \frac{u}{\widehat c_{t} n^{\gamma}}, \widehat x^*_{t'}+ \frac{v}{\widehat c_{t'} n^{\gamma}}\right),
	\end{equation} 
	then 
	\begin{equation} \label{eq:Fnuvconverg}
          F_n(u,v; t, t') \to \delta(u-v)
        \end{equation}
	means that 
	\[ \lim_{n \to \infty} \iint F_n(u,v;t,t') \varphi(u,v) dudv =
		\int \varphi(u,u) du \]
	holds for every continuous test function $\varphi$ with compact support.
\end{remark}

\begin{remark}\label{remark: Gaussian increments}

By \eqref{eq:mult_kernel}, the two parts \ref{enu:thm:multi_a} and \ref{enu:thm:multi_b} of Theorem \ref{thm:multi} yield
\begin{equation} \label{eq:Knlimit_multi2}
\lim_{n \to \infty}    
\frac{e^{-\widehat H_{n}(u; t)+ \widehat H_{n}(v; t')}}{\sqrt{\widehat c_t \widehat c_{t'}} n^{\gamma}}
{K}^{\X}_n \left(\widehat x^*_t+ \frac{u}{\widehat c_{t} n^{\gamma}}, \widehat x^*_{t'}+ \frac{v}{\widehat c_{t'} n^{\gamma}};t, t' \right)  
= -1_{t<t'} \delta(u - v) + \K_{\kappa}(u,v).
\end{equation} 

  The limiting correlation kernel \eqref{eq:Knlimit_multi2} implies that the limiting distribution of particles around $\widehat{x}^*_t$, in the scale of $O(n^{-\gamma})$, does not change over the time $t \in [0, t_{\crit})$ after centered at $\widehat{x}^*_t$ and scaled by $\widehat{c}_t n^{\gamma}$. The infinity of the limiting correlation kernel at positions $u = v$ seems to be an anomaly, but it is natural and necessary. Consider particles initially at time $0$ distributed as a determinantal process with correlation kernel $\K_{\kappa}(u, v)$, and suppose they do not move at all over all positive time $t > 0$, then the particles are a multi-time determinantal process, and the multi-time correlation kernel is given exactly by the right-hand side of \eqref{eq:Knlimit_multi2}.
In our nonintersecting Brownian motion model, the particles are not stationary, but the fluctuation is at a scale smaller than $n^{-\gamma}$.
\end{remark}

\begin{remark} \label{remark: Gaussian scaling}
The Dirac $\delta$-function on the right-hand side of \eqref{eq:Gnlimit1} arises as a limit of Gaussian distributions which are sharply peaked on the $O(n^{-\gamma})$ scale. To detect these Gaussian fluctuations we can consider a finer scaling. Suppose now that $u$ is fixed and that $v$ depends on $n$ in such a way that the distance between $u$ and $v$ is of order $n^{-1/2+\gamma}$ as $n\to\infty$, say
\begin{equation}\label{def v x}
v=u+\frac{s}{n^{1/2-\gamma}}.
\end{equation}
We assume for simplicity that $\gamma>1/4$, which is the case if the exponent $\kappa$ of the singular point $x^*$ is smaller than $3$, see \eqref{def:gamma}.
Then we obtain a different scaling limit if $t <t'$, namely with a Gaussian limiting kernel,
\begin{equation} 
    \lim_{n \to \infty}    
         F_n \left(\widehat x^*_t+ \frac{u}{\widehat c_{t} n^{\gamma}}, \widehat x^*_{t'}+ \frac{v}{\widehat c_{t'} n^{\gamma}};t,t' \right)  
         =\frac{1}{\sqrt{2\pi} \sigma_{t,t'}}e^{-\frac{s^2}{2\sigma^2_{t,t'}}},
\label{eq:multiscalingGaussian}
  \end{equation}
  where the variance is given by
\begin{equation}
\label{def:sigmat} \sigma_{t,t'}^2=\widehat c_t\widehat c_{t'}(t'-t)=c_0^2t_{\crit}^2\frac{t'-t}{(t_{\crit}-t)(t_{\crit}-t')}.
\end{equation}
This follows from the proof of Theorem \ref{thm:multi}\ref{enu:thm:multi_b}, see \eqref{eq:peaked_gaussian}--\eqref{eq:Rnulimit} .
\end{remark}

\begin{remark}\label{remark: Gaussian increments2}
The probability distribution of having a path at position $y$ at time $t'>t$ given that there is a path at position $x$ at time $t$ is given in terms of the two-point correlation function $R_2$ by
\[\frac{R_2\left((x,t);(y,t')\right)}{K_n^{\X}\left(x,x;t,t\right)}dy
=\frac{1}{K_n^{\X}\left(x,x;t,t\right)}\det\begin{pmatrix}K_n^{\X}\left(x,x;t,t\right)&K_n^{\X}\left(x,y;t,t'\right)
\\K_n^{\X}\left(y,x;t',t\right)&K_n^{\X}\left(y,y;t',t'\right)\end{pmatrix}dy.\]
If we scale $x$ and $y$ by setting, as in Remark \ref{remark: Gaussian scaling},
\[x=\widehat x_t^*+\frac{u}{\widehat c_tn^\gamma},\qquad y=\widehat x_{t'}^*+\frac{u}{\widehat c_{t'}n^\gamma}+\frac{s}{\widehat c_{t'}n^{1/2}},\]
for some fixed $s\in\mathbb R$,
we can use  \eqref{eq:multiscalingGaussian} and \eqref{eq:Knlimit_multi}--\eqref{eq:Gnlimit_general} to conclude that, as $n\to\infty$, this distribution converges to
\[\frac{\widehat c_{t'} \sqrt{n}}{\sqrt{2\pi} \sigma_{t,t'}}e^{-\frac{s^2}{2\sigma^2_{t,t'}}}dy=\frac{1}{\sqrt{2\pi} \sigma_{t,t'}}e^{-\frac{s^2}{2\sigma^2_{t,t'}}}ds.\]

The heuristic interpretation of this fact is as follows: if there is a path at position $x = \widehat{x}^*_t + u/(\widehat{c}_t n^{\gamma})$ at time $t$, then it will be at position
\[y=\widehat x_{t'}^*+\frac{u}{\widehat c_{t'}n^\gamma}+ \frac{\sigma_{t,t'}}{\widehat c_{t'}n^{1/2}}\chi+o(n^{-1/2})=\widehat x_{t'}^*+\frac{u}{\widehat c_{t'}n^\gamma}+ \sqrt{\frac{(t'-t)(t_{\crit}-t')}{t_{\crit}-t}}\frac{1}{n^{1/2}}\chi+o(n^{-1/2})\]
at time $t'$ as $n\to\infty$ if $t<t'<t_{\crit}$,
where $\chi$ is a standard Gaussian random variable. This suggests that the path is approximated by a Brownian bridge \cite[Section 5.6 B]{Karatzas-Shreve91} between times $t$ and $t_{\crit}$, with the start $x$ and the end $\widehat{x}^*_{t_{\crit}}$, and diffusion parameter $n^{-1/2}$ that is the same as the diffusion parameter of an individual Brownian path in the model. The rigorous justification of the Brownian bridge approximation is out of the scope of the current paper.
\end{remark}

\section{Proofs of Theorem \ref{thm:mainmacro} and Theorem \ref{thm:macrocritical}}

\subsection{Free convolution with a semi-circle law}

In \cite{Biane97}, Biane describes how to calculate $\mu_{\tau} = \mu_0 \boxplus \lambda_{\tau}$ and its Cauchy transform,
\begin{equation} \label{def:Gtau} 
  G_{\mu_{\tau}}(z) = \int \frac{d\mu_{\tau}(s)}{z-s}, \qquad \Im z > 0, 
\end{equation}
from the knowledge of $G_{\mu_0}$. By \cite[Corollary 1]{Biane97}, we can extend the domain of $G_{\mu_{\tau}}(z)$ continuously to $\mathbb C^+ \cup \mathbb R$, and let
\begin{equation}
  G_{\mu_{\tau}}(x) = \lim_{y \to 0_+} G_{\mu_{\tau}}(x + iy).
\end{equation}
By the regularity of $\mu_{\tau}$ shown in \cite{Biane97}, particularly \cite[Corollaries 4 and 5]{Biane97}, the density of the measure $\mu_{\tau}$ can be recovered from its Cauchy transform by the formula
\begin{equation}
  \psi_{\tau}(x) = -\frac{1}{\pi} \Im G_{\mu_{\tau}}(x).
\end{equation}

For all $\tau > 0$, let
\begin{equation} \label{def:ytau} 
  y_{\tau}(x) = \inf \left\{ y > 0 \, \bigg| \, \int \frac{d\mu_0(s)}{(x-s)^2 + y^2} \leq \frac{1}{\tau} \right\}, 
  \qquad x \in \mathbb R.
\end{equation}
Then $x \mapsto y_{\tau}(x)$ is continuous \cite[Lemma 2]{Biane97}, and 
\begin{equation} \label{def:Omega} 
  \Omega_{\tau} = \{ x + iy \in \mathbb C^+ \mid y > y_{\tau}(x) \} 
\end{equation}
is the domain above the graph of $y_{\tau}$.
Biane \cite[Lemma 4]{Biane97} shows that $w \mapsto w + \tau G_{\mu_0}(w)$
is a conformal map from $\Omega_{\tau}$ onto $\mathbb C^+$ that extends continuously
to the closure. The boundary of $\Omega_{\tau}$ 
(which is the graph of $y_{\tau}$) is mapped bijectively to the
real axis. 
Furthermore, let $F_{\tau} : \overline{\mathbb C^+} \to \overline{\Omega}_{\tau} $ be the inverse mapping. Then by \cite[Proposition 2 and Corollary 2]{Biane97} \footnote{In \cite[Corollary 2]{Biane97}, a negative sign is missing for $p_t(x)$.}
\begin{equation} \label{eq:Gtau} 
  G_{\mu_{\tau}}(z) = G_{\mu_0}(F_{\tau}(z)) \quad \text{and} \quad  \psi_{\tau}(x) = - \frac{1}{\pi} \Im G_{\mu_0}(F_{\tau}(x)), \quad \text{for $x \in \mathbb R$}. 
\end{equation}

In the situation of a singular point specified in Assumption \ref{assumption:b}\ref{enu:assumption:b_1} or Assumption \ref{assumption:b}\ref{enu:assumption:b_2}, and if $0 < \tau \leq \tau_{\crit}$, it follows from \eqref{def:taucr} and \eqref{def:ytau}
that
\begin{equation}
  \int \frac{d\mu_0(s)}{(x^* - s)^2} \leq \frac{1}{\tau}.
\end{equation}
Thus $y_{\tau}(x^*) = 0$ and $x^*$ belongs to the boundary of $\Omega_{\tau}$. It is mapped by $w \mapsto w + \tau G_{\mu_0}(w)$ to
\begin{equation}
  x^* + \tau G_{\mu_0}(x^*) = x_{\tau}^*,
\end{equation}
see \eqref{def:xtau}. The inverse  $F_{\tau}$ maps $x_{\tau}^*$ back to $x^*$
and then it is clear by \eqref{eq:Gtau} that
\begin{equation} \label{eq:reality_of_G_mu_tau(x*_tau)}
  G_{\mu_{\tau}}(x^*_{\tau}) = G_{\mu_0}(x^*) = \int \frac{d\mu_0(s)}{x^*-s} \quad \text{is real.} 
\end{equation}
Thus by \eqref{eq:Gtau} the density of $\mu_{\tau}$ vanishes 
at $x_{\tau}^*$.

Under Assumption \ref{assumption:b}\ref{enu:assumption:b_2}, we have the following result.
\begin{lemma} \label{lem:right_most_x^*}
  If $x^*$ is the rightmost edge of the support of $\mu_0$, and $0 < \tau \leq \tau_{\crit}$, then $\psi_{\tau}(x) = 0$ for all $x > x^*_{\tau}$.
\end{lemma}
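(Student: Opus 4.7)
The plan is to use Biane's subordination machinery, and the key observation that if $x^*$ is the rightmost point of $\supp(\mu_0)$, then $G_{\mu_0}$ extends to a real-analytic function on $(x^*, \infty)$ and the integral $\int d\mu_0(s)/(x-s)^2$ is bounded by its value at $x=x^*$, namely $1/\tau_{\crit}$.

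First I would check that $(x^*, \infty) \subset \partial \Omega_\tau$. Indeed, for any $x > x^*$ and any $y \geq 0$, since $s \leq x^*$ for all $s \in \supp(\mu_0)$,
\begin{equation*}
\int \frac{d\mu_0(s)}{(x-s)^2 + y^2} \leq \int \frac{d\mu_0(s)}{(x^*-s)^2} = \frac{1}{\tau_{\crit}} \leq \frac{1}{\tau},
\end{equation*}
so by the definition \eqref{def:ytau} of $y_\tau$, we have $y_\tau(x) = 0$. Thus the real interval $[x^*,\infty)$ lies on the lower boundary of the subordination domain $\Omega_\tau$, and the conformal map $w\mapsto w+\tau G_{\mu_0}(w)$ extends continuously to it with real values, defining the real-analytic map $\phi(x) := x + \tau G_{\mu_0}(x)$.

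Next I would show $\phi$ is strictly increasing on $[x^*,\infty)$ and maps it bijectively onto $[x_\tau^*, \infty)$. Differentiating,
\begin{equation*}
\phi'(x) = 1 - \tau \int \frac{d\mu_0(s)}{(x-s)^2} \geq 1 - \frac{\tau}{\tau_{\crit}} \geq 0
\end{equation*}
with strict inequality for $x > x^*$ since the integral is strictly decreasing in $x$ there. Combined with $\phi(x^*) = x_\tau^*$ (by \eqref{def:xtau}) and $\phi(x)\to\infty$ as $x\to\infty$, this gives the claimed bijection.

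Finally, by the boundary correspondence of the conformal bijection $w\mapsto w+\tau G_{\mu_0}(w)$ from $\Omega_\tau$ onto $\mathbb C^+$, the inverse $F_\tau$ carries the real interval $(x^*_\tau,\infty)$ back onto $(x^*,\infty) \subset \mathbb R$. So for any $x > x^*_\tau$, $F_\tau(x)$ is real and lies in $(x^*,\infty)$, where $G_{\mu_0}$ is real. By \eqref{eq:Gtau}, $G_{\mu_\tau}(x) = G_{\mu_0}(F_\tau(x))\in\mathbb R$, hence $\psi_\tau(x) = -\tfrac{1}{\pi}\Im G_{\mu_\tau}(x) = 0$. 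There is no substantial obstacle: the argument is essentially a direct application of Biane's framework once one notices that the subordination graph $y_\tau$ vanishes on $(x^*,\infty)$.
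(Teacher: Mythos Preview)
Your proof is correct and follows essentially the same route as the paper's: both show that $y_\tau(x)=0$ on $[x^*,\infty)$, that the boundary map $w\mapsto w+\tau G_{\mu_0}(w)$ carries $[x^*,\infty)$ bijectively onto $[x^*_\tau,\infty)$, and then conclude via \eqref{eq:Gtau} that $G_{\mu_\tau}$ is real on $(x^*_\tau,\infty)$. The only minor difference is that you verify the bijectivity by computing $\phi'(x)$ directly, whereas the paper invokes Biane's general result that the boundary correspondence is a bijection.
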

\begin{proof}
  For $\tau \in (0, \tau_{\crit}]$,
  \begin{equation}
    \int \frac{d\mu_0(s)}{(x - s)^2} < \int \frac{d\mu_0(s)}{(x^* - s)^2} \leq \frac{1}{\tau}, \quad \text{for all $x > x^*$},
  \end{equation}
  which implies that $y_{\tau}(x) = 0$ for all $x > x^*$, and then the interval $[x^*, \infty)$ is part of the boundary of $\Omega_{\tau}$. By the bijectivity of $F_{\tau}$, we have that $F_{\tau}$ maps the interval $[x^*_{\tau}, \infty)$ bijectively and continuously to $[x^*, \infty)$. Then it is clear that by \eqref{eq:Gtau} and analogous to \eqref{eq:reality_of_G_mu_tau(x*_tau)},
  \begin{equation}
    G_{\mu_{\tau}}(x) = G_{\mu_0}(F_{\tau}(x)) = \int \frac{d\mu_0(s)}{F_{\tau}(x) - s} \quad \text{is real, for all $x > x^*_{\tau}$}.
  \end{equation}
  We conclude that $\psi_\tau(x)=0$ for all $x\geq x^*_{\tau}$.
\end{proof}

Below we establish the precise order of 
vanishing as given in \eqref{eq:propsingularint} and \eqref{eq:propsingularedge} for $0<\tau<\tau_{\crit}$, and their critical counterparts as $\tau=\tau_{\crit}$.

\subsection{Singular interior point} \label{subsec:interior_global}

\subsubsection{Preliminaries} \label{subsubsec:preliminaries_interior}

We assume that $x^*$ is a singular interior point as in Assumption \ref{assumption:b}\ref{enu:assumption:b_1} and use the fact that the density $\psi_0(s)$
is analytic near $s=x^*$. This implies that $G_{\mu_0}$ as given by 
\eqref{def:Gtau} with $\tau = 0$ has an analytic continuation into the
lower half-plane in a neighborhood of $x^*$, say for $|w-x^*| < \delta$ for some
$\delta > 0$. We denote the analytic continuation also
by $G_{\mu_0}$ and it is given by
\begin{equation} \label{eq:G_mu_0(z)_continuation}
  G_{\mu_0}(w) = \int \frac{d\mu_0(s)}{w-s} - 2\pi i \psi_0(w) \quad \text{for }
  \Im w < 0, \, |w - x^*| < \delta.
\end{equation}
Let 
\begin{equation} \label{eq:Gmu0expansion1}
G_{\mu_0}(w) = - \sum_{j=0}^{\infty} g_j (w-x^*)^j 
\end{equation}
be the power series expansion of $G_{\mu_0}$ around $x^*$. Since $\psi_0$ vanishes
to order $2k$ at $x^*$, the coefficients $g_0, \ldots, g_{2k-1}$ are real and
they are given by
\begin{equation} \label{defn_g_j_interior}
  g_j = \int \frac{d\mu_0(s)}{(s-x^*)^{j+1}} =  c_0^{2k+1}\int_{\supp(\mu_0)} (s-x^*)^{2k-j-1} h(s) ds, \quad j = 0, 1, \dotsc, 2k - 1.
\end{equation}
In particular 
\begin{equation} \label{def:g1} 
  g_0 = -G_{\mu_0}(x^*) = -\frac{x^*_{\tau} - x^*}{\tau}, 
  \qquad g_1 = \frac{1}{\tau_{\crit}},
\end{equation}
see \eqref{def:taucr} and \eqref{def:xtau}. The coefficient $g_{2k}$ has a non-zero
imaginary part and is given by 
\begin{equation} \label{eq:Img2k} 
  g_{2k} = \lim_{y \to 0_+} \int \frac{d\mu_0(s)}{(s-(x^* + iy))^{2k +1}} = c_0^{2k+1}\dashint_{\supp(\mu_0)} \frac{h(s)}{s-x^*} ds + c_0^{2k+1}\pi i.
\end{equation}
Using \eqref{def:xtau} and \eqref{def:g1}, and \eqref{eq:Gmu0expansion1} we have the 
power series  in a neighborhood of $x^*$
\begin{equation} \label{eq:wplustauGmu0} 
  w + \tau G_{\mu_0}(w) 
  =  x^*_{\tau} + \left(1 - \frac{\tau}{\tau_{\crit}}\right) (w-x^*) - \tau \sum^{\infty}_{j=2} g_j (w-x^*)^j. 
\end{equation}

\subsubsection{Proof of Theorem \ref{thm:mainmacro}\ref{enu:thm:mainmacro_1}} \label{subsubsec:interior_subcrit}

We compute the expansion of $F_{\tau}(z)$, which is the inverse function 
of $w + \tau G_{\mu_0}(w)$ as in \eqref{eq:wplustauGmu0}, 
under the condition $\tau < \tau_{\crit}$. Then $w + \tau G_{\mu_0}(w)$ is 
analytic around $x^*$ with a non-zero derivative at $x^*$, see \eqref{eq:wplustauGmu0},
and thus has an analytic inverse locally near $x^*_{\tau} = x^* + \tau G_{\mu_0}(x^*)$.
The domain of the local inverse function overlaps with that of $F_{\tau}$, 
and in the common domain they coincide due to their uniqueness. 

It follows that $F_{\tau}$ also has an analytic continuation from the upper half plane to a full neighborhood of $x_\tau^*$, and we consider $F_{\tau}$ on this extended domain.  
We write the power series for $\lvert z - x^*_{\tau} \rvert < \epsilon$ 
where $\epsilon > 0$ is small enough,
\begin{equation} \label{eq:Ftauexpansion} 
  F_{\tau}(z) = x^* + \sum^{\infty}_{l = 1} f_l (z-x^*_{\tau})^l
\end{equation}
with certain coefficients $f_1, f_2, \ldots$. Plugging the expansion \eqref{eq:Ftauexpansion}  of $w= F_{\tau}(z)$ into $z = w + \tau G_{\mu_0}(w)$, by \eqref{eq:wplustauGmu0}, we have  
\begin{equation} \label{eq:asy_exp_F_tau}
  z - x^*_{\tau} = \left(1 - \frac{\tau}{\tau_{\crit}}\right) \sum_{l = 1}^{2k} f_l (z-x^*_{\tau})^l - \tau \sum^{2k}_{j = 2}  g_j \left[ \sum^{2k}_{l = 1} f_l (z - x^*_{\tau})^l \right]^j + O((z-x^*_{\tau})^{2k+1}),
\end{equation}
as $z\to x^*_{\tau}$,
and by comparing coefficients, this identity recursively yields identities for $f_j$, $j=0,\ldots, f_{2k-1}$ in terms of $g_0, \dotsc, g_j$. For instance, we find
\begin{equation} \label{def:f1} 
  f_1 = \left(1 - \frac{\tau}{\tau_{\crit}}\right)^{-1} =  \frac{\tau_{\crit}}{\tau_{\crit}-\tau}  > 0.
\end{equation}

The next coefficients $f_2, \ldots, f_{2k-1}$  are all real, since 
$g_0, g_1, \ldots, g_{2k - 1}$ are real, but their
precise values are not of interest to us. The coefficient $g_{2k}$ is not real by \eqref{eq:Img2k}, and as a result $f_{2k}$ has a non-zero imaginary part.
Equating the coefficients of $(z-x^*_{\tau})^{2k}$ on both sides of \eqref{eq:asy_exp_F_tau} and keeping only the imaginary part, we obtain the identity 
\begin{equation} \label{eq:intermediate_step}
  0 = \left(1 - \frac{\tau}{\tau_{\crit}} \right) \left(\Im f_{2k}\right)- \tau \left(\Im g_{2k} \right) f_1^{2k},
\end{equation}
which by \eqref{eq:Img2k}, \eqref{def:f1}, and \eqref{def:ctau} leads to
\begin{equation}  \label{eq:Imf2k}
  \Im f_{2k} = \tau \pi \left( \frac{\tau_{\crit}}{\tau_{\crit}-\tau}\right)^{2k+1} c_0^{2k+1}=\tau \pi  c_\tau^{2k+1}.
\end{equation}

We next insert the expansion \eqref{eq:Ftauexpansion} for $w = F_{\tau}(z)$ 
back into \eqref{eq:Gmu0expansion1} around $z = x^*_{\tau}$.
Then $G_{\mu_0}(F_{\tau}(z))$ has a convergent power series in 
a neighborhood of $x_\tau^*$, say for $\lvert z - x^*_{\tau} \rvert < \epsilon$ 
with some $\epsilon > 0$.
By \eqref{def:Gtau} we obtain the expansion up to order $(z - x^*_{\tau})^{2k}$,
\begin{equation}\label{eq:expansion G}
  G_{\mu_{\tau}}(z) = G_{\mu_0}(F_{\tau}(z)) = -\sum_{j = 0}^{2k}  g_j \left[ \sum_{l = 1}^{2k} f_l (z-x^*_{\tau})^l \right]^j + O((z-x_{\tau}^*)^{2k+1}).
\end{equation}
We now let $s \in \mathbb R$ be in the interval 
$(x^*_{\tau} - \epsilon, x^*_{\tau} + \epsilon)$, and substitute $z = s$ in \eqref{eq:expansion G}. Since the coefficients $g_1, \ldots, g_{2k-1}$,
$f_1, \ldots, f_{2k-1}$ are real, there are only two contributions on the right-hand side of \eqref{eq:expansion G}
to the  imaginary part up to order $(s - x^*_{\tau})^{2k}$, and we have
\begin{equation}
  \Im G_{\mu_{\tau}}(s) = (- g_1 \Im f_{2k} - f^{2k}_1 \Im g_{2k})  (s - x^*_{\tau})^{2k} + O((s - x^*_{\tau})^{2k+1}),\qquad s\to x^*_{\tau},
\end{equation}
which by \eqref{def:g1}, \eqref{eq:Img2k}, \eqref{def:f1}, and  \eqref{eq:Imf2k} leads to
\begin{equation} \label{eq:ImGtau} 
  \Im G_{\mu_\tau}(s) = - \pi c_0^{2k+1} (s-x_{\tau}^*)^{2k} + O ( (s-x_{\tau}^*)^{2k+1} ),\qquad s\to x_\tau^*.
\end{equation}
Then by \eqref{eq:Gtau} we obtain \eqref{eq:propsingularint} and prove Theorem \ref{thm:mainmacro}\ref{enu:thm:mainmacro_1}.

\subsubsection{Proof of Theorem \ref{thm:macrocritical}\ref{enu:thm:macrocritical_I}, \ref{enu:thm:macrocritical_II}, \ref{enu:thm:macrocritical_III}}\label{subsubsec:interior_crit}

We now assume $\tau = \tau_{\crit}$. Then, the series \eqref{eq:wplustauGmu0} becomes 
\begin{equation}\label{eq:wcrit}
  w + \tau_{\crit} G_{\mu_0}(w) = x^*_{\tau_{\crit}} - \tau_{\crit} \sum^{\infty}_{j=2} g_j (w-x^*)^j
\end{equation}
such that its first derivative at $x^*$ vanishes. We would like to extend $F_{\tau_{\crit}}(z)$, the inverse function of $w + \tau_{\crit} G_{\mu_0}(w)$ from $\mathbb C^+$ to $\Omega_{\tau_{\crit}}$, to a neighborhood of $x^*_{\tau_{\crit}}$, as we did in Section \ref{subsubsec:interior_subcrit}. 
But due to the vanishing of the first derivative, the inverse function of $w + \tau_{\crit} G_{\mu_0}(w)$ is no longer well defined locally in a neighborhood 
of $x^*_{\crit}$, although it still can be defined with a branch cut. The behavior of the local inverse function (with a branch cut) depends on the first non-zero term in the series in the right-hand side of \eqref{eq:wcrit}.

If $k=1$, we have by \eqref{eq:Img2k}, 
\begin{equation} \label{eq:g2CaseI}
  g_2 = c_0^{3}\,\dashint_{\supp(\mu_0)} \frac{h(s)}{s-x^*} ds + \pi i c_0^3.
\end{equation}
Thus $-\tau_{\crit} g_2$ is the first nonzero coefficient, 
and it has a negative imaginary part. This is our \textbf{Case \ref{enu:thm:macrocritical_I}} and it corresponds
to part \ref{enu:thm:macrocritical_I} of Theorem \ref{thm:macrocritical}.

If $k \geq 2$, we have
\begin{equation} \label{eq:g2CaseII}
  g_2 = c_0^{2k+1}\int_{\supp(\mu_0)} (s-x^*)^{2k-3}h(s)ds. 
\end{equation}
Generically $g_2 \neq 0$, and $-\tau_{\crit} g_2$ is the first nonzero coefficient. 
This is \textbf{Case \ref{enu:thm:macrocritical_II}} and it corresponds to part \ref{enu:thm:macrocritical_II} of the theorem. 
The difference with Case \ref{enu:thm:macrocritical_I} is that now $g_2$ is real.

But $g_2$ could vanish, and it does so for instance if $h$ is symmetric about $x^*$. 
So besides Cases \ref{enu:thm:macrocritical_I} and \ref{enu:thm:macrocritical_II}, we have \textbf{Case \ref{enu:thm:macrocritical_III}}, when $g_2 = 0$, and 
$-\tau_{\crit} g_3$ is the first nonzero coefficient. 
Since $h(s) > 0$ almost everywhere on $\supp(\mu_0)$, we always have
\begin{equation} \label{eq:positive_g_3}
  g_3 = c_0^{2k+1} \int (s-x^*)^{2k-4}h(s)ds>0.
\end{equation}

For each of the three cases, we need the following result that generalizes the 
analytic inverse function theorem:
\begin{lemma} \label{lem:first_Puiseux}
  Let $f(w)$ be an analytic function in a neighborhood of $a$, such that $f(a) = b$, the derivatives $f'(a) = f''(a) = \dotsb = f^{(m - 1)}(a) = 0$, and the $m$-th derivative $f^{(m)}(a) = m! c^m$ for some $c \neq 0$. Let $\epsilon > 0$ be small enough. Then on $\{ z \in \mathbb C \mid \lvert z - b \rvert < \epsilon \, \pi < \arg(z - b) < \pi \}$, there exist $m$ analytic functions $\varphi_0(z), \varphi_1(z), \dotsc, \varphi_{m - 1}(z)$ defined by the Puiseux series
  \begin{equation}
    \varphi_j(z) = a + c^{-1} e^{2j\pi i/m}  (z - b)^{1/m} + \sum^{\infty}_{l = 2} c_{l, j} (z - b)^{l/m}, \quad j = 0, 1, \dotsc, m - 1,
  \end{equation}
  where we take the principal value for power functions, such that $f(\varphi_j(z)) = z$, and they are the only functions that satisfy the condition.
\end{lemma}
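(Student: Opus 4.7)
The plan is to reduce the statement to the classical analytic inverse function theorem after extracting an analytic $m$-th root of the leading behavior of $f$. By the hypothesis on the derivatives, the Taylor series of $f$ at $a$ takes the form
\[
  f(w) - b = c^m (w-a)^m \bigl(1 + R(w)\bigr),
\]
where $R$ is analytic near $a$ with $R(a) = 0$. On a small enough neighborhood $U$ of $a$ we have $|R(w)| < 1$, so the principal branch of $(1+\zeta)^{1/m}$ applies and $(1+R(w))^{1/m}$ is analytic on $U$. Define
\[
  g(w) = c (w-a) \bigl(1+R(w)\bigr)^{1/m}.
\]
Then $g$ is analytic on $U$ with $g(a) = 0$, $g'(a) = c \neq 0$, and $g(w)^m = f(w) - b$.

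Since $g'(a) \neq 0$, the standard analytic inverse function theorem gives a $\delta > 0$ and an analytic inverse $g^{-1}$ on the disk $\{|\zeta| < \delta\}$ with $g^{-1}(0) = a$ and Taylor expansion $g^{-1}(\zeta) = a + c^{-1}\zeta + \sum_{l \geq 2} d_l \zeta^l$. Choose $\epsilon > 0$ so that $\epsilon^{1/m} < \delta$. On the slit disk $D = \{0 < |z-b| < \epsilon,\ -\pi < \arg(z-b) < \pi\}$ the principal branch $(z-b)^{1/m}$ is analytic and takes values in $\{|\zeta| < \delta\}$. For each $j \in \{0, 1, \dotsc, m-1\}$ set
\[
  \varphi_j(z) = g^{-1}\bigl(e^{2\pi i j/m} (z-b)^{1/m}\bigr),
\]
which is analytic on $D$ as a composition of analytic maps. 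By construction $f(\varphi_j(z)) = b + g(\varphi_j(z))^m = b + e^{2\pi i j}(z-b) = z$, and substituting the Taylor series of $g^{-1}$ into this definition produces exactly the Puiseux expansion written in the statement, with $c_{l,j} = d_l \, e^{2\pi i j l/m}$.

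For uniqueness, suppose $\varphi$ is any analytic function on $D$ satisfying $f(\varphi(z)) = z$ together with $\varphi(z) \to a$ as $z \to b$. Then $\varphi(z) \in U$ for $z$ near $b$, so $g(\varphi(z))^m = z-b$, meaning $g \circ \varphi$ is a continuous $m$-th root of $z-b$ on the simply connected slit disk. It must therefore agree on $D$ with the single-valued branch $e^{2\pi i j/m}(z-b)^{1/m}$ for some fixed $j$. Because $g$ is a biholomorphism from $U$ onto its image, applying $g^{-1}$ gives $\varphi = \varphi_j$. There is no real obstacle here: the only delicate point is arranging the neighborhoods so that the principal branches of both $(1+R(w))^{1/m}$ and $(z-b)^{1/m}$ are defined and compatible, which is handled by choosing $U$ and $\epsilon$ sufficiently small.
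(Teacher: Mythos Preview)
Your proof is correct and is precisely the standard argument the paper has in mind: the paper does not give a detailed proof but simply states that the lemma ``is proved by an application of the usual analytic inverse function theorem'' and cites a textbook. Your write-up spells out exactly that reduction --- factor out the leading power, take an analytic $m$-th root to obtain a map $g$ with $g'(a)\neq 0$, invert $g$, and compose with the $m$ branches of $(z-b)^{1/m}$ --- so there is nothing to add or correct.
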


Lemma \ref{lem:first_Puiseux} is proved by an application of the usual analytic 
inverse function theorem, see e.g.\ \cite[Pages 105--106]{Noguchi98}. 

In what follows, when we consider the value of the function $(z-x^*_{\tau_{\crit}})^{\gamma}$ for $z \in (-\infty, x^*_{\tau_{\crit}})$, 
we take it as the continuation from $\mathbb C^+$. 

\paragraph{Case \ref{enu:thm:macrocritical_I}: Proof of Theorem \ref{thm:macrocritical}\ref{enu:thm:macrocritical_I}}

Note that $\Im g_2 > 0$ by \eqref{eq:g2CaseI}, and in this case we take the value of $\sqrt{-g_2}$ in the way that $3\pi/2 < \arg\sqrt{-g_2} < 2\pi$.  
Using Lemma \ref{lem:first_Puiseux} with $f(w) = w + \tau_{\crit} G_{\mu_0}(w)$, 
$a = x^*$, $b = x^*_{\tau_{\crit}}$ and $c = \sqrt{\tau_{\crit}} \sqrt{-g_2}$, 
we have that for $\lvert z - x^*_{\tau_{\crit}} \rvert < \epsilon$ and $-\pi < \arg(z - x^*_{\tau_{\crit}}) < \pi \}$ where $\epsilon > 0$ is small enough, there exist 
analytic functions
\begin{equation}\label{eq:Fcrit}
  \varphi_j(z) = x^* + \frac{(-1)^j}{\sqrt{\tau_{\crit}} \sqrt{-g_2}}(z - x^*_{\tau_{\crit}})^{1/2} + \sum^{\infty}_{l = 2} c_{l, j} (z - x^*_{\tau_{\crit}})^{l/2}, \quad j = 0, 1,
\end{equation}
such that $f(\varphi_j(z)) = z$. Since $F_{\tau_{\crit}}(z)$ is the inverse function of $f(w)$ from $\mathbb C^+$ to $\Omega_{\tau_{\crit}}$, it also satisfies 
$f(F_{\tau_{\crit}}(z)) = z$. It is not hard to check that in the common domain 
where $\lvert z - x^*_{\tau_{\crit}} \rvert < \epsilon$ and $\Im z > 0$,  $F_{\tau_{\crit}}(z)$ agrees with $\varphi_0(z)$ but not with $\varphi_1(z)$. 
Thus $F_{\tau_{\crit}}(z)$ can be extended to be defined for 
$\lvert z - x^*_{\tau_{\crit}} \rvert < \epsilon$ and 
$ -\pi < \arg(z - x^*_{\tau_{\crit}}) < \pi$ by the formula of $\varphi_0(z)$ in \eqref{eq:Fcrit}. 
We assume this extension, and use the Puiseux expansion \eqref{eq:Fcrit} 
with $j = 0$ for $F_{\tau_{\crit}}(z)$ around $x^*_{\tau_{\crit}}$. 

By \eqref{eq:Gtau}, \eqref{eq:Gmu0expansion1}, and \eqref{eq:Fcrit}, we can then compute the local behavior of the density $\psi_{\tau_{\crit}}(s)$ as $s \to x^*_{\tau_{\crit}}$:
\begin{equation}
  \psi_{\tau_{\crit}}(s) = \frac{1}{\pi} \Im \left[ g_0 +  \frac{g_1}{\sqrt{\tau_{\crit}} \sqrt{-g_2}}(s - x^*_{\tau_{\crit}})^{1/2} + O(s - x^*_{\tau_{\crit}})  \right],
\end{equation}
where $(s - x^*_{\tau_{\crit}})^{1/2}$ is positive if $s > x^*_{\tau_{\crit}}$ 
and is equal to $i \lvert s - x^*_{\tau_{\crit}} \rvert^{1/2}$ if $s < x^*_{\tau_{\crit}}$. Since $g_0$ and $g_1 = \tau^{-1}_{\crit}$ are real, we obtain
\begin{equation}\label{eq:psilocalcrit}
  \psi_{\tau_{\crit}}(s) = \frac{1}{\pi\tau_{\crit}^{3/2}}\Im \left[ \frac{1}{\sqrt{-g_2}} (s-x^*_{\tau_{\crit}})^{1/2} \right] + O(s-x_{\tau_{\crit}}^*),\qquad s \to x^*_{\tau_{\crit}}.
\end{equation}
Writing $g_2=-c_0^{3}re^{-i\theta}$, where $r$ and $\theta$ have values as in \eqref{def:rtheta}, we finally obtain
\eqref{eq:psilocalcritthmI}. This proves Theorem \ref{thm:macrocritical}\ref{enu:thm:macrocritical_I}.

\paragraph{Case \ref{enu:thm:macrocritical_II}: Proof of Theorem \ref{thm:macrocritical}\ref{enu:thm:macrocritical_II}}

Here $g_2$ is real by \eqref{eq:g2CaseII}, and nonzero. We let $\sqrt{-g_2}$ be real and positive if $g_2 < 0$, and be equal to $-i g^{1/2}_2$ if $g_2 > 0$. As in Case \ref{enu:thm:macrocritical_I}, we use Lemma \ref{lem:first_Puiseux} with $f(w) = w + \tau_{\crit} G_{\mu_0}(w)$, $a = x^*$, $b = x^*_{\tau_{\crit}}$, and $c = \sqrt{\tau_{\crit}} \sqrt{-g_2}$, and have that 
in a region where $\lvert z - x^*_{\tau_{\crit}} \rvert < \epsilon$ and 
$ -\pi < \arg(z - x^*_{\tau_{\crit}}) < \pi $ where $\epsilon > 0$ is a small enough, $\varphi_0(z)$ and $\varphi_1(z)$ are well defined in \eqref{eq:Fcrit} such that $f(\varphi_j(z)) = z$. 
By the same argument as in Case \ref{enu:thm:macrocritical_I}, we can check that $F_{\tau_{\crit}}(z)$ agrees with $\varphi_0(z)$ but not with $\varphi_1(z)$. 
So we use the Puiseux expansion \eqref{eq:Fcrit} with $j = 0$ 
for $F_{\tau_{\crit}}(z)$ around $x^*_{\tau_{\crit}}$.

The proof now proceeds in a manner completely analogous to Case \ref{enu:thm:macrocritical_I}. In equation \eqref{eq:wcrit} we replace $w$ with the    Puiseux expansion \eqref{eq:Fcrit} with $j = 0$, i.e. $w = \varphi_0(z) = F_{\tau_{\crit}}(z)$. Keeping terms in the Puiseux expansion up to $O((z - x^*_{\tau_{\crit}})^k)$, we have that as $z\to x^*_{\tau_{\crit}}$ with 
$-\pi < \arg(z - x^*_{\tau_{\crit}}) < \pi$,
\begin{equation} \label{eq:Puiseux_case_II_pos}
  z = x^*_{\tau_{\crit}} -\tau_{\crit} \sum^{2k}_{j = 2}  g_j \left[ \frac{1}{\sqrt{\tau_{\crit}} \sqrt{-g_2}} (z - x^*_{\tau_{\crit}})^{1/2} + \sum^{2k - 1}_{l = 2} c_{l, 0} (z - x^*_{\tau})^{l/2} \right]^j + O ( (z-x^*_{\tau_{\crit}})^{(2k + 1)/2} ).
\end{equation}
Similar to the coefficients in \eqref{eq:Ftauexpansion}, the coefficient
$c_{l, 0}$ depends on $g_2, g_3, \dotsc, g_{l+1}$, and can be computed recursively. 

First we consider the case that $g_2 < 0$. Then $(\sqrt{\tau_{\crit}} \sqrt{-g_2})^{-1}$ is real and positive. 
Since $g_2, \ldots, g_{2k - 1}$ are all real, we have that $c_{2, 0}, \ldots, 
c_{2k - 2, 0}$ are all real, and we do not need their explicit expressions. 
Then analogous to \eqref{eq:intermediate_step}, by comparing the imaginary part of the coefficients of the $(z - x^*_{\tau_{\crit}})^{k}$ term, we have
\begin{equation} \label{eq:im_part_Case_II}
  0 = -\tau_{\crit} \left( 2 g_2 \frac{1}{\sqrt{\tau_{\crit}} \sqrt{-g_2}} \Im c_{2k - 1, 0} + \left( \frac{1}{\sqrt{\tau_{\crit}} \sqrt{-g_2}} \right)^{2k} \Im g_{2k} \right),
\end{equation}
and obtain by \eqref{eq:Img2k} ,
\begin{equation} \label{eq:c_2k-1_0_Case_II}
  \Im c_{2k - 1, 0} = \frac{\pi c_0^{2k+1}}{2\sqrt{-\tau^{2k - 1}_{\crit} g^{2k + 1}_2}} > 0.
\end{equation}
Then using the Puiseux expansion \eqref{eq:Fcrit} (with $j = 0$) of $F_{\tau_{\crit}}(z)$, we have that as $s \to \left(x^*_{\tau_{\crit}}\right)_-$, by \eqref{eq:Gtau}, \eqref{eq:Gmu0expansion1}, and \eqref{def:g1}, 
\begin{equation} \label{eq:Case_II_g_2<0_left}
  \begin{split}
    \psi_{\tau_{\crit}}(s) = {}& -\frac{1}{\pi} \Im G_{\mu_0}(F_{\tau_{\crit}}(s)) \\
    = {}& -\frac{1}{\pi} \Im \left[- g_1 \frac{1}{\sqrt{\tau_{\crit}} \sqrt{-g_2}} (s - x^*_{\tau_{\crit}})^{1/2}  + O(s - x^*_{\tau_{\crit}}) \right] \\
    = {}& \frac{1}{\pi \tau^{3/2}_{\crit} \sqrt{-g_2}} \lvert s - x^*_{\tau_{\crit}} \rvert^{1/2} + O(s - x^*_{\tau_{\crit}}),
  \end{split}
\end{equation}
which is similar to \eqref{eq:psilocalcrit}.
On the other hand, for $s > x^*_{\tau_{\crit}}$, since $g_1, \ldots, g_{2k - 1}$, $c_{2, 0}, \ldots, c_{2k - 2, 0}$ are all real, we have by \eqref{eq:Gtau}, \eqref{eq:Gmu0expansion1}, and \eqref{eq:c_2k-1_0_Case_II}
\begin{equation}
  \begin{split}
    \psi_{\tau_{\crit}}(s) = {}& - \frac{1}{\pi} \Im G_{\mu_0}(F_{\tau_{\crit}}(s)) \\
    = {}& -\frac{1}{\pi} \Im \left[- g_1 c_{2k - 1, 0} (s - x^*_{\tau_{\crit}})^{k-1/2} + O \left( (s - x^*_{\tau_{\crit}})^k\right)  \right] \\
    = {}& \frac{c_0^{2k+1}}{2(-\tau_{\crit} g_2)^{k+1/2}} \lvert s - x^*_{\tau_{\crit}} \rvert^{k-1/2} + O ( (s - x^*_{\tau_{\crit}})^k ),\qquad s \to \left(x^*_{\tau_{\crit}}\right)_+.
  \end{split}
\end{equation}
Hence we proved the $g_2 < 0$ part of Theorem \ref{thm:macrocritical}\ref{enu:thm:macrocritical_II}.

The proof for $g_2 > 0$ is nearly identical.

\paragraph{Case \ref{enu:thm:macrocritical_III}: Proof of Theorem \ref{thm:macrocritical}\ref{enu:thm:macrocritical_III}}

In Case \ref{enu:thm:macrocritical_III} we have by \eqref{eq:wplustauGmu0}, as $w\to x^*$,
\begin{equation}\label{eq:wcritIII}
  w + \tau_{\crit} G_{\mu_0}(w) = x^*_{\tau_{\crit}} - \tau_{\crit} g_3 (w-x^*)^3 + O((w-x^*)^{4}),
\end{equation}
where the coefficient $-\tau_{\crit} g_3$ of $(w - x^*)^3$ is negative by \eqref{eq:positive_g_3}. Using Lemma \ref{lem:first_Puiseux} with $f(w) = w + \tau_{\crit} G_{\mu_0}(w)$, $a = x^*$, $b = x^*_{\tau_{\crit}}$ and $c = -(\tau_{\crit}g_2)^{1/3}$, we have that for $z$ with $\lvert z - x^*_{\tau_{\crit}} \rvert < \epsilon$ and 
$ -\pi < \arg(z - x^*_{\tau_{\crit}}) < \pi$ where $\epsilon > 0$ is small enough, there exist analytic functions
\begin{equation}\label{eq:Fcrit_Case_III}
  \varphi_j(z) = x^* - \frac{e^{2j \pi i/3}}{(\tau_{\crit} g_2)^{1/3}}(z - x^*_{\tau_{\crit}})^{1/3} + \sum^{\infty}_{l = 2} c_{l, j} (z - x^*_{\tau_{\crit}})^{l/3}, \quad j = 0, 1, 2,
\end{equation}
such that $f(\varphi_j(z)) = z$. Since $F_{\tau_{\crit}}(z)$ is the inverse function of $f(z)$ from $\mathbb C^+$ to $\Omega_{\tau_{\crit}}$, it also satisfies $f(F_{\tau_{\crit}}(z)) = z$. It is not hard to check that in the common domain 
where $\lvert z - x^*_{\tau_{\crit}} \rvert < \epsilon$ and $\Im z > 0$, $F_{\tau_{\crit}}(z)$ agrees with $\varphi_2(z)$ but not with $\varphi_0(z)$ or $\varphi_1(z)$. Thus $F_{\tau_{\crit}}(z)$ can be extended to be defined for
$z$ in $\lvert z - x^*_{\tau_{\crit}} \rvert < \epsilon$ and $-\pi < \arg(z - x^*_{\tau_{\crit}}) < \pi$ by the formula of $\varphi_0(z)$ in 
\eqref{eq:Fcrit_Case_III}. 

Then by \eqref{eq:Gtau}, \eqref{eq:Gmu0expansion1}, and the Puiseux expansion of $F_{\tau_{\crit}}$, we obtain that as $s \to x^*_{\tau_{\crit}}$,
\begin{equation} \label{eq:psilocalcritfinalIII}
  \begin{split}
    \psi_{\tau_{\crit}}(s) = {}& \frac{1}{\pi} \Im \left[ g_0 + g_1 \frac{e^{i\pi/3}}{(\tau_{\crit} g_3)^{1/3}} (s - x^*_{\tau_{\crit}})^{1/3}  \right] +O ( (s - x^*_{\tau_{\crit}})^{2/3}) \\
    = {}& \frac{\sqrt{3}}{2\pi\tau_{\crit}^{4/3}g_3^{1/3}} \lvert s-x^*_{\tau_{\crit}} \rvert^{1/3} + O ( (s - x^*_{\tau_{\crit}})^{2/3}).
  \end{split}
\end{equation}
Here we used the fact that $g_0$ is real, $g_1 = \tau^{-1}_{\crit}$ and $g_3>0$. Thus we have proven Theorem \ref{thm:macrocritical}\ref{enu:thm:macrocritical_III}.

\subsection{Singular edge point}
We now turn to the case of a singular right endpoint. We assume that $x^*$ is the point in Assumption \ref{assumption:b}\ref{enu:assumption:b_2}, and make use of the analyticity of $h(x)$ at $x^*$. 

\subsubsection{Preliminaries} \label{subsubsec:preliminaries_edge}

Similar to \eqref{defn_g_j_interior} and \eqref{eq:Img2k} in Section \ref{subsec:interior_global}, we define
\begin{equation} \label{eq:defn_g_j_edge}
  g_j = \int \frac{d\mu_0(s)}{(s-x^*)^{j + 1}} =c_0^{2k+3/2} \int_{\supp(\mu_0)} \frac{|s-x^*|^{2k + 1/2}}{(s-x^*)^{j+1}} h(s) ds, \quad j = 0, 1, \ldots, 2k.
\end{equation}
We have
\begin{equation}
  \frac{1}{w-s} = \frac{1}{x^*-s} - \sum_{j=1}^{2k} \frac{(w-x^*)^j}{(s-x^*)^{j+1}} + \frac{(w-x^*)^{2k + 1}}{(w-s)(s-x^*)^{2k + 1}},
\end{equation}
and then similar to \eqref{eq:Gmu0expansion1}, we find 
\begin{equation} \label{eq:Gmu0expansion2} 
  G_{\mu_0}(w) = -\sum_{j=0}^{2k} g_j  ( w-x^*)^j + F(w) (w-x^*)^{2k + 1}, 
\end{equation}
with real coefficients $g_1, \ldots, g_{2k}$ and
\begin{equation}
  F(w) = \int \frac{d\mu_0(s)}{(w - s) (s-x^*)^{2k+1}} = c^{2k + 3/2}_0 \int_{\supp(\mu_0)} \frac{\sgn(s - x^*) h(s)}{(w - s) \lvert s - x^* \rvert^{1/2}} ds.
\end{equation}
The expression \eqref{defn_g_j_interior} for $g_j$ ($j = 0, \dotsc, 2k - 1$) continues to hold, and it generalizes to $j = 2k$. Also \eqref {def:g1} holds for $g_0$ and $g_1$.
 
Since $x^*$ is a right endpoint of an interval of the support of $\mu_0$ 
and $h$ is real analytic near $x^*$ with $h(x^*)=1$, we find after a straightforward calculation that 
\begin{equation}  \label{eq:psi0integral}
  F(w) = -\frac{c_0^{2k+3/2}\pi}{(w-x^*)^{1/2}} + O(1),\quad \text{as $w \to x^*$,}
\end{equation}
where $(w-x^*)^{1/2}$ has its branch cut on $(-\infty, x^*)$ and we take the branch $(w - x^*)^{1/2} > 0$ for $s \in (x^*, \infty)$. 
Although $F(w)$ is defined on $\mathbb C^+$, it can be extended to
the region  $\lvert w - x^* \rvert < \delta$ and 
$-\pi < \arg(w - x^*) < \pi$,  since for a small enough $\delta > 0$, $(x^*, x^* + \delta)$ is not in the support of $\mu_0$. On the branch cut $x \in (x^* - \delta, x^*)$, we have jump condition,
\begin{equation}
  \lim_{y \to 0_+} F(x + iy) - \lim_{y \to 0_-} F(x + iy) = -2\pi i c_0^{2k+3/2}\frac{h(x)}{\sqrt{x^* - x}}.
\end{equation}
It follows that that $F(w) + \pi c_0^{2k+3/2}h(w)/(w - x^*)^{1/2}$ can be extended to a holomorphic function in a neighborhood of $x^*$. Then by \eqref{eq:Gmu0expansion2}, $G_{\mu_0}(w)$ can be extended to 
$ \lvert x - x^* \rvert < \delta$, $-\pi < \arg(w - x^*) < \pi$, with the Puiseux expansion 
\begin{equation} \label{eq:expansion_G_mu_0_edge}
  G_{\mu_0}(w) = -\sum^{2k}_{j = 0} g_j( w-x^*)^j - \sum^{\infty}_{j = 2k + 1} \left( g_{j - 1/2} (w - x^*)^{j - 1/2} + g_j (w - x^*)^j \right),
\end{equation}
where all the coefficients $g_j$ and $g_{j - 1/2}$ are real, and by the assumption that $h(x^*) = 1$,
\begin{equation} \label{eq:value_g_2k+1/2_edge}
  g_{2k + 1/2} = \pi c^{2k + 3/2}_0 h(x^*) = \pi c^{2k + 3/2}_0.
\end{equation}
                
Using \eqref{eq:expansion_G_mu_0_edge}, \eqref{def:xtau}, and \eqref{def:g1}, we obtain
\begin{multline} \label{eq:wplustauGmu02}
  w + \tau G_{\mu_0}(w) = x^*_{\tau} + \left(1 - \frac{\tau}{\tau_{\crit}}\right) (w-x^*) \\
  - \tau \sum^{2k}_{j = 2} g_j( w-x^*)^j - \tau \sum^{\infty}_{j = 2k + 1} \left( g_{j - 1/2} (w - x^*)^{j - 1/2} + g_j (w - x^*)^j \right).
\end{multline}

For the analysis in Sections \ref{subsubsec:edge_sub} and \ref{subsubsec:edge_crit}, we define for $\lvert u \rvert < \delta^{1/2}$, where $\delta$ is the convergence radius in \eqref{eq:expansion_G_mu_0_edge},
\begin{equation} \label{eq:tilde_G_tau_expansion}
  \tilde{G}_{\tau}(u) = x^*_{\tau} + \left(1 - \frac{\tau}{\tau_{\crit}}\right) u^2 - \tau \sum^{2k}_{j = 2} g_j u^{2j} - \tau \sum^{\infty}_{j = 2k + 1} \left( g_{j - 1/2} u^{2j - 1} + g_j u^{2j} \right).
\end{equation}
The convergence of \eqref{eq:expansion_G_mu_0_edge} implies the convergence of \eqref{eq:tilde_G_tau_expansion}. Then $\tilde{G}_{\tau}(u)$ is an analytic function in a neighborhood around $0$, with
\begin{equation}
  w + \tau G_{\mu_0}(w) = \tilde{G}_{\tau}(\sqrt{w - x^*})
\end{equation}
for $\lvert w - x^* \rvert < \delta$ and $-\pi < \arg(w - x^*) < \pi$. Since $F_{\tau}(z): \mathbb C_+ \to \Omega_{\tau}$ is the inverse mapping of $w + \tau G_{\mu_0}(w)$, we define the function
\begin{equation} \label{eq:defn_tilde_F_edge}
  \tilde{F}_{\tau}(z) = (F_{\tau}(z) - x^*)^{1/2},
\end{equation}
and have that $\tilde{F}_{\tau}(z)$, from $\mathbb C^+$ to $\{ u \in \mathbb C^+ \mid u^2 + x^2 \in \Omega_{\tau} \}$, is the inverse mapping of $\tilde{G}_{\tau}$.


\subsubsection{Proof of Theorem \ref{thm:mainmacro}\ref{enu:thm:mainmacro_2}} \label{subsubsec:edge_sub}

Using Lemma \ref{lem:first_Puiseux} with $f(w) = \tilde{G}_{\tau}(w)$, $a = 0$, $b = x^*_{\tau}$, and $c = (1 - \tau/\tau_{\crit})^{1/2}$, we have that in the region  $\lvert z - x^*_{\tau} \rvert < \epsilon$, $\pi < \arg(z - x^*{\tau}) < \pi$ where $\epsilon > 0$ is small enough, there exist analytic functions
\begin{equation} \label{eq:tilde_F_sub}
  \varphi_j(z) = \frac{(-1)^j}{\sqrt{1 - \tau/\tau_{\crit}}} (z - x^*_{\tau})^{1/2} + \sum^{\infty}_{l = 2} c_{l, j} (z - x^*_{\tau})^{l/2}, \quad j = 0, 1,
\end{equation}
such that $\tilde{G}_{\tau}(\varphi_j(z)) = z$. Since $\tilde{F}_{\tau}(z)$, from $\mathbb C^+$ to $\{ u \in \mathbb C^+ \mid u^2 + x^2 \in \Omega_{\tau} \}$, is the inverse mapping of $\tilde{G}_{\tau}$, it is easily checked that in the common domain 
where $\lvert z - x^*_{\tau} \rvert < \epsilon$ and $\Im z > 0$,  $\tilde{F}_{\tau}(z)$ agrees with $\varphi_0(z)$ but not $\varphi_1(z)$. Thus $\tilde{F}_{\tau}(z)$ can be extended to be defined in $\lvert z - x^*_{\tau} \rvert < \epsilon$ with $ \pi < \arg(z - x^*_{\tau}) < \pi$ by the formula of $\varphi_0(z)$ in \eqref{eq:tilde_F_sub}, and we assume this extension below. 

From the coefficients $g_j$ (and $g_{j - 1/2}$) of $\tilde{G}_{\tau}$ in \eqref{eq:tilde_G_tau_expansion} we can derive the coefficients $c_{l, 0}$ of $\varphi_0(z)$ in \eqref{eq:tilde_F_sub}. Since all coefficients in the expansion of $\tilde{G}_{\tau}$ are real, we have that all $c_{l, 0}$ are real. Since the coefficients of the $u^{2j - 1}$ terms vanish for $j = 1, \dotsc, k$, we have that $c_{2l, 0} = 0$ for $l = 1, \dotsc, 2k - 1$. Then from the coefficient $=\tau g_{2k + 1/2} = -\pi \tau c^{2k + 3/2}_0$ of $u^{4k + 1}$ we have
\begin{equation}
  0 = \left( 1 - \frac{\tau}{\tau_{\crit}} \right) \frac{2}{\sqrt{1 - \tau/\tau_{\crit}}} c_{4k + 1, 0} - \tau g_{2k + 1/2} \left( \frac{1}{\sqrt{1 - \tau/\tau_{\crit}}} \right)^{4k + 1},
\end{equation}
and
\begin{equation} \label{eq:c_4k_0_sub}
  c_{4k, 0} = \frac{\pi \tau c^{2k + 3/2}_0}{2(1 - \tau/\tau_{\crit})^{2k + 1}}.
\end{equation}
Taking the square of the expansion \eqref{eq:tilde_F_sub} of $\varphi_0(z)$ and the relation \eqref{eq:defn_tilde_F_edge}, we can extend $F_{\tau}(z)$ to 
$\lvert z - x^*_{\tau} \rvert < \epsilon$, $\pi < \arg(z - x^*_{\tau}) < \pi$, 
and have its Puiseux expansion
\begin{equation} \label{eq:Ftauexpansion2} 
  F_{\tau}(z) = x^* + \sum^{2k}_{l = 1} f_l (z - x^*_{\tau})^l + \sum^{\infty}_{l = 2k + 1} \left( f_{l - 1/2} (z - x^*_{\tau})^{l - 1/2} + f_l (z - x^*_{\tau})^l \right).
\end{equation}
Then we have that all the coefficients of $F_{\tau}(z)$ in \eqref{eq:Ftauexpansion2} are real,
\begin{equation} \label{eq:f10}
	f_1  = \frac{\tau_{\crit}}{\tau_{\crit} - \tau}  > 0,
	\end{equation}
the same as \eqref{def:f1}, and then by \eqref{eq:c_4k_0_sub},
\begin{equation} \label{eq:fkappa}	
  f_{2k + 1/2} = \frac{2}{\sqrt{1 - \tau/\tau_{\crit}}} c_{4k, 0} = \tau \pi c_0^{2k+3/2} \left( \frac{\tau_{\crit}}{\tau_{\crit}-\tau}\right)^{2k + 3/2}.
\end{equation}

We now find, as $s \to \left(x_{\tau}^*\right)_-$,
\begin{equation} \label{eq:G_tau_expansion_edge_sub}
  \begin{split}
    \psi_{\tau}(s) = {}& -\frac{1}{\pi} \Im G_{\tau}(s) = -\frac{1}{\pi} \Im G_{\mu_0}(F_{\tau}(s)) \\
    = {}& \frac{1}{\pi} \sum_{j=1}^{2k} g_j
    \Im \left[ \sum^{2k}_{l = 1} f_l (s - x^*_{\tau})^l + \sum^{\infty}_{l = 2k + 1} (f_{l - 1/2} (s - x^*_{\tau})^{l - 1/2} + f_l (s - x^*_{\tau})^l) \right]^j \\
    & + \frac{1}{\pi} g_{2k + 1/2} \Im \left[f_1 (s - x_{\tau}^*) + O \left( (s - x^*)^2 \right) \right]^{2k+1/2} +
    O( \lvert s - x_{\tau}^* \rvert^{2k + 1}) \\
    = {}& \frac{1}{\pi} (g_1 f_{2k + 1/2} + g_{2k + 1/2} f^{2k + 1/2}_1) \lvert s - x^*_{\tau} \rvert^{2k + 1/2} + O( \lvert s - x_{\tau}^* \rvert^{2k + 1}) \\
    = {}& c^{2k + 3/2}_0 \left( \frac{\tau_{\crit}}{\tau_{\crit} - \tau} \right)^{2k + 3/2} \lvert s - x^*_{\tau} \rvert^{2k + 1/2} + O( \lvert s - x_{\tau}^* \rvert^{2k + 1}).
  \end{split}
\end{equation}
Here we use that the Puiseux expansion of $G_0(F_{\tau}(z))$ at $x^*_{\tau}$ has the form $a_0 + a_1(z - x^*_{\tau}) + a_2 (z - x^*_{\tau})^2 + \dotsb + a_{2k} (z - x^*_{\tau})^{2k} + a_{2k + 1} (z - x^*_{\tau})^{2k + 1} + O((z - x^*_{\tau})^{2k + 1})$, and all the coefficients $a_0, a_1, \dotsc, a_{2k}, a_{2k + 1/2}$ are  real. So we only need to compute the value of $a_{2k + 1/2}$, as in \eqref{eq:G_tau_expansion_edge_sub}. This gives us \eqref{eq:propsingularedge} by \eqref{def:ctau}.

On the other hand, since all the coefficients of the Puiseux expansion \eqref{eq:expansion_G_mu_0_edge} for $G_{\mu_0}(w)$ are real and all the coefficients of the Puiseux expansion \eqref{eq:Ftauexpansion2} for $F_{\tau}(z)$ are real, we have that with small enough $\epsilon$, $G_{\mu_{\tau}}(s) = G_{\mu_0}(F_{\tau}(s))$ is real valued for $s \in (x_{\tau}^*, x_{\tau}^* + \epsilon)$, and then $\psi_{\tau}(s) = 0$, for those $s$. Thus we complete the proof of Theorem \ref{thm:mainmacro}\ref{enu:thm:mainmacro_2}.

\subsubsection{Proof of Theorem \ref{thm:macrocritical}\ref{enu:thm:macrocritical_IV}, \ref{enu:thm:macrocritical_V}} \label{subsubsec:edge_crit}

If $\tau=\tau_{\crit}$, the coefficient of $(w - x^*)$ in \eqref{eq:wplustauGmu02} vanishes. We still define $\tilde{G}_{\tau}(u)$ by \eqref{eq:tilde_G_tau_expansion} with $\tau = \tau_{\crit}$, such that $g_2, \dotsc, g_{2k}, g_{2k + 1/2}, \dotsc$ are real coefficients, while the coefficient for $u^2$ vanishes.

We let $(-g_2)^{1/4}$ and $\sqrt{-g_2}$ be positive if $g_2 < 0$, and let $(-g_2)^{1/4} = e^{-\pi i/4} g^{-1/4}_2$ and $\sqrt{-g_2} = -i \sqrt{g_2}$ if $g_2 > 0$. Using Lemma \ref{lem:first_Puiseux} with $f(w) = \tilde{G}_{\tau}(w)$, $a = 0$, $b = x^*_{\tau}$, and $c = \tau^{1/4}_{\crit} (-g_2)^{1/4}$, we have analogous to \eqref{eq:tilde_F_sub}, in a region 
$\lvert z - x^*_{\tau_{\crit}} \rvert < \epsilon$, $\pi < \arg(z - x^*_{\tau_{\crit}}) < \pi$ where $\epsilon > 0$ is small enough, there exist analytic functions
\begin{equation} \label{eq:tilde_F_crit}
  \varphi_j(z) = \frac{e^{j\pi i/2}}{\tau^{1/4}_{\crit} (-g_2)^{1/4}} (z - x^*_{\tau_{\crit}})^{1/4} + \sum^{\infty}_{l = 2} c_{l, j} (z - x^*_{\tau_{\crit}})^{l/4}, \quad j = 0, 1, 2, 3,
\end{equation}
such that $f(\varphi_j(z)) = z$. 

Since function $\tilde{F}_{\tau_{\crit}}(z)$ defined in \eqref{eq:defn_tilde_F_edge} with $\tau = \tau_{\crit}$ is the inverse mapping of $\tilde{G}_{\mu_{\tau}}$ from $\mathbb C^+$ to $\{ u \in \mathbb C^+ \mid u^2 + x^2 \in \Omega_{\tau_{\crit}} \}$, it is not hard to check that in the common domain $ \lvert z - x^*_{\tau_{\crit}} \rvert < \epsilon$, $\Im z > 0$, $\tilde{F}_{\tau_{\crit}}(z)$ agrees with $\varphi_0(z)$ but not $\varphi_1(z), \varphi_2(z), \varphi_3(z)$. Thus $\tilde{F}_{\tau_{\crit}}(z)$ can be extended to be defined in $\lvert z - x^*_{\tau_{\crit}} \rvert < \epsilon$, 
$\pi < \arg(z - x^*{\tau_{\crit}}) < \pi$ where $\epsilon > 0$ is small enough by the formula of $\varphi_0(z)$ in \eqref{eq:tilde_F_crit}, and we assume the extension below. We consider \textbf{Case \ref{enu:thm:macrocritical_IV}} that $g_2 < 0$ and \textbf{Case \ref{enu:thm:macrocritical_V}} that $g_2 > 0$ separately, and omit the discussion of the more delicate $g_2 = 0$ case.

\paragraph{Case \ref{enu:thm:macrocritical_IV}: Proof of Theorem \ref{thm:macrocritical}\ref{enu:thm:macrocritical_IV}}
We assume $g_2 < 0$.
Similar to the case when $\tau < \tau_{\crit}$, we can compute the coefficients $c_{l, 0}$ for $\tilde{F}_{\tau_{\crit}}(z) = \varphi_0(z)$ in \eqref{eq:tilde_F_crit} by the coefficients in \eqref{eq:tilde_G_tau_expansion}. More precisely, if $k > 1$, then for $j = 1, \dotsc, 2k - 2$, $c_{2j, 0}$ and $c_{2j + 1, 0}$ depend on $g_2, g_3, \dotsc, g_{j + 2}$, otherwise $c_{l, 0}$ depends on all $g_j$ with $j \leq (l + 3)/2$. Since all the coefficients $g_j$ and $g_{j - 1/2}$ in \eqref{eq:tilde_G_tau_expansion} are real, we have that all $c_{l, 0}$ are real, and $c_{2j, 0} = 0$ for $j = 1, 2, \dotsc, 2k - 2$ if $k > 1$. Thus similar to \eqref{eq:Ftauexpansion2}, we can extend $F_{\tau_{\crit}}(z)$ to $ \lvert z - x^*_{\tau_{\crit}} \rvert < \epsilon$, $\pi < \arg(z - x^*{\tau_{\crit}}) < \pi$ with a Puiseux expansion
\begin{multline}\label{eq:Fcrit2}
  F_{\tau_{\crit}}(z) = x^* + \frac{1}{\sqrt{\tau_{\crit}} \sqrt{-g_2}} (z - x^*_{\tau_{\crit}})^{1/2} + \sum^{2k - 1}_{l = 2} f_l (z - x^*_{\tau_{\crit}})^{l/2} \\
  + \sum^{\infty}_{l = 2k} \left( f_{l - 1/2}(z - x^*_{\tau_{\crit}})^{l/2 - 1/4} + f_l(z - x^*_{\tau_{\crit}})^{l/2} \right),
\end{multline}
and all the coefficients $f_l$ and $f_{l - 1/2}$ are real. We now find, 
as $s\to\left(x_{\tau_{\crit}}^*\right)_-$,
\begin{equation}\label{eq:psilocalcrit2}
  \begin{aligned}
    \psi_{\tau_{\crit}}(s) = {}& -\frac{1}{\pi} \Im G_{\tau_{\crit}}(s) = \frac{1}{\pi} \frac{g_1}{\sqrt{\tau_{\crit}} \sqrt{-g_2}} \lvert s - x^*_{\tau_{\crit}} \rvert^{1/2} + O \left( \lvert s - x^*_{\tau_{\crit}} \rvert^{\alpha} \right) & & \\
    = {}& \frac{1}{\pi \tau^{3/2}_{\crit} \sqrt{-g_2}} \lvert s - x^*_{\tau_{\crit}} \rvert^{1/2} + O \left( \lvert s - x^*_{\tau_{\crit}} \rvert^{\alpha} \right), \quad \text{where} \quad \alpha =
    \begin{cases}
      3/4 & \text{if $k = 1$}, \\
      1 & \text{otherwise}.
    \end{cases}
  \end{aligned}
\end{equation}
On the other hand, since all the coefficients of the Puiseux expansion \eqref{eq:expansion_G_mu_0_edge} for $G_{\mu_0}(w)$ are real and all the coefficients of the Puiseux expansion \eqref{eq:Fcrit2} for $F_{\tau_{\crit}}(z)$ are real, we have that with small enough $\epsilon$, $G_{\mu_{\tau_{\crit}}}(s) = G_{\mu_0}(F_{\tau_{\crit}}(s))$ is real valued for $s \in (x_{\tau_{\crit}}^*, x_{\tau_{\crit}}^* + \epsilon)$, and then $\psi_{\tau_{\crit}}(s) = 0$ for 
those $s$. Thus we complete the proof of Theorem \ref{thm:macrocritical}\ref{enu:thm:macrocritical_IV}.

\paragraph{Case \ref{enu:thm:macrocritical_V}: Proof of Theorem \ref{thm:macrocritical}\ref{enu:thm:macrocritical_V}}

We assume $g_2 > 0$.
The computation of coefficients $c_{l, 0}$ for $\tilde{F}_{\tau_{\crit}}(z) = \varphi_0(z)$ in \eqref{eq:tilde_F_crit} is the same as in the $g_2 < 0$ case. We have that if $k > 0$, then for $j = 1, 2, \dotsc, 2k - 2$, $c_{2j, 0} = 0$, $c_{2j + 1, 0}$ is a real number times $e^{(2j + 1)\pi i/4}$.

Then coefficient $c_{4k - 2, 0}$ does not vanish. Using the Puiseux expansions for $\tilde{F}_{\tau_{\crit}}(z) = \varphi_0(z)$ and $\tilde{G}_{\tau_{\crit}}(u)$ to compute $\tilde{G}_{\tau_{\crit}}(\tilde{F}_{\tau_{\crit}}(z)) = z$, and comparing the coefficient of $(z - x^*_{\tau_{\crit}})^{k + 1/4}$, we have
\begin{equation}
  \begin{split}
    0 = {}& -\tau_{\crit} g_2 \left[ 4 \left( \frac{1}{\tau^{1/4}_{\crit} (-g_2)^{1/4}} \right)^3 c_{4k - 2, 0} \right] - \tau_{\crit} g_{2k + 1/2} \left( \frac{1}{\tau^{1/4}_{\crit} (-g_2)^{1/4}} \right)^{4k + 1} \\
    = {}& -\tau_{\crit} g_2 \frac{4 e^{3\pi i/4}}{\tau^{3/4}_{\crit} g^{3/4}_2} c_{4k - 2, 0} - \tau_{\crit} g_{2k + 1/2} \frac{(-1)^k e^{\pi i/4}}{\tau^{k + 1/4}_{\crit} g^{k + 1/4}_2}.
  \end{split}
\end{equation}
Then by \eqref{eq:value_g_2k+1/2_edge}, we have
\begin{equation}
  c_{4k - 2, 0} = \frac{\pi i (-1)^k c^{2k + 3/2}_0}{4 \tau^{k - 1/2}_{\crit} g^{k + 1/2}_2}.
\end{equation}
Then by \eqref{eq:defn_tilde_F_edge}, we have that $F_{\tau_{\crit}}(z)$ can be extended to $\lvert z - x^*_{\tau_{\crit}} \rvert < \epsilon$, $\pi < \arg(z - x^*_{\tau_{\crit}}) < \pi$, and have its Puiseux expansion
\begin{equation} \label{eq:Ftauexpansion2_crit} 
  F_{\tau}(z) = x^* + \sum^{2k - 1}_{l = 1} f_l (z - x^*_{\tau})^{l/2} + \sum^{\infty}_{l = 2k} \left( f_{l - 1/2} (z - x^*_{\tau})^{l/2 - 1/4} + f_l (z - x^*_{\tau})^{l/2} \right).
\end{equation}
We have that for $l = 1, \dotsc, 2k - 1$, $f_l$ is real if $l$ is even, and $f_l$ is purely imaginary if $l$ is odd. Also we have
\begin{equation}
  f_1 = \frac{i}{\tau^{1/2}_{\crit} g^{1/2}_2}, \quad f_{2k - 1/2} = \frac{2}{\tau^{1/4}_{\crit} (-g)^{1/4}} c_{4k - 2, 0} = \frac{\pi e^{3\pi i/4} (-1)^k c^{2k + 3/2}_0}{2 \tau^{k - 1/4}_{\crit} g^{k + 3/4}_2}.
\end{equation}
As $s \to\left(x_{\tau_{\crit}}^*\right)_+$, we have by \eqref{eq:Gtau}
\begin{equation} \label{eq:edge_last}
  \begin{aligned}
    \psi_{\tau_{\crit}}(s) = {}& -\frac{1}{\pi} \Im G_{\tau_{\crit}}(s) = \frac{1}{\pi} \frac{g_1}{\sqrt{\tau_{\crit}} \sqrt{g_2}} \lvert s - x^*_{\tau_{\crit}} \rvert^{1/2} + O \left( \lvert s - x^*_{\tau_{\crit}} \rvert^{\alpha} \right) && \\
    = {}& \frac{1}{\pi \tau^{3/2}_{\crit} \sqrt{g_2}} \lvert s - x^*_{\tau_{\crit}} \rvert^{1/2} + O \left( \lvert s - x^*_{\tau_{\crit}} \rvert^{\alpha} \right), \quad \text{where} \quad \alpha =
    \begin{cases}
      3/4 & \text{if $k = 1$}, \\
      1 & \text{otherwise}.
    \end{cases}
  \end{aligned}
\end{equation}
As $s \to\left(x_{\tau_{\crit}}^*\right)_-$, we have similarly ($\alpha$ is defined in \eqref{eq:edge_last})
\begin{equation}
  \begin{split}
    \psi_{\tau_{\crit}}(s) = {}& -\frac{1}{\pi} \Im G_{\tau_{\crit}}(s) \\
    = {}& \frac{1}{\pi} \sum_{j=1}^{2k} g_j
    \Im \left[ \sum^{2k - 1}_{l = 1} f_l (s - x^*_{\tau})^{l/2} + \sum^{\infty}_{l = 2k} (f_{l - 1/2} (s - x^*_{\tau})^{l/2 - 1/4} + f_l (s - x^*_{\tau})^{l/2}) \right]^j \\
    & + \frac{1}{\pi} g_{2k + 1/2} \Im \left[f_1 (s - x_{\tau}^*)^{1/2} + O \left( (s - x^*)^{\alpha} \right) \right]^{2k + 1/2} +
    O( \lvert s - x_{\tau}^* \rvert^{2k + 1/2}) \\
    = {}& \frac{1}{\pi} g_1 \Im(f_{2k - 1/2} (s - x^*_{\tau})^{k - 1/4}) + O( \lvert s - x_{\tau}^* \rvert^k) \\
    = {}& \frac{c^{2k + 3/2}_0}{2(\tau_{\crit} g_2)^{k + 3/4}} \lvert s - x^*_{\tau} \rvert^{k - 1/4} + O( \lvert s - x_{\tau}^* \rvert^k).
  \end{split}
\end{equation}
Thus we proved Theorem \ref{thm:macrocritical}\ref{enu:thm:macrocritical_V}.

\section{Proof of Lemma \ref{lemma12}}\label{sec:prooflemma}

\begin{proof}
The equilibrium measure $\mu_0$ satisfies for some constant $\ell$,
\begin{equation} \label{eq:ELequation}
	2 \int \log |x-s| d\mu_0(s) = V(x) + \ell \qquad \text{on the support of $\mu_0$} 
	\end{equation}
which is one of the Euler-Lagrange variational conditions for the minimization problem
for \eqref{eq:logenergy}, see \cite{Deift99, Saff-Totik97}.
If we define the $g$-function as
\begin{equation} \label{def:gs} 
g(z) = \int \log(z-s) d\mu_0(s) ,
\end{equation}
this means that $g_+ + g_- = V + \ell$ on the support of $\mu_0$.
It follows that $V-g+ \ell$ is the analytic continuation of $g$ across any interval
in the support of $\mu_0$ (recall that $V$ is real analytic). 

For every $s$ that is interior to $\supp(\mu_0)$ we have
convergent power series
\[ g(z) = \sum_{l=0}^{\infty} \frac{g_\pm^{(l)}(s)}{l!} (z-s)^l, \qquad \pm \Im z > 0 \]
for $z$ near $s$, with
\begin{equation} \label{eq:g+g-sum} 
	g_+^{(l)}(s) + g_-^{(l)}(s) = V^{(l)}(s) \qquad \text{for } l \geq 1. 
	\end{equation}
Taking derivatives of \eqref{def:gs} we get 
\begin{equation} \label{eq:glz} 
	g^{(l)}(z) = - (l-1)! \int \frac{d\mu_0(s)}{(s-z)^l}, \qquad l \geq 1. 
	\end{equation}
Since the density of $\mu_0$ vanishes with an exponent $2k$ or $2k + 1/2$ at $x^*$,
 we see by taking $z \to x^*$ in \eqref{eq:glz}
\begin{equation} \label{eq:g+isg-} g^{(l)}_+(x^*) = g^{(l)}_-(x^*) = - (l-1)! \int \frac{d\mu_0(s)}{(s-x^*)^l},
	\qquad 1 \leq l \leq 2k. \end{equation}
We take the limit $s \to x^*$ in  \eqref{eq:g+g-sum} with $l \geq 2k$ and
combining this with \eqref{eq:g+isg-}, we 
obtain \eqref{eq:Vderivative}. 
\end{proof}

For future reference we note that the above proof shows that
$g_+^{(l)}(x^*) = \frac{1}{2} V^{(l)}(x^*)$ for $1 \leq l \leq 2k$ and 
\begin{equation} \label{eq:gexpansion} 
	g(z) = g_\pm(x^*) + \frac{1}{2} \sum_{l=1}^{2k} \frac{V^{(l)}(x^*)}{l!} (z-x^*)^l + O\left((z-x^*)^{2k+1}\right) 
	\end{equation}
	as $z \to x^*$ from $\mathbb C^{\pm}$, uniformly for $z$ in a full neighborhood of $x^*$.

\section{Proof of Theorem \ref{thm:multi}\ref{enu:thm:multi_a}} \label{section: proof main result (a)}

This section is devoted to the proof of part \ref{enu:thm:multi_a} of Theorem \ref{thm:multi}.

\subsection{Splitting of the double integral} \label{section:splitting}
We start from the double integral representation 
\eqref{eq:mult_kernel_tilde} of $\widetilde{K}^{\X}_n$.
Taking a fixed but large enough $R > 0$, we decompose the double integral  \eqref{eq:mult_kernel_tilde} as follows. Let
\begin{equation}
  \begin{split}
   \Sigma_{\loc} & = {} \text{line segment from $x^* - iR n^{-\gamma}$ to $x^* + iR n^{-\gamma}$}, \\
   \Sigma_{\rest} & = {}  (x^* + i \mathbb R) \setminus \Sigma_{\loc},
  \end{split}
\end{equation}
and denote the integrand in \eqref{eq:mult_kernel_tilde} for $\widetilde{K}^{\X}_n(x, y; t, t')$ by $f(z, w; x, y; t, t')$. Define
\begin{align}
 \widetilde{K}_{n,\loc}^{\X}(x, y; t, t')  & = {} \frac{n}{2\pi i \sqrt{tt'}} \int_{\Sigma_{\loc}} dz   \int_{x^*-R n^{-\gamma} }^{x^*+Rn^{-\gamma}} dw f(z, w; x, y; t, t'), \label{def:KnXloc} \\
 \widetilde{K}_{n, 1}^{\X}(x, y; t, t')  & = {} \frac{n}{2\pi i \sqrt{tt'}} \int_{\Sigma_{\rest}} dz   \int_{\mathbb R \setminus (x^*-R n^{-\gamma}, x^*+Rn^{-\gamma})} dw f(z, w; x, y; t, t'), 
 	\label{eq:KnX1} \\
 \widetilde{K}_{n, 2}^{\X}(x, y; t, t')  & = {} \frac{n}{2\pi i \sqrt{tt'}} \int_{\Sigma_{\loc}} dz   \int_{\mathbb R \setminus (x^*-R n^{-\gamma}, x^*+Rn^{-\gamma})} dw f(z, w; x, y; t, t'), 
 	\label{eq:KnX2} \\
 \widetilde{K}_{n, 3}^{\X}(x, y; t, t')  & = {} \frac{n}{2\pi i \sqrt{tt'}} \int_{\Sigma_{\rest}} dz   \int_{x^*-R n^{-\gamma}}^{x^*+Rn^{-\gamma}} dw f(z, w; x, y; t, t').
 	\label{eq:KnX3}
\end{align} 
Then we have the decomposition
\begin{equation} \label{eq:KnXsplit} 
  \widetilde{K}^{\X}_n \left(x, y; t, t' \right) = \widetilde{K}_{n, \loc}^{\X} \left(x, y; t, t' \right) + \widetilde{K}_{n, \rest}^{\X} \left( x, y; t, t' \right),
\end{equation}
where
\begin{equation} \label{eq:KnXrest}
  \widetilde{K}_{n, \rest}^{\X} \left( x, y; t, t' \right) = \sum^3_{j = 1} \widetilde{K}_{n, j}^{\X}(x, y; t, t').
\end{equation}

We prove the following result:
\begin{proposition} \label{prop:main}
  Fix a compact set $K \subset \mathbb R$ and let $u,v\in K$. Let $\widehat c_t$ and $\widehat x_t^*$ be as in \eqref{eq:constants multi}
  and let $R > 1$ be sufficiently large. Then there exists $\widehat{H}_n(u;t)$ such that the following hold.
  \begin{enumerate}[label=(\alph*)]
  \item \label{enu:prop:main_a}
    We have
  \begin{equation} \label{eq:KnXloclimit} 
    \lim_{n \to \infty} \frac{e^{-\widehat H_n(u; t) + \widehat H_n(v; t')}}{\sqrt{\widehat c_{t} \widehat c_{t'}} \,n^{\gamma}}  \widetilde{K}_{n,\loc}^{\X} \left( \widehat x_t^*+\frac{u}{\widehat c_t n^\gamma}, \widehat x_{t'}^*+\frac{v}{\widehat c_{t'} n^\gamma}; t, t' \right) = \K_{\kappa}(u, v),
  \end{equation}
  uniformly for $u, v \in K$, and
\item \label{enu:prop:main_b}
  there exists a constant $c=c_R>0$ such that
  \begin{equation} \label{eq:KnXrestestimate}
    e^{- \widehat H_n(u; t) + \widehat H_n(v; t')} \left\lvert \widetilde{K}_{n, \rest}^{\X} \left( \widehat x_t^*+\frac{u}{\widehat c_t n^\gamma}, \widehat x_{t'}^*+\frac{v}{\widehat c_{t'} n^\gamma}; t, t' \right)   \right\rvert \leq e^{-c n^{1-2\gamma}}
  \end{equation}
  for $n$ sufficiently large.
  \end{enumerate}
\end{proposition}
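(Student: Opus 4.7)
I would rescale $z=x^*+\zeta/(c_0 n^\gamma)$ and $w=x^*+\omega/(c_0 n^\gamma)$ in \eqref{def:KnXloc}, so $\zeta\in i[-Rc_0,Rc_0]$ and $\omega\in[-Rc_0,Rc_0]$. By Proposition \ref{prop:prop17}, $(c_0 n^\gamma)^{-1}K_n^M(z,w)\to\K_{\kappa}(\zeta,\omega)$ uniformly on these complex-compact sets. The remaining factor $e^{\frac{n}{2}(V(z)-V(w))+\frac{n(x-(1-t)z)^2}{2t(1-t)}-\frac{n(y-(1-t')w)^2}{2t'(1-t')}}$ is analyzed by Taylor expanding $V$ and the Gaussian quadratics in $z$ and $w$ around $x^*$. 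Using Lemma \ref{lemma12}, in particular $V'(x^*)=2G_{\mu_0}(x^*)$ and $V''(x^*)=-2(1-t_{\crit})/t_{\crit}$, the $O(n^{1-\gamma})$ terms linear in $\zeta$ coming from $\frac{n}{2}V(z)$ and from $\frac{n(x-(1-t)z)^2}{2t(1-t)}$ cancel exactly once $x$ is centered at $\widehat x_t^*$, and the surviving $\zeta$-dependent terms collapse, after completion of the square, to $\alpha_t(\zeta-u)^2$ with $\alpha_t=\frac{n^{1-2\gamma}(t_{\crit}-t)}{2c_0^2 t t_{\crit}}>0$, up to errors $O(n^{1-3\gamma})$ uniformly in $\zeta$ on the local contour. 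The parallel reduction on the $w$-side yields $-\alpha_{t'}(\omega-v)^2$.

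\textbf{Gaussian integration.} All $\zeta,\omega$-independent contributions, including the $u$- and $v$-dependent terms of orders $n^{1-j\gamma}$ for $j=1,\ldots,2k+1$, are absorbed into a gauge $\widehat H_n(u;t)$ whose explicit form reflects the Taylor coefficients of $V$ at $x^*$ and agrees with Remark \ref{remark: H}. After dividing by $e^{\widehat H_n(u;t)-\widehat H_n(v;t')}$, the integrand becomes $c_0 n^\gamma\K_{\kappa}(\zeta,\omega)\,e^{\alpha_t(\zeta-u)^2-\alpha_{t'}(\omega-v)^2}(1+o(1))$. Since $\K_{\kappa}$ is entire in both arguments, I would shift the $\zeta$-contour to $u+i\mathbb R$ and the $\omega$-contour to $v+\mathbb R$ before taking $n\to\infty$; both shifts remain inside the local window once $R$ is sufficiently large. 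On the shifted contours the Gaussians concentrate on scale $n^{\gamma-1/2}$, a Laplace argument replaces $\K_{\kappa}(\zeta,\omega)$ by $\K_{\kappa}(u,v)$ with $o(1)$ error, and the two residual Gaussian integrals evaluate to $i\sqrt{\pi/\alpha_t}$ and $\sqrt{\pi/\alpha_{t'}}$. Collecting $\frac{n}{2\pi i\sqrt{tt'}}$, the Jacobian $(c_0 n^\gamma)^{-2}$, the $c_0 n^\gamma$ factor from Proposition \ref{prop:prop17}, and the two Gaussian widths produces precisely $\sqrt{\widehat c_t\widehat c_{t'}}\,n^\gamma\K_{\kappa}(u,v)$ via $\widehat c_t=c_0 t_{\crit}/(t_{\crit}-t)$, establishing \eqref{eq:KnXloclimit}.

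\textbf{Part (b).} Each tail piece acquires a factor $e^{-cn^{1-2\gamma}}$ that dominates the gauge $e^{-\widehat H_n(u;t)+\widehat H_n(v;t')}$ of at most polynomial-exponential size. For $\widetilde K^{\X}_{n,3}$, $\lvert\Im z\rvert\geq Rn^{-\gamma}$ on $\Sigma_{\rest}$, so after absorbing the constant $\frac{ntV'(x^*)^2}{8(1-t)}$ into the gauge, the Gaussian factor $e^{\frac{n(x-(1-t)z)^2}{2t(1-t)}}$ is bounded by $e^{-cR^2 n^{1-2\gamma}}$, while $K_n^M(z,w)e^{\frac{n}{2}(V(z)-V(w))}$ is controlled by standard Riemann--Hilbert estimates \cite{Deift-Kriecherbauer-McLaughlin-Venakides-Zhou99}. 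For $\widetilde K^{\X}_{n,2}$, with $w$ outside the local window, outside $\supp(\mu_0)$ the weight $e^{-nV(w)}$ delivers decay, whereas inside $\supp(\mu_0)$ one deforms the $w$-integration into the complex plane using the $g$-function \eqref{def:gs} and the variational equality \eqref{eq:ELequation}, so the real part of the effective phase strictly decreases by a quantity exceeding $cn^{-2\gamma}$, yielding $e^{-cn^{1-2\gamma}}$ again. Finally $\widetilde K^{\X}_{n,1}$ inherits both suppressions.

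\textbf{Main obstacle.} The delicate step is the $w$-integration over $\supp(\mu_0)\setminus(x^*-Rn^{-\gamma},x^*+Rn^{-\gamma})$ in $\widetilde K^{\X}_{n,2}$: unlike the $z$-direction, no intrinsic Gaussian suppression is available, and one must genuinely deform into the complex plane. The higher-order vanishing of $\psi_0$ at $x^*$ (Assumption \ref{assumption:b}) ensures the effective phase has strictly negative real part off the real axis near $x^*$; combined with the quadratic separation of the saddle from any competing saddle away from $x^*$, this produces the quantitative $e^{-cn^{1-2\gamma}}$ bound uniformly for $u,v$ in a compact set.
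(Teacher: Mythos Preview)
Your overall architecture matches the paper's, but there are two genuine gaps, one in each part.

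\textbf{Part (a).} Centering the completed square at $u$ rather than at the exact saddle $s_n(u,t)=u+O(n^{-\gamma})$ is not innocuous when $\gamma<1/3$. After your Taylor expansion the exponent is $\alpha_t(\zeta-u)^2 + n^{1-3\gamma}R_n(\zeta)$ with $R_n(\zeta)=\tfrac{V'''(x^*)}{12c_0^3}\zeta^3+O(n^{-\gamma}\zeta^4)$; absorbing the value $R_n(u)$ into the gauge still leaves a linear piece $n^{1-3\gamma}R_n'(u)(\zeta-u)$ in the exponent. On the Laplace concentration scale $|\zeta-u|\sim\alpha_t^{-1/2}\sim n^{-(1-2\gamma)/2}$ this is of order $n^{(1-4\gamma)/2}$, which diverges once $\gamma<1/4$ (i.e.\ $\kappa>3$), so your ``$(1+o(1))$'' is unjustified there. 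The paper fixes this by recentering at the true saddle $s_n(u,t)$ defined by $\partial_\zeta\Phi_n=0$: then the remainder $\Psi_n(\zeta,u;t)$ vanishes to second order at $\zeta=s_n$, so $n^{1-3\gamma}\Psi_n=O\!\big(n^{1-3\gamma}(\zeta-s_n)^2\big)=O(n^{-\gamma})$ on the concentration scale, and the Laplace argument goes through for all $\kappa$.

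\textbf{Part (b).} Two mechanisms are misidentified. First, in the $z$-direction your claim that ``$K_n^M(z,w)e^{\frac{n}{2}(V(z)-V(w))}$ is controlled by standard Riemann--Hilbert estimates'' hides the essential point: Lemma~\ref{lem:Y} gives $|[Y_n^{-1}(z)Y_n(w)]_{2,2}|\le Cn^{\gamma}|e^{n(g(z)-g(w))}|$, and $\Re[g(x^*+i\zeta)-g_+(x^*)]\le \zeta^2/(2\tau_{\crit})$ is \emph{growth}, not a bound. The net exponent in $\zeta$ is $-\tfrac12\big(\tfrac{1-t}{t}-\tfrac{1}{\tau_{\crit}}\big)\zeta^2$, which is negative precisely because $t<t_{\crit}$; this is where subcriticality enters, and it cannot be replaced by a generic appeal to RH bounds. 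Second, for the $w$-deformation you cannot simply push the original integrand off the real axis: the Gaussian $e^{-\frac{n(y-(1-t')w)^2}{2t'(1-t')}}$ grows once $\Im w\neq 0$, and $e^{-nV(w)}$ is only analytic in a strip. The paper's substantive step is the algebraic identity \eqref{eq:KnPEinY}, which rewrites $K_n^{\PE}(z,w)$ in terms of the \emph{second column} of $Y_n(w)$; those entries are analytic on $\mathbb C\setminus\mathbb R$ and decay like $w^{-n}$, so the $w$-integral can be moved to the rays $\Gamma_j$ at angles $\pm\pi/6$, $\pm5\pi/6$. On those rays the needed lower bound $\Re f(w)>-Cn^{-3\gamma}$ (Lemma~\ref{lem:est_J}) follows from an explicit computation with the integral representation of $g$ against $\mu_0$, not from the local vanishing of $\psi_0$ alone.
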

Recall that $\gamma \leq 1/3$, so that $1-2\gamma > 0$. 
Then it is clear that \eqref{eq:Knlimit_multi} and thus Theorem \ref{thm:multi}\ref{enu:thm:multi_a} immediately follow from Proposition \ref{prop:main}.

\subsection{Proof of Proposition \ref{prop:main}\ref{enu:prop:main_a}} \label{subsec:kernel_asy_loc} 

We write
\[x=\widehat x_{t}^*+\frac{u}{\widehat c_{t} n^\gamma},\qquad y=\widehat x_{t'}^*+\frac{v}{\widehat c_{t'} n^\gamma}. \]
After a change of variables $\xi = c_0 n^{\gamma} (z - x^*)$, $\eta = c_0 n^{\gamma} (w - x^*)$, we obtain from  \eqref{def:KnXloc}, \eqref{eq:mult_kernel_tilde}
and \eqref{eq:xtauV} that
\begin{multline} \label{eq:KnXscaled1}
  \widetilde{K}_{n, \loc}^{\X} \left( x,y; t, t' \right) \\= 
  \frac{n^{1-\gamma}}{2\pi i \sqrt{t t'} c_0 } \int_{-i c_0R}^{i c_0R}  d\xi  \int_{-c_0R}^{c_0R} d\eta 
  \frac{1}{c_0n^{\gamma}} 
  K^M_n \left(x^* + \frac{\xi}{c_0n^{\gamma}}, x^* + \frac{\eta}{c_0n^{\gamma}} \right) 
  \frac{e^{\Phi_n(\xi,u; t)}}{e^{\Phi_n(\eta,v; t')}},
\end{multline}
where 
\begin{equation} \label{def:Fnsx} 
  \Phi_n(\xi,u; t) = \frac{n}{2} V\left(x^* + \frac{\xi}{c_0n^{\gamma}}\right) + \frac{n }{2t(1-t)} \left( t\frac{V'(x^*)}{2} + \frac{u}{\widehat c_t n^{\gamma}}-\frac{(1-t)\xi}{c_0n^\gamma} \right)^2. 
\end{equation}
The proof is based on a saddle point analysis of \eqref{eq:KnXscaled1} and \eqref{def:Fnsx}.  

Applying a Taylor expansion of $V$ at $x^*$ up to the second order in \eqref{def:Fnsx} and using \eqref{eq:constants multi}, we obtain
\begin{multline} \label{eq:Fnsx}
    \Phi_n(\xi,u; t) =  \frac{n}{2} V(x^*) + \frac{t n}{8(1-t)} V'(x^*)^2 + \frac{u n^{1-\gamma}}{2\widehat c_{t}(1-t)} V'(x^*) + \frac{u^2n^{1-2\gamma}}{4c_0 \widehat c_{t}(1-t)} V''(x^*) \\
    + \frac{n^{1-2\gamma}}{2 t c_0 \widehat c_{t}} (\xi - u)^2 +  n^{1 - 3\gamma} R_n(\xi),
\end{multline}
where the remainder $R_n$ is 
\begin{equation}  \label{eq:Rnxi}
	R_n(\xi) =  \frac{n^{3\gamma}}{2}\left(V\left(x^*+\frac{\xi}{c_0n^\gamma}\right)-V(x^*)
	-V'(x^*)\frac{\xi}{c_0n^\gamma}-V''(x^*)\frac{\xi^2}{2c_0^2n^{2\gamma}}\right).
\end{equation}
Then  $R_n$ is an analytic function in a neighorbood of $\xi=0$ that grows with $n$, with a Taylor expansion
\begin{equation} \label{eq:TaylorRn}
	R_n(\xi) = \frac{1}{2} \sum^{\infty}_{k = 3} \frac{V^{(k)}(x^*)}{k!} \frac{\xi^k}{c^k_0 n^{(k - 3)\gamma}} .
\end{equation}
In particular, 
\begin{equation} \label{eq:Rnlimit}
	R_n(\xi) = \frac{V'''(x^*)}{12} \frac{\xi^3}{c^3_0} + O(n^{-\gamma} \xi^4)
	\qquad \text{ as } n \to \infty,
\end{equation}
uniformly $\xi$ in compact subsets of $\mathbb C$.
The first four terms on the right of \eqref{eq:Fnsx} do not depend on $\xi$ and they will contribute to the gauge factor $\widehat{H}_n$, see \eqref{eq:Hnhat} below. 

Because of \eqref{eq:Fnsx} the saddle point equation $\frac{\partial \Phi_n}{\partial \xi} = 0$ has a solution at a value close to $u$. We denote it by $s_n(u,t)$ and it is such that 
\begin{equation} \label{eq:snut} 
	\frac{n^{1-2\gamma}}{tc_0 \widehat{c}_t} (s_n(u,t) -u) + n^{1-3\gamma} R_n'(s_n(u,t)) = 0. 
	\end{equation}
From \eqref{eq:TaylorRn}, \eqref{eq:Rnlimit} and \eqref{eq:snut}, we find that
\begin{equation} \label{eq:snutlimit} 
	s_n(u,t) = u  - \frac{t \widehat{c}_t  V'''(x^*)}{4c_0 n^{\gamma}} u^2 + O(n^{-2\gamma} u^3) \qquad \text{ as } n \to \infty. 
	\end{equation}
Then we have
\begin{equation} \label{eq:Rnrewrite} 
	\frac{n^{1-2\gamma}}{2tc_0 \widehat{c}_t} (\xi-u)^2 + n^{1-3\gamma} R_n(\xi) =
	\frac{n^{1-2\gamma}}{2tc_0 \widehat{c}_t} (\xi-s_n(u,t))^2 + n^{1-3\gamma} \widehat{R}_n(\xi,u,t) \end{equation}
where
\begin{equation} \label{eq:Rnhat}
	\widehat{R}_n(\xi, u,t)
	= R_n(\xi) + \frac{n^{\gamma}}{tc_0 \widehat{c}_t} (s_n(u,t) - u)(\xi-u) 
	- \frac{n^{\gamma}}{2tc_0 \widehat{c}_t} (s_n(u,t) - u)^2.  \end{equation}

We define
\begin{equation} \label{eq:Hnhat}
	\widehat{H}_n(u,t) = 
	 \frac{t n}{8(1-t)} V'(x^*)^2 + \frac{u n^{1-\gamma}}{2\widehat c_{t}(1-t)} V'(x^*) + \frac{u^2n^{1-2\gamma}}{4c_0 \widehat c_{t}(1-t)} V''(x^*)	
	 + n^{1-3\gamma} \widehat{R}_n(s_n(u,t), u,t).
	 \end{equation}
Then by \eqref{eq:KnXscaled1}, \eqref{eq:Fnsx}, \eqref{eq:Rnrewrite}, \eqref{eq:Rnhat} and \eqref{eq:Hnhat} we obtain
\begin{multline} \label{eq:KnXscaled2}
  \frac{e^{-\widehat H_{n}(u; t)+ \widehat H_{n}(v; t')}}{\sqrt{\widehat c_{t} \widehat c_{t'}} n^{\gamma}} \tilde{K}_{n, \loc}^{\X} \left(x,y; t, t' \right) \\
  = \frac{n^{1-2\gamma}}{2\pi i \sqrt{t t'} c_0 \sqrt{\widehat{c}_t \widehat{c}_{t'}}}  \int_{-i c_0R}^{i c_0R}  d\xi 
  \int_{-c_0R}^{c_0R} d\eta  
  \frac{1}{c_0n^{\gamma}} K_n^M \left(x^* + \frac{\xi}{c_0 n^{\gamma}}, x^* + \frac{\eta}{c_0 n^{\gamma}} \right) \\
  \times  
   \exp \left( \frac{n^{1-2\gamma}}{2t c_0 \widehat c_{t}} \left( \xi - s_n(u,t) \right)^2  -  \frac{n^{1-2\gamma}}{2t' c_0 \widehat c_{t'}} \left( \eta - s_n(v,t') \right)^2 \right) \frac{e^{n^{1 - 3\gamma} \Psi_n(\xi, u; t)}}{e^{n^{1 - 3\gamma} \Psi_n(\eta, v; t')}},
\end{multline}
where
\begin{equation} \label{def:Psin} \Psi_n(\xi,u;t) = \widehat{R}_n(\xi,u,t) - \widehat{R}_n(s_n(u,t), u,t). 
\end{equation}
It is obvious that 
\begin{equation} \label{eq:Psinzero}
	\Psi_n(s_n(u,t), u;t) = 
	\left. \frac{\partial \Psi_n}{\partial\xi} (\xi, u; t) \right\rvert_{\xi = s_n(u,t)}  = 0
	\end{equation}
because of the choice of $s_n(u,t)$ as the saddle point of $\Phi_n$, see \eqref{eq:snut} and \eqref{eq:Rnhat}. In addition by \eqref{def:Psin}, \eqref{eq:Rnhat} and \eqref{eq:TaylorRn}, we have
\begin{equation} \label{eq:Psin2bound} 
	\frac{\partial^2 \Psi_n}{\partial \xi^2} = R_n''(\xi) = 
	\frac{V'''(x^*)}{4c_0^3} \xi + O(n^{-\gamma} \xi^2)\end{equation}
as $n \to \infty$, uniformly for $\xi$ in compacts. 

We assume $R$ is large enough, so that
\[ c_0 R > 3 \max(|u|, |v|). \]
Then, if $n$ is large enough, the interval $[-c_0R, c_0R]$ contains the saddle point
$s_n(v,t')$ for the $\eta$-integral in \eqref{eq:KnXscaled2},
see \eqref{eq:snutlimit}. In addition, by analyticity, the $\xi$-integral in \eqref{eq:KnXscaled2} can be deformed to a path of descent passing through the saddle $s_n(u,t)$.

It follows from \eqref{eq:Psinzero} and \eqref{eq:Psin2bound} that
there exists $C > 0$ such that
\[ |\Psi_n(\xi,u;t)| \leq C (\xi-s_n(u,t))^2,
	\quad |\Psi_n(\eta,v;t')| \leq C (\eta-s_n(v,t'))^2, \]
for all $\eta \in [-c_0R, c_0R]$ and all $\xi$ on the (new) path of integration.  
Furthermore, by Proposition \ref{prop:prop17} the rescaled  kernel
in \eqref{eq:KnXscaled2} tends to $\K_{\kappa}(\xi,\eta)$ as $n \to \infty$,
uniformly for $\xi$ and $\eta$ in compacts.
Then by a standard saddle point approximation,  
we conclude that \eqref{eq:KnXscaled2}  has the same limit
as $\K_{\kappa}(s_n(u,t), s_n(v,t'))$ as $n \to \infty$, 
which is $\K_{\kappa}(u,v)$ because of  \eqref{eq:snutlimit}.
This completes the proof of Proposition \ref{prop:main}\ref{enu:prop:main_a}.

\subsection{Proof of Proposition \ref{prop:main}\ref{enu:prop:main_b}} \label{subsec:kernel_rest} 

\subsubsection{Preliminaries: deformation of the $w$-contour}

We first take a closer look at the correlation kernel $K_n^M$
for the eigenvalues of the unitary invariant random matrix $M$, see \eqref{def:KnM} . Because of the Christoffel-Darboux
formula for orthogonal polynomials we can write
\[ K_n^M(x,y) = e^{-\frac{n}{2}(V(x) + V(y))} \frac{\kappa_{n-1,n}}{\kappa_{n,n}} 
	\frac{p_{n,n}(x) p_{n-1,n}(y) - p_{n-1,n}(x) p_{n,n}(y)}{x-y} \]
where $\kappa_{j,n}$ is the leading coefficient of $p_{j,n}$ for $j=n-1,n$. 
The kernel can be further expressed as
\begin{equation} \label{eq:KnMinYn} 
	K_n^M(x,y) = \frac{e^{-\frac{n}{2}(V(x)+V(y))}}{2\pi i(x-y)} 
	\left[ Y_n^{-1}(y) Y_n(x) \right]_{2,1}, 
	\end{equation}
	where
$Y_n : \mathbb C \setminus \mathbb R \to \mathbb C^{2 \times 2}$ is given by
\begin{equation} \label{def:Yn} 
Y_n(z) = \begin{pmatrix} \kappa_{n,n}^{-1} p_{n,n}(z) & 
\ds \frac{\kappa_{n,n}^{-1}}{2\pi i} \int_{-\infty}^{\infty} \frac{p_{n,n}(s) e^{-nV(s)}}{s-z} ds \\
- 2 \pi i \kappa_{n-1,n} p_{n-1,n}(z) & \ds -\kappa_{n-1,n} \int_{-\infty}^{\infty} \frac{p_{n-1,n}(s) e^{-nV(s)}}{s-z} ds
\end{pmatrix}. \end{equation} 
The right-hand side of \eqref{eq:KnMinYn} only involves entries in the first column of $Y_n$, which are polynomials and can thus be evaluated at real points $x$ and $y$ without ambiguity.
The matrix-valued function $Y_n$ is the solution of the Riemann--Hilbert problem for orthogonal polynomials
\cite{Deift99, Fokas-Its-Kitaev92}. 

An alternative expression of $K^M_n(x, y)$, and equivalently of $K^{\PE}_n(x, y)$, is derived in \cite[Formula (3.20)]{Claeys-Kuijlaars-Wang15} in the following way. As a consequence of the jump condition satisfied by $Y_n$ for $x\in\mathbb R$,
$Y_{n,+}(x) = Y_{n,-}(x) \begin{pmatrix} 1 & e^{-nV(x)} \\ 0 & 1 \end{pmatrix}$, where $Y_{n,\pm}(x)$ denote the boundary values from above ($+$) and from below ($-$), we have that
\[
Y_{n,-}(w) \begin{pmatrix} 1 \\ 0 \end{pmatrix}
= e^{nV(w)} \left( Y_{n,+}(w) - Y_{n,-}(w) \right)
\begin{pmatrix} 0 \\ 1 \end{pmatrix}
\]
for $w\in\mathbb R$.
Using this in \eqref{def:KnPE} and \eqref{eq:KnMinYn}, we obtain the formula
  \begin{align} \nonumber
    K_n^{\text{PE}}(z, w) = {}& K_n^M(z, w) e^{\frac{n}{2}(V(z) - V(w))} \\
    = {}& \frac{-1}{2\pi i(z - w)}
    \begin{pmatrix} 0 & 1 \\ \end{pmatrix}
    Y_n^{-1}(z) \left( Y_{n,+}(w) - Y_{n,-}(w) \right)
    \begin{pmatrix} 0 \\ 1 \end{pmatrix} \nonumber \\
    & = \frac{-1}{2\pi i(z - w)} \left(
        \left[ Y_n^{-1}(z) Y_{n,+}(w) \right]_{2,2}
         - \left[ Y_n^{-1}(z) Y_{n,-}(w) \right]_{2,2} \right),
    \label{eq:KnPEinY} 
  \end{align} 
which holds for $z\in\mathbb C$ and $w\in\mathbb R$.
	
Our strategy  is  to deform the contour for the $w$-integral in
\eqref{eq:KnX1} and \eqref{eq:KnX2} into the complex plane, and then utilize the asymptotic results 
from the Riemann--Hilbert analysis for \eqref{def:Yn} from \cite{Deift-Kriecherbauer-McLaughlin-Venakides-Zhou99}. 
The deformation of contours is somewhat similar to the ``opening of the lens'' step 
in the Deift--Zhou nonlinear steepest descent method for Riemann-Hilbert problems. It enables  
us to move certain integrals from the real line, where the integrands are oscillatory, 
to contours in the complex plane where the integrands are small as $n \to \infty$.

Let $\Gamma_j$, $j=1,2,3,4$, denote the four half rays from $x^* \pm R n^{-\gamma}$ that
make angles $\pm \pi/6$ with the positive or negative real axis, namely
\begin{equation} \label{def:Gammaj}
	\Gamma_j = \{ w \in \mathbb C \mid \arg [w - (x^* \pm Rn^{-\gamma})] = \theta_j \}, \qquad j=1,2,3,4,
	\end{equation}
with $\theta_1 = \pi/6$, $\theta_2 = 5 \pi/6$, $\theta_3 = -5 \pi/6$, $\theta_4 = -\pi/6$, and the $\pm$ sign is positive for $j = 1, 4$ and negative for $j = 2, 3$.
All contours $\Gamma_j$ are oriented from left to right. See Figure \ref{fig:Gamma_j} for an illustration.
\begin{figure}[htb]
  \centering
  \includegraphics{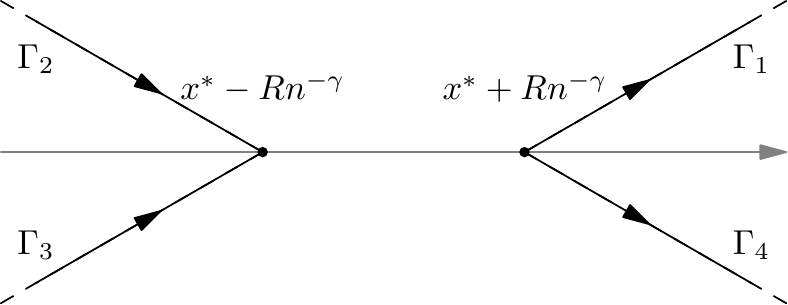}
  \caption{The shape and orientation of $\Gamma_j$ ($j = 1, 2, 3, 4$).}
  \label{fig:Gamma_j}
\end{figure}

We also put 
\begin{equation} \label{def:Gamma}
  \Gamma = (-\Gamma_1) \cup (-\Gamma_2) \cup \Gamma_3 \cup \Gamma_4,
\end{equation}
where the negative sign means the orientation of the contour is reversed.

Using \eqref{eq:KnPEinY}, for $z \in \mathbb C \setminus [x^* + Rn^{-\gamma}, +\infty)$, we have
\begin{multline} \label{eq:first_deformation_Gamma} 
  \int_{x^* + Rn^{-\gamma}}^{\infty} K_n^{\text{PE}}(z, w) e^{-\frac{n(y - (1-t')w)^2}{2t'(1-t')}} dw  \\
  = -\frac{1}{2\pi i} \int_{x^* + Rn^{-\gamma}}^{\infty}
  \frac{1}{z - w} \left[Y_n^{-1}(z) Y_{n,+}(w) \right]_{2,2}  e^{-\frac{n(y - (1-t')w)^2}{2t'(1-t')}} dw \\
  +\frac{1}{2\pi i} \int_{x^* + Rn^{-\gamma}}^{\infty}
  \frac{1}{z - w} \left[ Y_n^{-1}(z) Y_{n,-}(w) \right]_{2,2} 
  e^{-\frac{n(y - (1-t')w)^2}{2t'(1-t')}} dw,
\end{multline}
and by analyticity we are allowed to deform contours. Since the two entries of the second column of $Y_n(w)$ are $O(w^{-n})$ as $w \to \infty$ (this follows from the orthogonality relations), we can move the first integral 
to $\Gamma_1$ and the second integral to $\Gamma_4$. 
We similarly can consider the above integral with $(-\infty, x^* - Rn^{-\gamma})$ as the interval of
integration, and deform the integral to $\Gamma_2 \cup \Gamma_3$. Then integrating over $z \in \Sigma_{\rest}$ or $z \in \Sigma_{\loc}$, we have for $j=1, 2$, see \eqref{eq:KnX1}, \eqref{eq:KnX2}, and \eqref{eq:mult_kernel_tildePE}, 
\begin{equation} \label{def:KnXrestI} 
  \widetilde{K}^{\X}_{n, j}(x, y; t, t') = \frac{n}{(2\pi i)^2 \sqrt{tt'}} \int_{\Sigma_{*}} dz \int_{\Gamma} dw \frac{1}{z - w} \left[ Y_n^{-1}(z) Y_n(w) \right]_{2,2} e^{\frac{n(x - (1-t)z)^2}{2t(1-t)} -
  \frac{n (y - (1-t')w)^2}{2t'(1-t')}} 
\end{equation}
where $\Sigma_*=\Sigma_{\rest}$ for $j=1$ and $\Sigma_*=\Sigma_{\loc}$ for $j=2$.
Next, by \eqref{eq:KnX3} and \eqref{eq:KnPEinY}, we can write 
\begin{equation}
  \widetilde{K}^{\X}_{n, 3}(x, y; t, t') = 
  \widetilde{K}^{\X}_{n, 3+}(x, y; t, t') - \widetilde{K}^{\X}_{n, 3-}(x, y; t, t'),
\end{equation}
where
\begin{multline} \label{def:KnXrestI_3+-} 
  \widetilde{K}^{\X}_{n, 3\pm}(x, y; t, t') \\= \frac{\pm n}{(2\pi i)^2 \sqrt{tt'}} \int_{\Sigma_{\rest}} dz \int^{x^* + Rn^{-\gamma}}_{x^* - Rn^{-\gamma}} dw \frac{1}{z - w} \left[ Y_n^{-1}(z) Y_{n, \pm}(w) \right]_{2,2}  e^{\frac{n(x - (1-t)z)^2}{2t(1-t)} -
    \frac{n (y - (1-t')w)^2}{2t'(1-t')}}.
\end{multline}

\subsubsection{Proof of Proposition \ref{prop:main}\ref{enu:prop:main_b}}  \label{subsec:kernel_asy_rest}
In the paper \cite{Deift-Kriecherbauer-McLaughlin-Venakides-Zhou99}, the Riemann--Hilbert steepest descent method of Deift and Zhou was
 used to obtain large $n$ asymptotics for $Y_n$. This asymptotic analysis was completed also in 
 situations where the equilibrium measure $\mu_0$ has singular points, but without explicit 
 construction of local parametrices near the singular points; only their existence was proved. 
 Nevertheless, we can use the results from \cite{Deift-Kriecherbauer-McLaughlin-Venakides-Zhou99} to obtain an upper bound for 
 $\left[ Y_n^{-1}(z) Y_n(w) \right]_{2,2}$, which we will need
to estimate \eqref{def:KnXrestI} and \eqref{def:KnXrestI_3+-}.
Note that \eqref{eq:KnMinYn} contains the $(2,1)$-entry of $Y_n^{-1}(z) Y_n(w)$
while we are now interested in the $(2,2)$-entry, which appears in \eqref{def:KnXrestI}
and \eqref{def:KnXrestI_3+-}. 

\begin{lemma} \label{lem:Y}
  Let $V$ be real analytic and such that $\frac{V(x)}{\log(1+|x|)}\to +\infty$ as $x\to \pm\infty$. 
  Let $x^*$ be a singular interior or edge point with exponent $\kappa$ and $\gamma = (\kappa+1)^{-1}$.
  Then there exists a constant $C > 0$ such that for large enough $n$, we have the inequality
  \begin{equation} \label{eq:Yassump}  
  \left\lvert \left[ Y_n^{-1}(z) Y_n(w) \right]_{2,2} \right\rvert  \leq C n^{\gamma}
  	\left\lvert e^{n(g(z)-g(w))}\right\rvert,
  \end{equation}
  for all $z \in x^* + i\mathbb R$ and  $w \in \Gamma \cup (x^* - R n^{-\gamma}, x^* + R n^{-\gamma})$
  with $\Gamma$ given by \eqref{def:Gamma}, and $g$ is defined in \eqref{def:gs}. For $w \in (x^* - R n^{-\gamma}, x^* + R n^{-\gamma})$, $Y_n(w)$ in \eqref{eq:Yassump} is understood as either $Y_{n, +}(w)$ or $Y_{n, -}(w)$, with $g(w)$ meaning $g_{\pm}(w)$ accordingly.
\end{lemma}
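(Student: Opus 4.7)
The plan is to unravel the Deift--Zhou steepest descent analysis for the RH problem satisfied by $Y_n$ in \cite{Deift-Kriecherbauer-McLaughlin-Venakides-Zhou99}, using the local parametrix at a singular point constructed in Appendix \ref{sec:RHP_summary}. First I would introduce the $g$-function transformation: with $\sigma_3=\diag(1,-1)$ and $\ell$ the Euler--Lagrange constant from \eqref{eq:ELequation}, set $T(z)=e^{-n\ell\sigma_3/2}\,Y_n(z)\,e^{-n(g(z)-\ell/2)\sigma_3}$. A direct calculation gives
\[
\bigl[Y_n^{-1}(z)Y_n(w)\bigr]_{2,2}=e^{n(g(z)-g(w))}\bigl[T^{-1}(z)T(w)\bigr]_{2,2},
\]
so \eqref{eq:Yassump} reduces to the uniform estimate $|[T^{-1}(z)T(w)]_{2,2}|\leq Cn^\gamma$ on the prescribed set.

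Next I would open the lens on each band of the support to produce $S$, and then use the factorization $S(z)=R(z)P(z)$ coming from the asymptotic analysis: $P$ equals the outer parametrix $P^\infty$ outside a union of fixed-radius disks about the endpoints of the support and about $x^*$, equals the corresponding local parametrix inside each disk, and the small-norm RH problem for $R$ yields $R(z)=I+O(n^{-c})$ uniformly off its jump contour. The contours $\Gamma_j$ have been chosen at angles $\pm\pi/6$ precisely so that the off-diagonal jump factors $e^{\mp n(g_+(w)+g_-(w)-V(w)-\ell)}$ picked up by the lens transformation on $\Gamma$ stay uniformly bounded. When $z$ and $w$ both lie outside every disk, $P^\infty$ is bounded and so $[T^{-1}(z)T(w)]_{2,2}=O(1)$, in which case the desired inequality already holds with constant $C$.

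The factor $n^\gamma$ will arise when at least one of $z,w$ lies inside the disk around $x^*$. There the local parametrix has the form $P^{\mathrm{loc}}(z)=E_n(z)\,\Psi_\kappa(\zeta_z)$ with rescaled variable $\zeta_z=c_0 n^\gamma(z-x^*)(1+o(1))$, an analytic prefactor $E_n$ fixed by the matching condition with $P^\infty$ on the boundary of the disk, and $\Psi_\kappa$ the model solution of the singular RH problem of Appendix \ref{sec:RHP_summary}. I would then track how the $n$-dependent scalings in $E_n$ and the polynomial growth of $\Psi_\kappa$ at infinity combine in $[P^{-1}(z)P(w)]_{2,2}$; since the rescaled disk has radius $O(n^\gamma)$, the exponents conspire to give at worst one factor of $n^\gamma$ in the $(2,2)$-entry, matching the claimed bound. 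The hybrid case in which one of $z,w$ is inside the disk and the other outside is handled using the matching condition $P^{\mathrm{loc}}(P^\infty)^{-1}=I+o(1)$ on the boundary of the disk.

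The main obstacle will be making the growth bound for $\Psi_\kappa^{-1}(\zeta_z)\Psi_\kappa(\zeta_w)$ precise when $k\geq 2$, since no closed-form expression for $\Psi_\kappa$ is known and the argument must proceed solely from the jump structure and the prescribed polynomial growth at infinity established in Appendix \ref{sec:RHP_summary}. A secondary technical issue is that parts of $\Gamma$ may cross the disks around other, non-singular endpoints of $\supp(\mu_0)$; there the local parametrix is the standard Airy parametrix and the resulting bound on $\Gamma$ is $O(1)$ by the classical Airy asymptotics in the sectors of angle $\pm\pi/6$.
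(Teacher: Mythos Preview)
Your approach is essentially the same as the paper's: both unwind the transformations $Y_n\mapsto T\mapsto S=RP$ from \cite{Deift-Kriecherbauer-McLaughlin-Venakides-Zhou99} and read off the bound from the small-norm problem for $R$ together with boundedness or controlled growth of the parametrices.  The main organizational difference is that the paper does \emph{not} work with the product $T^{-1}(z)T(w)$; instead it uses $\det Y_n=1$ to reduce the $(2,2)$-entry of $Y_n^{-1}(z)Y_n(w)$ to a combination of first-column entries of $Y_n(z)$ and second-column entries of $Y_n(w)$, and then bounds these \emph{separately} by $C\,|e^{\pm ng}|$ (interior case) or $C\,n^{\gamma/2}|e^{\pm ng}|$ (edge case).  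Multiplying two factors of $n^{\gamma/2}$ gives the claimed $n^\gamma$.  This is cleaner than tracking the full product $P^{-1}(z)P(w)$ and avoids the issue you flag as your ``main obstacle'': one never needs a growth bound on $\Psi_\kappa^{-1}(\zeta_z)\Psi_\kappa(\zeta_w)$, only the boundedness of $\hat M_p$ in the interior case (built into its construction in \cite{Deift-Kriecherbauer-McLaughlin-Venakides-Zhou99}) and the $O(|\zeta|^{1/4})$ growth in the edge case, which follows directly from the normalization at infinity of the local RHP.

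Two small corrections.  First, the angles $\pm\pi/6$ on $\Gamma$ are \emph{not} chosen to control jump factors from the lens opening; in fact the paper simply arranges the lenses so that $\Gamma$, outside the disk $D_{\epsilon,x^*}$, lies in the outer region where no lens transformation is applied.  The angles $\pm\pi/6$ are instead chosen so that the identity \eqref{lem52:toprove} holds, which is what drives the estimate of $J_{n,\rest}$ in Lemma~\ref{lem:IJestimates}.  Second, near $x^*$ and inside the lens (for $|w-x^*|\leq 2Rn^{-\gamma}$) one does have to cross the lens boundary; the paper handles this by replacing $\hat M_p$ with its analytic continuation $\tilde M_p$ from the sector containing $i\mathbb R_+$, and observing that $\tilde M_p(\zeta(w))=O(1)$ there because $\zeta(w)$ stays in a compact set.
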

Because it is rather technical, the proof of Lemma \ref{lem:Y} is postponed to Appendix \ref{sec:RHP_summary}.

The factor $n^\gamma$ on the right-hand side of \eqref{eq:Yassump} is relevant only for the case of
an edge point, and it could be dropped for the case of a singular interior point, see \eqref{eq:ineq_1st_column}
and \eqref{eq:ineq_2nd_column}.

From \eqref{eq:Yassump} we conclude that
\[  \left\lvert \left[ Y_n^{-1}(z) Y_n(w) \right]_{2,2} \right\rvert  \leq  
	C n^{\gamma}\left\lvert e^{n(g(z)-g_+(x^*))} \right\rvert \cdot \left\lvert e^{-n(g(w)-g_+(x^*))} \right\rvert.
    \]

Using Lemma \ref{lem:Y} and the observation that $\lvert z - w \rvert \geq R n^{-\gamma}$ on the relevant integration contours in \eqref{def:KnXrestI} and \eqref{def:KnXrestI_3+-}, we obtain the estimates
\begin{align}
  \left\lvert \widetilde{K}^{\X}_{n, 1}(x, y; t, t') \right\rvert \leq {}& \frac{C n^{1+ 2\gamma}}{2 \pi^2 \sqrt{tt'} R}  I_{n, \rest}(x) J_{n, \rest}(y), \label{eq:PartIestimate} \\
  \left\lvert \widetilde{K}^{\X}_{n, 2}(x, y; t, t') \right\rvert \leq {}& \frac{C n^{1+ 2\gamma}}{2 \pi^2 \sqrt{tt'} R}  I_{n, \loc}(x) J_{n, \rest}(y), \label{eq:PartIestimate_2} \\
  \left\lvert \widetilde{K}^{\X}_{n, 3\pm}(x, y; t, t') \right\rvert \leq {}& \frac{C n^{1+ 2\gamma}}{2 \pi^2 \sqrt{tt'} R}  I_{n, \rest}(x) J_{n, \loc}(y), \label{eq:PartIestimate_3}
\end{align}
where
\begin{align}
  I_{n, \Diamond}(x) = {}& \int_{\Sigma_{\Diamond}} \left\lvert \exp \left( n \left[ g(z)-g_+(x^*) + \frac{ (x - (1-t)z)^2}{2t(1-t)} \right] \right) \right\rvert \lvert dz \rvert, \quad \Diamond = \loc \text{ or } \rest, \label{def:I1} \\
  J_{n, \rest}(y) = {}& \int_{\Gamma}  \left\lvert \exp \left( -n \left[ g(w) - g_+(x^*) + \frac{(y - (1-t')w)^2}{2t'(1-t')}  \right] \right) \right\rvert  \lvert dw \rvert, \label{def:J0} \\
  J_{n, \loc}(y) = {}& \int^{x^* + Rn^{-\gamma}}_{x^* - Rn^{-\gamma}} \exp \left( -n \left[ \Re g_{\pm}(w) - \Re g_{\pm}(x^*) + \frac{(y - (1-t')w)^2}{2t'(1-t')}  \right] \right) dw. \label{def:J0_3} 
\end{align} 
In \eqref{def:J0_3} we note that $\Re g_+(x) = \Re g_-(x)$ if $x \in \mathbb R$.
Combining \eqref{def:KnXrestI} and \eqref{def:KnXrestI_3+-}, we have that
\begin{multline} \label{eq:KnXrestestimate2}
  \left\lvert \widetilde{K}^{\X}_{n, \rest} \left(x, y; t, t' \right) \right\rvert \leq \\
  \frac{C n^{1+ 2\gamma}}{2 \pi^2 \sqrt{tt'} R} \big( \left\lvert I_{n, \rest}(x) J_{n, \rest}(y) \right\rvert + \left\lvert I_{n, \loc}(x) J_{n, \rest}(y) \right\rvert + 2 \left\lvert I_{n, \rest}(x) J_{n, \loc}(y) \right\rvert \big).
\end{multline}
The proof of \eqref{eq:KnXrestestimate} now relies on good estimates for 
$I_{n, \loc}(x), I_{n, \rest}(x), J_{n, \loc}(y), J_{n, \rest}(y)$. It suffices to have 
the following lemma, whose proof is given in Section \ref{subsec:asy_Lemma_proof}.

\begin{lemma} \label{lem:IJestimates}
  Let $u, v$ be in a compact subset of $\mathbb R$, and let $x, y$ depend on $u, v$ by 
  \begin{equation}\label{def xy}
  x=\widehat x_t^*+\frac{u}{\widehat c_t n^\gamma},\qquad y=\widehat x^*_{t'}+\frac{v}{\widehat c_{t'} n^\gamma}.
  \end{equation} 
  Then there exist constants $c_1, c_2 > 0$ that do not depend on $n$ or $R$ such that the following inequalities hold for $n$ and $R$ large enough,
\begin{align}
  \log I_{n, \loc}(x) \leq {}& \frac{n t V'(x^*)^2}{8(1-t)}  +  \frac{n^{1-\gamma} V'(x^*)}{2(1-t)\widehat c_{t}} u	+ n^{1-2\gamma} c_1 u^2, \label{eq:I0bound} \\
  \log I_{n, \rest}(x) \leq {}& \frac{n t V'(x^*)^2}{8(1-t)} + \frac{n^{1-\gamma} V'(x^*)}{2(1-t)\widehat c_{t}} u + n^{1-2\gamma} (c_1 u^2 - c_2 R^2), \label{eq:Ijbound} \\
  \log J_{n, \loc}(y) \leq {}& -\frac{n t' V'(x^*)^2}{8(1-t')} - \frac{n^{1-\gamma} V'(x^*)}{2(1-t')\widehat c_{t'}} v + n^{1-2\gamma} c_1 v^2, \label{eq:J0bound} \\
  \log J_{n, \rest}(y) \leq {}& -\frac{n t' V'(x^*)^2}{8(1-t')} -\frac{n^{1-\gamma} V'(x^*)}{2(1-t')\widehat c_{t'}} v + n^{1-2\gamma} (c_1 v^2 - c_2 R^2).  \label{eq:Jjbound}
\end{align}
\end{lemma}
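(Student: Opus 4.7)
The plan is to derive all four bounds by carefully expanding the real part of the exponent in each integrand around $w = x^*$, using the Taylor expansion \eqref{eq:gexpansion} of $g$ together with the identities \eqref{eq:xtauV}, \eqref{eq:taucrV}, and \eqref{eq:constants multi}. On the vertical line $z = x^* + is$ a direct computation gives
\begin{equation*}
  \Re\!\left[g(z) - g_+(x^*) + \frac{(x - (1-t)z)^2}{2t(1-t)}\right] = \frac{\alpha^2}{2t(1-t)} - A_t\, s^2 + O(s^4),
\end{equation*}
where $\alpha = \tfrac{t}{2}V'(x^*) + u/(\widehat c_t n^\gamma)$ and $A_t := \frac{(1-t)\tau_{\crit} - t}{2t\tau_{\crit}} > 0$ for $t \in (0, t_{\crit})$, the positivity being exactly the subcritical condition. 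Expanding $n\alpha^2/(2t(1-t))$ in $u$ produces the first two summands on the right of \eqref{eq:I0bound} plus a term of order $n^{1-2\gamma}u^2$, which is absorbed into the $n^{1-2\gamma}c_1 u^2$ slack. An analogous real-line calculation for $w = x^* + r$ yields a saddle point of the exponent of $J_{n,\loc}$ at $r_* = O(n^{-\gamma})$ with minimum value $\beta^2/(2t'(1-t')) + O(n^{-2\gamma}v^2)$, where $\beta = \tfrac{t'}{2}V'(x^*) + v/(\widehat c_{t'}n^\gamma)$.

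The two local bounds \eqref{eq:I0bound} and \eqref{eq:J0bound} then follow from the trivial estimate $\int \leq (\text{length of contour}) \cdot (\max |\text{integrand}|)$, with the factor $\log(2Rn^{-\gamma})$ being subdominant to the $n^{1-2\gamma}$ slack. For \eqref{eq:Ijbound} I will split $\Sigma_{\rest}$ into a near range $Rn^{-\gamma} < |s| < \delta$ with $\delta$ a small $n$-independent constant, and a tail $|s| > \delta$. On the near range the Taylor expansion above dominates and the Gaussian factor $e^{-nA_t s^2}$ evaluated on the boundary $|s| = Rn^{-\gamma}$ produces exactly the gain $e^{-A_t R^2 n^{1-2\gamma}}$; on the tail $\Re g(x^* + is) = O(\log|s|)$ is crushed by the negative quadratic $-n(1-t)s^2/(2t)$ coming from the second exponential, so that contribution is exponentially small in $n$. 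Taking $c_2 = A_t/2$, which is bounded below uniformly on compacts of $(0, t_{\crit})$, yields \eqref{eq:Ijbound}.

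For \eqref{eq:Jjbound} I parametrize each of the four rays of $\Gamma$ as $w = w_0 + \rho e^{\pm i \pi/6}$ with $\rho \ge 0$ and $w_0 = x^* \pm Rn^{-\gamma}$. At $\rho = 0$ the same Taylor analysis reproduces the first two terms of \eqref{eq:Jjbound} together with the bonus $-n^{1-2\gamma}A_{t'}R^2$ arising from the positive $r^2$-coefficient, and a linear cross term proportional to $n^{1-2\gamma}\, vR/(t'\widehat c_{t'})$ is then absorbed by AM--GM into $n^{1-2\gamma}(c_1 v^2 - \tfrac{1}{2} A_{t'} R^2)$. Differentiating in $\rho$ at $\rho = 0$ and using the Euler--Lagrange identity $G_{\mu_0}(x^*) = \tfrac{1}{2}V'(x^*)$ (which is the $l=1$ case of \eqref{eq:Vderivative}), the leading $n$ term cancels and the surviving $O(n^{1-\gamma})$ contribution is strictly negative for $R$ large enough, so the modulus of the integrand is non-increasing near the start of the ray. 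In the far regime $|w| \to \infty$, $|e^{-ng(w)}|$ decays like $|w|^{-n}$ while the real part of the quadratic contributes $-n(1-t')\rho^2/(4t')$ from $\Re(e^{i\pi/3}) = \tfrac{1}{2} > 0$, giving absolute convergence of each ray integral. Combining the local bound at $\rho = 0$ with the tail estimate yields \eqref{eq:Jjbound}.

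The main obstacle will be the intermediate regime $Rn^{-\gamma} \lesssim |w - x^*| \lesssim \delta$ along each ray of $\Gamma$, where neither the Taylor expansion nor the Gaussian alone is decisive. The cleanest route is to show that $\frac{d}{d\rho}\Re\phi$ has a definite sign on the entire ray for $R$ sufficiently large, by combining the sign at $\rho = 0$ with an $n$-uniform bound on $|G_{\mu_0}(w) - \tfrac{1}{2}V'(w)|$ that only uses the analyticity of $g$ on a neighborhood of the rays and the fact that $A_{t'} > 0$. Uniformity in $t, t'$ on compact subsets of $(0, t_{\crit})$ is then automatic since $A_t$, $A_{t'}$ are bounded away from $0$ and $\widehat c_t$, $\widehat c_{t'}$ are bounded away from $0$ and $\infty$ there.
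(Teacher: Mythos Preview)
Your treatment of $I_{n,\loc}$ and $I_{n,\rest}$ would work, but it is more laborious than necessary. The paper observes that
\[
\frac{d}{d\zeta}\Re g(x^*+i\zeta)=\int\frac{\zeta\,d\mu_0(u)}{\zeta^2+(x^*-u)^2}\le\frac{\zeta}{\tau_{\crit}},
\]
which integrates to the global bound $\Re[g(x^*+i\zeta)-g_+(x^*)]\le \zeta^2/(2\tau_{\crit})$ for \emph{all} $\zeta>0$. Combined with the identity $\frac{1-t}{t}-\frac{1}{\tau_{\crit}}>0$ for $t<t_{\crit}$, this reduces both $I$ integrals to a single Gaussian estimate with no near/far splitting and no Taylor remainders to control.

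For $J_{n,\rest}$ there is a genuine gap. You correctly flag the intermediate regime along each ray as the obstacle, but the proposed fix --- showing $\frac{d}{d\rho}\Re\phi$ has a definite sign on the whole ray via ``an $n$-uniform bound on $|G_{\mu_0}(w)-\tfrac12 V'(w)|$'' --- is not a proof. Nothing in the setup gives you a useful uniform bound of that form off the real axis, and even if it did, monotonicity of the full exponent along $\Gamma_j$ is not the right target. The paper does not attempt monotonicity. Instead it proves (Lemma~\ref{lem:est_J}) that for $w\in\Gamma_1\cup\Gamma_2$,
\[
\Re\Big[g(w)-g_+(x^*)-\tfrac{V'(x^*)}{2}(w-x^*)+\tfrac{(w-x^*)^2}{2\tau_{\crit}}\Big] > -Cn^{-3\gamma},
\]
by writing the left side as $\int\big[\log(1-v)+v+\tfrac{v^2}{2}\big]d\mu_0(u)$ with $v=(w-x^*)/(u-x^*)$ and checking that the integrand is positive whenever $\arg v\in\{\pm\pi/6,\pm5\pi/6\}$: the $r$-derivative of $\Re[\log(1-re^{i\theta})+re^{i\theta}+\tfrac12 r^2e^{2i\theta}]$ simplifies to $\tfrac{r^3}{2(1+r^2+2r\cos\theta)}>0$ exactly when $\cos 2\theta=\tfrac12$. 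This is why the rays are opened at angle $\pi/6$, and it is the missing idea in your outline. Once this lower bound is in hand, one completes the square in $w-x^*$ and integrates a genuine Gaussian along each ray, which produces the $-c_2R^2 n^{1-2\gamma}$ gain directly.
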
	

Assuming Lemma \ref{lem:IJestimates}, we can estimate from \eqref{eq:KnXrestestimate2},
\begin{multline} \label{eq:KnXrestestimate3}
  \left\lvert \widetilde{K}^{\X}_{n, \rest}
  \left(\widehat x_t^*+\frac{u}{\widehat c_t n^\gamma}, \widehat x^*_{t'}+\frac{v}{\widehat c_{t'} n^\gamma};t,t'\right) \right\rvert
  \leq  \frac{4Cn^{1+2\gamma}}{\pi^2 \sqrt{tt'} R} e^{\frac{-n(t'-t)}{8(1-t)(1-t')}V'(x^*)^2} \\
  \times \exp \left[ n^{1-\gamma} V'(x^*) \left( \frac{u}{2(1-t)\widehat c_{t}} - \frac{v}{2(1-t')\widehat c_{t'}} \right)
    + n^{1-2\gamma}(c_1 (u^2 + v^2)  - c_2 R^2) \right].
\end{multline}
Furthermore, from \eqref{eq:Hnhat}, we obtain that
\begin{align}
 \widehat H_n(u; t) = {}& \frac{nt}{8(1-t)}V'(x^*)^2+\frac{n^{1-\gamma} V'(x^*)}{2(1-t)\widehat c_{t}} u + \frac{n^{1-2\gamma} V''(x^*)}{4c_0 (1-t)\widehat c_{t}} u^2 + O(n^{1-3\gamma}), \\
  \widehat H_n(v; t') = {}& \frac{nt'}{8(1-t')}V'(x^*)^2+\frac{n^{1-\gamma} V'(x^*)}{2(1-t')\widehat c_{t'}} v + \frac{n^{1-2\gamma} V''(x^*)}{4c_0 (1-t)\widehat c_{t'}} v^2 + O(n^{1-3\gamma}),
\end{align}
as $n \to \infty$, uniformly for $u, v$ in a compact set, where we use the fact that $R_n(u; t)$ and $R_n(v; t')$ are bounded as $n\to \infty$, see \eqref{eq:Rnxi}.

This implies that for some constant $c_3 > 0$, and for $n$ large enough,
\begin{multline} 
  -\widehat H_n(u; t) +\widehat H_n(v; t') \leq \frac{n(t'-t)}{8(1-t)(1-t')}V'(x^*)^2\\- n^{1-\gamma} V'(x^*) \left( \frac{u}{2(1-t)\widehat c_{\tau}} - \frac{v}{2(1-t')\widehat c_{t'}} \right) 
  + c_3 n^{1-2\gamma} (u^2 + v^2).
\end{multline}
Combining this with \eqref{eq:KnXrestestimate3} we get
\begin{multline} \label{eq:KnXrestestimate4} 
  e^{-\widehat H_n(u; t)+ \widehat H_n(v; t')} \left\lvert \widetilde{K}^{\X}_{n, \rest}  
    \left(\widehat x_t^*+\frac{u}{\widehat c_t n^\gamma}, \widehat x^*_{t'}+\frac{v}{\widehat c_{t'} n^\gamma}; t,t'\right)
  \right\rvert \\
  \leq  \frac{4 C n^{1+2\gamma}}{\pi^2 \sqrt{tt'} R} 
  e^{n^{1-2\gamma} ((c_1 + c_3) (u^2 + v^2)  - c_2 R^2)}. 
\end{multline}
Taking $R$ large enough (namely such that $c_2 R^2 > (c_1+c_3)(u^2 + v^2)$, which we
can do since $c_1, c_2, c_3$ do not depend on $R$, while $u$ and $v$ are restricted to a compact),  we obtain \eqref{eq:KnXrestestimate} 
for some $c > 0$.

This completes the proof of Proposition \ref{prop:main}\ref{enu:prop:main_b}, pending
the proofs of Lemma \ref{lem:Y} and \ref{lem:IJestimates}.
As already said, the proof of Lemma \ref{lem:Y} is in Appendix \ref{sec:RHP_summary}.
We end this section with the proof of Lemma \ref{lem:IJestimates}. 

\subsubsection{Proof of Lemma \ref{lem:IJestimates}} \label{subsec:asy_Lemma_proof}

In this section, we assume that $u,v$ are real numbers in a compact subset of $\mathbb R$, and $x, y$ depend on $u,v$ by \eqref{def xy}.

\paragraph{Estimates on $I_{n, \loc}$ and $I_{n, \rest}$.}
We take $z = x^* + i \zeta$ with real $\zeta > 0$. Then by the definition \eqref{def:gs} of the $g$-function, we have
\begin{align*} \Re g(x^* + i\zeta) & = \int \log |x^* + i \zeta - u| d\mu_0(u)
	\end{align*}
with derivative
\[ \frac{d}{d\zeta} \Re g(x^* + i\zeta) = \int \frac{\zeta d\mu_0(u)}{\zeta^2 + (x^*-u)^2} \leq 
\zeta \int \frac{d\mu_V(u)}{(x^*-u)^2} = \frac{\zeta}{\tau_{\crit}}, \]
see \eqref{def:taucr}.
Thus after integration
\begin{equation} \label{eq:Regestimate} 
  \Re \left[ g(x^* + i\zeta) - g_+(x^*) \right] \leq  \frac{\zeta^2}{2 \tau_{\crit}},  \qquad \zeta > 0. 
\end{equation}

First we consider $I_{n, \rest}$ defined in \eqref{def:I1}, and prove \eqref{eq:Ijbound}. We note that $g(\bar{z}) = \overline{g(z)}$, so we only need to consider the integral in \eqref{def:I1} on $\Sigma_{\rest} \cap \mathbb C_+$, and have
\begin{equation} \label{eq:integral_est_I_rest}
  \begin{split}
    \log I_{n, \rest}(x) & = \log \left( 2 \int^{+\infty}_{Rn^{-\gamma}} e^{n \Re \left[ g(x^* + i \zeta) -g_+ (x^*) + \frac{1}{2t(1-t)} (x-(1-t)x^*-i(1-t)\zeta)^2 \right]} d\zeta \right) \\
    & \leq  \frac{n}{2t(1-t)} \left(\frac{t}{2} V'(x^*) + \frac{u}{\widehat c_{t} n^{\gamma}} \right)^2
    + \log \int_{Rn^{-\gamma}}^{\infty} e^{-\frac{n}{2}(\frac{1-t}{t}-\frac{1-t_{\crit}}{t_{\crit}}) \zeta^2} d\zeta + \log 2 \\
    & = \frac{n t V'(x^*)^2}{8(1-t)}  + \frac{n^{1-\gamma} V'(x^*)}{2(1-t)\widehat c_{t}} u 
    + \frac{n^{1-2\gamma}}{2 t(1-t) \widehat c_{t}^2} u^2 + 
    \log \int_{Rn^{-\gamma}}^{\infty} e^{-\frac{n}{2}(\frac{1-t}{t}-\frac{1-t_{\crit}}{t_{\crit}}) \zeta^2} d\zeta + \log 2.
  \end{split}
\end{equation}
Note that $t < t_{\crit}$ so that $\frac{1-t}{t}-\frac{1-t_{\crit}}{t_{\crit}} > 0$.
Then it is straightforward to obtain \eqref{eq:Ijbound} 
if we take $c_1 \geq \frac{1}{2t(1-t)\widehat c_{t}^2}$ and $c_2 < \frac{1}{2}(\frac{1-t}{t}-\frac{1-t_{\crit}}{t_{\crit}})$.

Next we consider $I_{n, \loc}$, also defined in \eqref{def:I1}, and prove \eqref{eq:I0bound}. By the argument above, with the integral domain in \eqref{eq:integral_est_I_rest} changed from $(R n^{-\gamma}, +\infty)$ to $(0, R n^{-\gamma})$, we find
\begin{equation}
  \log I_{n, \loc}(x) \leq \frac{nt V'(x^*)^2}{8(1-t)}
  + \frac{n^{1-\gamma} V'(x^*)}{2(1-t)\widehat c_t} u + 
  \frac{n^{1-2\gamma}}{2 t(1-t)\widehat c_{t}^2} u^2 + \log
  \int^{R n^{-\gamma}}_0 e^{-\frac{n}{2}(\frac{1-t}{t}-\frac{1-t_{\crit}}{t_{\crit}}) \zeta^2} d\zeta + \log 2. 
\end{equation}
This gives \eqref{eq:I0bound}, provided again that $c_1 \geq \frac{1}{2t(1-t) \widehat c_{t}^2}$
and $n$ is large enough.

\paragraph{Estimates on $J_{n, \loc}$ and $J_{n, \rest}$.}

As in the estimates on $I_{n, \loc}$ and $I_{n, \rest}$, we only need to consider $w \in \mathbb C_+$.
\begin{lemma} \label{lem:est_J}
  Let either $w = x^* + R n^{-\gamma} + r e^{\pi i/6}$ or $w = x^* - R n^{-\gamma} + r e^{5\pi i/6}$. For $n$ large enough, there exists $C > 0$ such that for all $r > 0$,
  \begin{equation} \label{eq:Realpos} 
    \Re f(w) > -C n^{-3\gamma}, \quad \text{where} \quad f(w) = g(w) - g_+(x^*) - \frac{V'(x^*)}{2} (w - x^*) + \frac{(w - x^*)^2}{2 \tau_{\crit}}.
  \end{equation}
\end{lemma}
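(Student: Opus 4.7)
My plan is to combine a local Taylor analysis at $x^*$ with a global growth estimate along the ray. The key preliminary observation is that the first three Taylor coefficients of $f$ at $x^*$ all vanish: by \eqref{eq:gexpansion} one has $g(z) = g_+(x^*) + \frac{V'(x^*)}{2}(z-x^*) + \frac{V''(x^*)}{4}(z-x^*)^2 + O((z-x^*)^3)$ near $x^*$, and the identity $\frac{1}{2\tau_\crit} = -\frac{V''(x^*)}{4}$ from \eqref{eq:taucrV} then makes the quadratic term of $f$ vanish as well. Hence $f$ extends analytically to a disk $|w-x^*| \leq \delta$ (for some fixed $\delta > 0$) with $f(w) = -\frac{g_2}{3}(w-x^*)^3 + O((w-x^*)^4)$, where $g_2$ is given by \eqref{defn_g_j_interior} or \eqref{eq:defn_g_j_edge}, and in particular $|f(w)| \leq C_0 |w-x^*|^3$ on that disk.

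I would then split the ray $w = x^* + Rn^{-\gamma} + re^{i\pi/6}$ into a near piece where $|w-x^*| \leq Mn^{-\gamma}$ for some $M > 0$ and a far piece where $|w-x^*| > Mn^{-\gamma}$; the second ray $w = x^* - Rn^{-\gamma} + re^{5\pi i/6}$ is handled by the symmetric reflection. On the near piece, the Taylor bound gives $\Re f(w) \geq -C_0 M^3 n^{-3\gamma}$, which is of the required form. On the far piece, I would establish $\Re f(w) \geq 0$ by exploiting the specific angle $\pi/6$: along the ray, $\arg(w-x^*) \in [0, \pi/6]$, which yields $\Re(w-x^*)^2 \geq \tfrac{1}{2}|w-x^*|^2$, so the term $\frac{\Re(w-x^*)^2}{2\tau_\crit}$ in $f$ produces a positive contribution of quadratic order in $|w-x^*|$. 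Combining this with the log-potential representation $\Re g(w) - \Re g_+(x^*) = \int \log\frac{|w-s|}{|x^*-s|} d\mu_0(s)$, which grows only logarithmically for $|w|$ large and is continuous in $w$, one concludes that the quadratic contribution dominates the other terms once $|w-x^*|$ exceeds a fixed positive constant.

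The main obstacle is the intermediate window where $Mn^{-\gamma} < |w-x^*| \leq \delta$: here the Taylor remainder $C_0|w-x^*|^3$ is no longer of size $n^{-3\gamma}$, and the sign of the cubic leading term can be unfavorable (for instance when $g_2 > 0$ is real, which is possible for an interior singular point with $k \geq 2$ and asymmetric $h$, or on the second ray when $g_2 < 0$). To handle this window I would use the exact integral representation $\Re f(w) = \int F_s(w)\, d\mu_0(s)$ with $F_s(w) = \Re[\log(1+\eta/v) - \eta/v + \eta^2/(2v^2)]$, $\eta = w-x^*$, $v = x^*-s$, and exploit that the density $\psi_0$ vanishes to order $2k$ (resp.\ $2k+1/2$) at $x^*$ to dampen the contribution of $s$ near $x^*$ where $|\eta/v|$ is not small. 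The quadratic positivity $\Re \eta^2 \geq |\eta|^2/2$ on the ray, applied together with the sign structure of $F_s(w)$ for $|\eta/v|$ small, should yield $\Re f(w) \geq 0$ on this intermediate window after patching with the far piece by continuity. The edge case requires using the Puiseux expansion \eqref{eq:expansion_G_mu_0_edge} and separating out the half-integer contributions, but otherwise follows the same outline.
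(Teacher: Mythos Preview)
Your local analysis (the vanishing of the first three Taylor coefficients and the bound $|f(w)|\le C_0|w-x^*|^3$ for $|w-x^*|\le Mn^{-\gamma}$) matches the paper and correctly handles the near piece. The gap is exactly where you locate it: the intermediate window $Mn^{-\gamma}<|w-x^*|\le\delta$. There your sketch appeals to ``the sign structure of $F_s(w)$ for $|\eta/v|$ small'' together with $\Re\eta^2\ge\tfrac12|\eta|^2$, but for small $|\eta/v|$ one has $F_s(w)\approx -\tfrac13\Re(\eta/v)^3$, whose sign is \emph{not} controlled by $\Re\eta^2\ge 0$; and the damping from $\psi_0(s)\sim|s-x^*|^\kappa$ does not help, since the problematic $s$ are those far from $x^*$ (so that $|\eta/v|$ is small), where the density is of order one. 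Your far-piece argument (quadratic beats logarithmic) is fine only once $|w-x^*|$ exceeds some absolute constant, so it does not cover the intermediate window either.

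The paper closes this gap by a single pointwise computation that you are missing. After reducing by continuity to the ray from $x^*$ itself (i.e.\ $Rn^{-\gamma}\to 0$), one writes
\[
\Re f(w)=\int \Re\Bigl[\log(1-v)+v+\tfrac{v^2}{2}\Bigr]\,d\mu_0(u),\qquad v=\frac{w-x^*}{u-x^*},
\]
so that $\arg v\in\{\pm\pi/6,\pm 5\pi/6\}$ for every $u\in\mathbb R$. The key observation is that at precisely these angles the $r$-derivative of the bracketed real part simplifies to $\tfrac{r^3}{2(1+r^2+2r\cos\theta)}>0$ (because $\cos 2\theta=\tfrac12$ kills the $\cos\theta(1-2\cos 2\theta)$ term). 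Hence the integrand is nonnegative for \emph{all} $r>0$ and all $u$, which gives $\Re f(w)\ge 0$ on the entire ray from $x^*$, with no near/intermediate/far splitting needed beyond the initial $r<\epsilon$ Taylor step. This monotonicity in $r$ along the chosen angles is the one idea your proposal lacks; once you have it, the interior and edge cases are treated uniformly and the Puiseux-expansion patching you anticipate is unnecessary.
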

\begin{proof}
  If $x^*$ is a singular interior point, then the Taylor expansion of $G(w) = g'(w)$ is given in  \eqref{eq:Gmu0expansion1}, and if $x^*$ is a singular right edge point, then the Puiseux expansion of $G(w) = g'(w)$ is given in \eqref{eq:expansion_G_mu_0_edge}. In both cases, we have that for $w \in \mathbb C_+$ and $w\to x^*$,
  \begin{equation}\label{expansion g}
    g(w) = g_+(x^*)- g_0(w - x^*) - \frac{g_1}{2} (w - x^*)^2 - \frac{g_2}{3} (w - x^*)^3 + O((w - x^*)^4).
  \end{equation}
  Furthermore, by the formulas \eqref{def:g1} and \eqref{eq:xtauV} for the values of $g_0$ and $g_1$, we have
  \begin{equation}
    f(w) = -\frac{g_2}{3} (w - x^*)^3 + O((w - x^*)^4),\qquad w\to x^*,
  \end{equation}
  and for sufficiently small $\epsilon > 0$, Lemma \ref{lem:est_J} is proved for $r \in (0, \epsilon)$, by direct calculation.

  For $r > \epsilon$, 
we need to prove \eqref{eq:Realpos} with $w$ defined by $w = x^* + r e^{\pi i/6}+\delta$ or $w = x^* + r e^{5\pi i/6}-\delta$, and with $\delta=Rn^{-\gamma}$.
Given $R>0$, if $n$ is sufficiently large, then $\delta$ can be taken arbitrarily small. Since $f(x^* + r e^{\pi i/6}+\delta)$ and $f(x^* + r e^{5\pi i/6}-\delta)$ depend continuously on $\delta$, it is sufficient to prove \eqref{eq:Realpos} for $\delta=0$.

By \eqref{def:gs}, \eqref{eq:Vderivative}, and \eqref{eq:taucrV},
  \begin{equation}
    f(w) = \int \left[ \log \left(1 - \frac{w - x^*}{u-x^*}\right) +  
      \frac{w - x^*}{u-x^*} + \frac{(w - x^*)^2}{2(u-x^*)^2} \right] d\mu_0(u).
  \end{equation}
  Note that $\arg\frac{w - x^*}{u-x^*} \in \{ \pm \frac{\pi}{6}, \pm \frac{5\pi}{6} \}$ for $u \in \mathbb R$.
  Since $\mu_0$ is a positive measure, it is therefore enough to prove that
  \begin{equation} \label{lem52:toprove} 
    \arg v \in \left\{\pm \frac{\pi}{6}, \pm \frac{5\pi}{6} \right\} \implies
    \Re \left[ \log \left(1 - v \right) + v + \frac{v^2}{2} \right] > 0. 
  \end{equation}
  in order to obtain \eqref{eq:Realpos}.
  
  Putting $v = r e^{i \theta}$, where $\theta \in \{ \pm \frac{\pi}{6}, \pm \frac{5\pi}{6} \}$, we have
  \begin{equation} \label{eq:Repart} 
    \Re \left[ \log \left(1 - v \right) + v + \frac{v^2}{2} \right] 
    =  \frac{1}{2} \log\left(1+r^2 - 2r \cos \theta \right)	
    + r \cos(\theta) +  \frac{r^2}{2} \cos(2 \theta) ,
  \end{equation}
  which is zero for $r=0$ and which has an $r$-derivative
  \begin{equation} \label{eq:rderiv} 
    \frac{r^2}{1+ r^2 + 2r \cos \theta} 
    \left( \cos \theta (1-2 \cos(2\theta)) + r \cos(2\theta) \right).  
  \end{equation}
  Since $\theta \in \{ \pm \frac{\pi}{6}, \pm \frac{5\pi}{6} \}$, we have $\cos(2\theta) = 1/2$,
  and \eqref{eq:rderiv} reduces to $\frac{r^3}{2(1+ r^2 + 2 r \cos \theta)}$
which is $> 0$. Thus
\eqref{eq:Repart} increases if $\theta = \arg w \in \{ \pm \pi/6, \pm 5 \pi/6 \}$ and $r = \lvert w \rvert$  increases. 
Then \eqref{lem52:toprove} is proved, and this completes the proof of the lemma.
\end{proof}
Because of Lemma \ref{lem:est_J}, we find for $w \in \Gamma_1 \cup \Gamma_2$,
\begin{multline} \label{eq:Realineq1}  
    \Re \left[ g(w) - g_+(x^*) + \frac{1}{2t'(1-t')} (y-(1-t')w)^2 \right]\\ 
    \geq \Re\left[ \frac{V'(x^*)}{2} (w - x^*) - \frac{(w - x^*)^2}{2 \tau_{\crit}}  + \frac{1}{2t'(1-t')} (y-(1-t')w)^2 \right] - C n^{-3\gamma}.
\end{multline}
By \eqref{eq:xtauV} and the expression \eqref{def xy} for $y$, it is straightforward to check that the right-hand side of this expression is equal to 
\begin{multline}\label{estimateGamma1}
  \frac{1}{2} \left(\frac{t_{\crit}-t'}{t_{\crit}t'}\right) \Re
  \left[ \left( w - x^* - \frac{v}{c_0n^{\gamma}} \right)^2 \right] + \frac{t' V'(x^*)^2}{8(1-t')}\\  + \frac{n^{-\gamma} V'(x^*)}{2(1-t')\widehat c_{t'}} v
  - \frac{n^{-2\gamma}}{2\widehat c_{t'}^2 (t_{\crit}-t')} v^2 - Cn^{-3\gamma}.
\end{multline}
  
For $w \in \Gamma_1$, we have $\Re (w - (x^* + R n^{-\gamma}))^2 = \frac{1}{2} \lvert w - (x^* + R n^{-\gamma}) \rvert^2$
and  one may check that
\begin{align}
&\Re
  \left[ \left( w - x^* - \frac{v}{c_0n^{\gamma}} \right)^2 \right]= \Re
  \left[ \left( w - x^* - Rn^{-\gamma} \right)^2 +\frac{(c_0R-v)^2}{c_0^2n^{2\gamma}}+2\frac{c_0R-v}{c_0n^\gamma}(w-x^*-\frac{R}{n^\gamma})\right]\nonumber\\
  &=\frac{1}{2} \lvert w - (x^* + R n^{-\gamma}) \rvert^2+\frac{(c_0R-v)^2}{c_0^2n^{2\gamma}}+\sqrt{3}\frac{c_0R-v}{c_0n^\gamma}|w-x^*-Rn^{-\gamma}|\nonumber\\&=
 \frac{1}{2}\left( \lvert w - (x^* + R n^{-\gamma}) \rvert +\sqrt{3}\frac{c_0R-v}{c_0n^\gamma}\right)^2-\frac{(c_0R-v)^2}{2c_0^2n^{2\gamma}},
\end{align}
for $w\in\Gamma_1$.

Substituting this in \eqref{estimateGamma1}, we obtain that for $w \in \Gamma_1$,
\begin{multline} \label{eq:Realineq2} 
  \Re \left[ g(w) - g_+(x^*)  + \frac{1}{2t'(1-t')} (y-(1-t')w)^2  \right] 
  \geq 
  c_4 \left( \lvert w - (x^* + R n^{-\gamma}) \rvert+\sqrt{3}\frac{c_0R-v}{c_0n^\gamma}\right)^2  \\
  + \frac{t' V'(x^*)^2}{8(1-t')} + \frac{V'(x^*)}{2 (1-t')\widehat c_{t'} n^{\gamma}} v
  -c_4\frac{(c_0R-v)^2}{c_0^2n^{2\gamma}} - \frac{n^{-2\gamma}}{2\widehat c_{t'}^2 (t_{\crit}-t')} v^2- C n^{-3\gamma}, 
\end{multline}
with $c_4  = \frac{t_{\crit}-t'}{4t_{\crit}t'}$.

We obtain from \eqref{eq:Realineq2}, for $j=1,2,3,4$,
\begin{multline*}
  \log \int_{\Gamma_{1}} \left\lvert e^{-n(g(w)-g_+(x^*) + \frac{1}{2t'(1-t')}(y-(1-t')w)^2} \right\rvert \lvert dw \rvert 
  \leq -\frac{nt' V'(x^*)^2}{8(1-t')} -  \frac{n^{1-\gamma}V'(x^*)}{2(1-t')\widehat c_{t'}}v
\\+c_4\frac{(c_0R-v)^2}{c_0^2}n^{1-2\gamma}  +c_5v^2n^{1-2\gamma}   
 + \log 
  \int_{\Gamma_{1}}  e^{-n c_4 \left( \lvert w - (x^* + R n^{-\gamma}) \rvert+\sqrt{3}\frac{c_0R-v}{c_0n^\gamma}\right)^2} \lvert dw \rvert + Cn^{1-3\gamma},
\end{multline*}
with $c_5=\frac{1}{2\widehat c_{t'}^2(t_{\crit}-t')}$.

By direct computation, the logarithm of the integral is bounded by
$-3c_4n^{1-2\gamma}\left(\frac{c_0R-v}{c_0}\right)^2$ for $R$ large enough, and we obtain
 for $n$ and $R$ large enough,
\begin{multline} \label{eq:final_est_Gamma_1}
   \log \int_{\Gamma_1} \left\lvert e^{-n(g(w)-g_+(x^*) + \frac{1}{2t'(1-t')}(y-(1-t)w)^2} \right\rvert \lvert dw \rvert - \left(  -\frac{nt' V'(x^*)^2}{8(1-t')} -  \frac{n^{1-\gamma}V'(x^*)}{2(1-t')\widehat c_{t'}} v \right) \\
    \leq {}    
-2c_4n^{1-2\gamma}\left(\frac{c_0R-v}{c_0}\right)^2   +c_5v^2n^{1-2\gamma}    
    + C n^{1 - 3\gamma} 
    < {} n^{1 - 2\gamma}(c_1 v^2 - c_2 R^2),
\end{multline}
for suitable constants $c_1, c_2$ depending on $t'$ but not on $R$ or $n$.

A similar argument gives the estimate \eqref{eq:final_est_Gamma_1} with $\Gamma_1$ replaced by $\Gamma_2$. Then by the complex conjugate invariance of the integrand on the right-hand side of \eqref{def:J0}, we prove \eqref{eq:Jjbound}.

\medskip
Finally, we prove \eqref{eq:J0bound}.
By \eqref{expansion g}, we have
for $w\in\mathbb R$ and $w\to x^*$,
\begin{multline}
\Re g_+(w) - \Re g_+(x^*) + \frac{1}{2t'(1-t')} (y - (1-t')w)^2\\ =-g_0(w-x^*)-g_1(w-x^*)^2+ \frac{1}{2t'(1-t')} (y - (1-t')w)^2+O((w-x^*)^3).
\end{multline}
By \eqref{def xy},
the right-hand side is equal to 
\[\frac{1}{2} \left(\frac{t_{\crit}-t'}{t_{\crit}t'}\right) \Re
  \left[ \left( w - x^* - \frac{v}{c_0n^{\gamma}} \right)^2 \right] + \frac{t' V'(x^*)^2}{8(1-t')}\\  + \frac{n^{-\gamma} V'(x^*)}{2(1-t')\widehat c_{t'}} v
  - \frac{n^{-2\gamma}}{2\widehat c_{t'}^2 (t_{\crit}-t')} v^2 +O((w-x^*)^3)\]
  as $w\to x^*$.
By \eqref{def:J0_3}, this implies
\[\log J_{n,\loc}(y)\leq -\frac{nt' V'(x^*)^2}{8(1-t')}- \frac{n^{1-\gamma} V'(x^*)}{2(1-t')\widehat c_{t'}} v+\frac{n^{1-2\gamma}}{2\widehat c_{t'}^2 (t_{\crit}-t')} v^2+O(n^{1-3\gamma}),\] as $n\to\infty$,
and \eqref{eq:J0bound} follows.

\section{Proof of Theorem \ref{thm:multi}\ref{enu:thm:multi_b}} \label{section: proof main result (b)}

Let us recall the precise definition of the function $\widehat{H}_n(u; t)$ appearing on the left-hand side of \eqref{eq:Gnlimit_general} and \eqref{eq:Gnlimit1}.  It was introduced in \eqref{eq:Hnhat} as
\begin{equation} \label{eq:Hnhat2} 
\widehat{H}_n(u,t)  =
\frac{t n}{8(1-t)} V'(x^*)^2 + \frac{u n^{1-\gamma}}{2\widehat c_{t}(1-t)} V'(x^*) + \frac{u^2n^{1-2\gamma}}{4c_0 \widehat{c}_{t}(1-t)} V''(x^*)	
+ n^{1-3\gamma} \widehat{R}_n(s_n(u,t), u,t)
\end{equation}
where $s_n(u,t)$ satisfies, see \eqref{eq:snut} and \eqref{eq:snutlimit},
\begin{equation} \label{eq:snut2} 
	s_n(u,t) = u - t c_0 \widehat{c}_t n^{-\gamma} R_n'(s_n(u,t)) =  
	u - \frac{t \widehat{c}_t V'''(x^*)}{4c_0} n^{-\gamma} u^2 + O(n^{-2\gamma} u^4)  
\end{equation}
as $n\to\infty$,
with  
\begin{equation} \label{def:Rns} R_n(s) =  \frac{n^{3\gamma}}{2}\left(V\left(x^*+\frac{s}{c_0n^\gamma}\right)-V(x^*)
-V'(x^*)\frac{s}{c_0n^\gamma}-V''(x^*)\frac{s^2}{2c_0^2n^{2\gamma}}\right)
\end{equation}
as in \eqref{eq:Rnxi}, and 
\begin{equation} \label{eq:Rnsnut}
\widehat{R}_n(s_n(u,t), u,t)
= R_n(s_n(u,t)) + \frac{n^{\gamma}}{2tc_0 \widehat{c}_t} (s_n(u,t) - u)^2.  \end{equation}
see \eqref{eq:Rnhat}.

We first look at the expression in the exponential factor of $G_n$, see \eqref{eq:Gnxy}, which can be written as
\begin{equation} \label{eq:Gaussrewrite}
  -\frac{(x - y)^2}{2(t' - t)} + \frac{x^2}{2(1 - t)} - \frac{y^2}{2(1 - t')} = \\
 -\frac{1}{2(t'-t)}\left(x\sqrt{\frac{1-t'}{1-t}}-y\sqrt{\frac{1-t}{1-t'}}\right)^2.
\end{equation}
If we set, as in Theorem \ref{thm:multi},
\[
x=\widehat x_t^*+\frac{u}{\widehat c_t n^\gamma},\qquad
y=\widehat x_{t'}^*+\frac{v}{\widehat c_{t'} n^\gamma}, 
\]
and use \eqref{eq:constants multi} and \eqref{eq:Gaussrewrite}, we obtain after some calculations
\begin{multline} \label{eq:Gaussrewrite2}
  -\frac{(x - y)^2}{2(t' - t)} + \frac{x^2}{2(1 - t)} - \frac{y^2}{2(1 - t')} \\
  \begin{aligned}
    = {}& -\frac{(1-t)(1-t')}{2(t'-t)}\left(\frac{u}{(1-t)\widehat c_t n^{\gamma}}-\frac{v}{(1-t')\widehat c_{t'}n^{\gamma}}-\frac{(t'-t)V'(x^*)}{2(1-t)(1-t')}\right)^2 \\
    = {}&
    -\frac{t'-t}{8(1-t)(1-t')}V'(x^*)^2  
    +\frac{V'(x^*)}{2n^\gamma}\left(\frac{u}{(1-t)\widehat c_{t}}-\frac{v}{(1-t')\widehat c_{t'}}\right)\\
    & -\frac{1}{n^{2\gamma}}\frac{(1-t)(1-t')}{2(t'-t)}\left(\frac{u}{(1-t)\widehat c_{t}}-\frac{v}{(1-t')\widehat c_{t'}}\right)^2.
  \end{aligned}
\end{multline}
We can further rewrite this, using the  formula \eqref{eq:constants multi} for $\widehat c_t, \widehat c_{t'}$ and the fact that $\frac{V''(x^*)}{2}=-\frac{1-t_{\crit}}{t_{\crit}}$ 
(which follows from \eqref{eq:taucrV} and \eqref{eq:defn_t_crit}).
Then \eqref{eq:Gaussrewrite2} and \eqref{eq:Hnhat2} give us that 
\begin{multline} \label{eq:almost_done_simpler_multi}
-\frac{(x - y)^2}{2(t' - t)} + \frac{x^2}{2(1 - t)} - \frac{y^2}{2(1 - t')}  =
	\frac{\widehat H_n(u; t)}{n} - \frac{\widehat H_n(v; t')}{n} \\
 -\frac{n^{-2\gamma}}{2(t' - t)} \frac{(u-v)^2}{\widehat c_t \widehat c_{t'}} 
+ n^{-3\gamma} (\widehat{R}_n(s_n(v,t'), v,t') - \widehat{R}_n(s_n(u,t), u, t)).
\end{multline}
Hence by \eqref{eq:Gnxy}, \eqref{eq:Fnuv} and \eqref{eq:almost_done_simpler_multi},
\begin{equation} \label{eq:peaked_gaussian}
F_n(u,v;t,t') = \frac{\sqrt{n^{1-2\gamma}}}{\sqrt{2\pi} \sigma_{t,t'}} e^{- \frac{n^{1-2\gamma}}{2\sigma^2_{t,t'}}  (u-v)^2}
 e^{n^{1-3\gamma} (\widehat{R}_n(s_n(v,t'), v, t') - \widehat{R}_n(s_n(u,t), u, t))}, 
\end{equation}
with 
\begin{equation} \label{eq:sigmatt}
	\sigma_{t,t'}^2 = (t'-t) \widehat{c}_t \widehat{c}_{t'}.
	\end{equation}
From \eqref{eq:peaked_gaussian} and the fact that $\gamma=1/(\kappa+1) \leq 1/3$, 
it is clear that $F_n(u,v;t,t') \to 0$ as $n \to \infty$
whenever $t'> t$ and $u \neq v$. This proves \eqref{eq:Gnlimit_general}. 

\bigskip

Furthermore, it is a standard fact that
\begin{equation} \label{eq:model_deta_convergence}
  \lim_{n\to\infty} \frac{\sqrt{n^{1-2\gamma}}}{\sqrt{2\pi} \sigma_{t,t'}} e^{- \frac{n^{1-2\gamma}}{2\sigma^2_{t,t'}}  (u-v)^2} = \delta(u-v),
\end{equation}
in the weak distributional sense.
The Gaussian factor is concentrated in the region where 
$|u-v| \leq n^{-1/2 + \gamma+ \varepsilon}$ for any $\varepsilon > 0$. 
By \eqref{eq:Rnlimit},
\eqref{eq:snutlimit} and \eqref{eq:Rnsnut}, the leading order behavior of $\widehat{R}_n(s_n(u,t),u,t)$ is equal to
\begin{equation} \label{eq:Rnulimit} 
	\widehat{R}_n(s_n(u,t),u,t) = \frac{V'''(x^*)}{12c_0^3} u^3 + O(n^{-\gamma}u^4)
	\qquad \text{ as } n \to \infty. \end{equation}
If $\gamma = 1/3$ or $2/7$ (so that $\kappa=2$ or $5/2$),  \eqref{eq:model_deta_convergence} and \eqref{eq:Rnulimit}
are enough to directly prove 
\begin{equation} \label{eq:weak_conv_F_n}
  \lim_{n\to\infty} F_n(u,v; t,t') = \delta(u-v),
\end{equation}
which gives us \eqref{eq:Gnlimit1}. However for smaller $\gamma$ (recall by
\eqref{def:gamma} that $\gamma = 1/(\kappa + 1) \leq 1/3$), the limit \eqref{eq:Rnlimit} is 
not enough to conclude \eqref{eq:weak_conv_F_n} and a more refined analysis is needed.

To analyze $F_n(u, v; t, t')$ for general $\gamma$, we first note that
by \eqref{eq:Rnsnut} and \eqref{eq:snut2}
\begin{equation} \label{eq:Rnhatprime}
 \frac{d}{du} \widehat{R}_n(s_n(u,t),u,t) = 
	 - \frac{n^{\gamma}}{t c_0 \widehat{c}_t}(s_n(u,t) - u)
	 = R_n'(s_n(u,t)) 
	 	\end{equation}
and thus, for any fixed $v$,
\begin{equation}
  \frac{d}{d u} \left(\frac{(u - v)^2}{2\sigma^2_{t, t'}} + n^{-\gamma} \widehat{R}_n(s_n(u,t), u, t) \right) = \frac{u - v}{\sigma^2_{t, t'}} - \frac{s_n(u,t)-u}{tc_0 \hat{c}_{t}}.
\end{equation}
Hence, for $n$ large enough, the minimum of $u \mapsto \frac{(u - v)^2}{2\sigma^2_{t, t'}} + n^{-\gamma} \widehat{R}_n(s_n(u,t), u, t)$
is assumed, not at $u=v$, but at a nearby point $v^*_n$ that is such that
\begin{equation} \label{def:vstar}
  v^*_n = v + \frac{\sigma^2_{t, t'}}{tc_0 \widehat{c}_t} (s_n(v^*_n,t)-v^*_n)
  = v - n^{-\gamma} \sigma_{t,t'}^2 R_n'(s_n(v^*_n,t))).
\end{equation}
Note that $v^*_n$ depends analytically on $v$, and $v_n^* = v + O(n^{-\gamma})$ as $n \to \infty$.

We then have by \eqref{eq:peaked_gaussian}
\begin{equation} \label{eq:peaked2}
	F_n(u,v;t,t') = \frac{\sqrt{n^{1-2\gamma}}}{\sqrt{2\pi}\sigma_{t,t'}} 
e^{-\frac{n^{1-2\gamma}}{2\sigma_{t,t'}^2} (u-v^*_n)^2}
e^{n^{1-3\gamma} Q_n(u,v;t,t')}
\end{equation}
with
\begin{multline} \label{def:Qn} 
	Q_n(u,v;t,t') = \widehat{R}_n(s_n(v,t'),v,t') - \widehat{R}_n(s_n(u,t),u,t) \\
	- \frac{n^{\gamma}}{2\sigma^2_{t,t'}} (v^*_n-v)^2
	- \frac{n^{\gamma}}{\sigma^2_{t,t'}}(u-v^*_n)(v^*_n-v). \end{multline}

By definition of $v^*_n$, we have
\begin{equation} \label{eq:Qnu} 
	\frac{\partial Q_n}{\partial u}(v^*_n,v;t,t') = 0 \end{equation}
and it is also easy to see from \eqref{def:Qn} and \eqref{eq:Rnulimit} 
that
\begin{equation} \label{eq:Qnuu}
	\frac{\partial^2 Q_n}{\partial u^2}(v^*_n,v;t,t') =
	- \frac{d^2}{du^2} \widehat{R}_n(s_n(u,t),u,t) =  
	 - \frac{V'''(x^*)}{4c_0^3} u^2 + O(n^{-\gamma} u^3), \end{equation}
	 as $n\to\infty$.
However, it is not at all obvious that $Q_n \equiv 0$ on the locus $u = v^*_n$. 
We state this fact as a separate lemma.
\begin{lemma} We have
\begin{equation} \label{eq:snidentity}
	s_n(v^*_n,t) = s_n(v,t')
	\end{equation}
	and 
	\begin{equation} \label{eq:Qnidentity} 
	Q_n(v^*_n,v;t,t') = 0.
	\end{equation}
\end{lemma}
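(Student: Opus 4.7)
My plan is to verify the two identities directly from the defining equations, using a key algebraic identity relating $\widehat{c}_t$, $\widehat{c}_{t'}$, and $\sigma_{t,t'}^2$.

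First I would establish the algebraic identity
\begin{equation}
tc_0\widehat{c}_t + \sigma_{t,t'}^2 = t'c_0\widehat{c}_{t'}.
\end{equation}
This is a straightforward computation from the definition $\widehat{c}_t = \frac{t_{\crit}}{t_{\crit}-t}c_0$ and $\sigma_{t,t'}^2 = (t'-t)\widehat{c}_t\widehat{c}_{t'}$: after multiplying through by $(t_{\crit}-t)(t_{\crit}-t')/(t_{\crit} c_0 \widehat{c}_t\widehat{c}_{t'})$, both sides reduce to $t'-t$. This is the only nontrivial piece, and it reflects the consistency of the Gaussian semigroup structure with the Biane subordination scaling.

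Next I would prove \eqref{eq:snidentity}. Substituting the defining relation \eqref{def:vstar} for $v^*_n$ into the saddle point equation \eqref{eq:snut2} gives
\begin{equation}
s_n(v^*_n,t) = v - n^{-\gamma}\bigl(tc_0\widehat{c}_t + \sigma_{t,t'}^2\bigr) R'_n(s_n(v^*_n,t)) = v - n^{-\gamma} t'c_0\widehat{c}_{t'} R'_n(s_n(v^*_n,t)),
\end{equation}
which is exactly the equation defining $s_n(v,t')$; by uniqueness of the implicit function solution (for $n$ large) we conclude $s_n(v^*_n,t)=s_n(v,t')$. Let us write $s^* := s_n(v^*_n,t) = s_n(v,t')$.

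For \eqref{eq:Qnidentity}, I plug $u=v^*_n$ into \eqref{def:Qn}. The last term vanishes since $u-v^*_n=0$. Expanding $\widehat{R}_n$ via \eqref{eq:Rnsnut} and using $s_n(v^*_n,t)=s_n(v,t')=s^*$, the two copies of $R_n(s^*)$ cancel, leaving
\begin{equation}
Q_n(v^*_n,v;t,t') = \frac{n^\gamma}{2t'c_0\widehat{c}_{t'}}(s^*-v)^2 - \frac{n^\gamma}{2tc_0\widehat{c}_t}(s^*-v^*_n)^2 - \frac{n^\gamma}{2\sigma_{t,t'}^2}(v^*_n-v)^2.
\end{equation}
From \eqref{eq:snut2} and \eqref{def:vstar}, each of the three squared differences is a constant multiple of $n^{-2\gamma} R'_n(s^*)^2$: namely $(t'c_0\widehat{c}_{t'})^2$, $(tc_0\widehat{c}_t)^2$, and $\sigma_{t,t'}^4$ respectively. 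After substitution the expression collapses to $\tfrac{1}{2} n^{-\gamma} R'_n(s^*)^2 \bigl[t'c_0\widehat{c}_{t'} - tc_0\widehat{c}_t - \sigma_{t,t'}^2\bigr]$, which vanishes by the identity established in the first step.

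The main obstacle — really the only subtle point — is noticing the algebraic identity $tc_0\widehat{c}_t + \sigma_{t,t'}^2 = t'c_0\widehat{c}_{t'}$; once this is in hand, both \eqref{eq:snidentity} and \eqref{eq:Qnidentity} fall out from the defining relations with no further computation beyond bookkeeping.
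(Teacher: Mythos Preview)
Your proof is correct and rests on the same key algebraic identity $tc_0\widehat{c}_t + \sigma_{t,t'}^2 = t'c_0\widehat{c}_{t'}$ as the paper's. For \eqref{eq:snidentity} the arguments are essentially identical: both show that $s_n(v^*_n,t)$ satisfies the defining equation for $s_n(v,t')$ and appeal to uniqueness (the paper spells this out as a contraction near $0$ followed by analytic continuation, while you invoke the implicit function theorem; either is fine). For \eqref{eq:Qnidentity} the routes genuinely differ. The paper argues indirectly: it checks $Q_n(0,0;t,t')=0$ and then shows $\tfrac{d}{dv}Q_n(v^*_n,v;t,t')=R_n'(s_n(v,t'))-R_n'(s_n(v^*_n,t))=0$ using \eqref{eq:snidentity}. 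You instead evaluate $Q_n(v^*_n,v;t,t')$ directly, express each squared difference as a multiple of $n^{-2\gamma}R_n'(s^*)^2$, and reduce the whole expression to the algebraic identity again. Your computation is more explicit and self-contained; the paper's differentiation trick avoids handling the three squared terms individually but requires the extra observation \eqref{eq:Rnhatprime}. Both are short once the identity is in hand.
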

\begin{proof}
We recall that by \eqref{eq:snut2}
\begin{align} \label{eq:snvtp} 
		s_n(v,t') & = v - t' c_0 \widehat{c}_{t'} n^{-\gamma} R_n'(s_n(v,t')) \\
		 \nonumber
		s_n(v_n^*,t) & = v_n^* - t c_0 \widehat{c}_{t} n^{-\gamma} R_n'(s_n(v_n^*,t))  \\
		& = v  - (\sigma_{t,t'}^2 + tc_0 \widehat{c}_{t} ) n^{-\gamma} R_n'(s_n(v_n^*,t)) \label{eq:snvstart}
		\end{align}
where we used \eqref{def:vstar}.
From the explicit formulas for $\sigma_{t,t'}^2$, and $\widehat{c}_t$,
see \eqref{eq:sigmatt} and \eqref{eq:constants multi}, we get
$\sigma_{t,t'}^2 + t c_0 \widehat{c}_t = t' c_0 \widehat{c}_{t'}$.
Inserting this in \eqref{eq:snvstart}, we obtain
\[ s_n(v_n^*,t) = v - t'c_0 \widehat{c}_{t'}  n^{-\gamma} R_n'(s_n(v_n^*,t)), \]
and combining this with \eqref{eq:snvtp},	
\begin{equation} \label{eq:sndiff} 
	s_n(v_n^*,t) - s_n(v,t')
	= - t' c_0 \widehat{c}_{t'} n^{-\gamma} (R_n'(s_n(v_n^*,t)) - R_n'(s_n(v,t')).
	\end{equation}
Due to \eqref{eq:TaylorRn} we have $R_n'(\xi) = O(\xi^2)$ as $\xi \to 0$, uniformly in $n$.
The mapping $\xi \mapsto t' c_0 \widehat{c}_{t'} n^{-\gamma} R_n'(\xi)$
is therefore a contraction on some interval around $\xi = 0$. If $v$ is close
enough to zero then $v$ and $v_n^*$ are in this interval, and it follows
from \eqref{eq:sndiff} that $s_n(v_n^*,t) = s_n(v,t')$ for $v$ in some interval around $v=0$.
By analytic continuation the identity holds for all $v$. This proves \eqref{eq:snidentity}.
	
	\medskip
	
If $v=0$ then  $v_n^* = 0$ as well and it can be checked from \eqref{eq:snutlimit} 
and \eqref{eq:Rnhat} that $s_n(0,t) =0$
and $\widehat{R}_n(0,0,t) = 0$, and therefore $Q_n =0$.
Thus to establish \eqref{eq:Qnidentity} it will be enough to prove that $\frac{d}{dv} Q_n(v_n^*,v;t,t') = 0$, where we have to recall that 
$v_n^*$ depends on $v$.
Since $\frac{\partial Q_n}{\partial u}(v_n^*,v;t,t') = 0$,
we have by \eqref{def:Qn}, \eqref{eq:Rnhatprime} and \eqref{def:vstar}
\begin{align*}
		\frac{d}{dv} Q_n(v_n^*,v;t,t') 
			& = \frac{\partial Q_n}{\partial v}(v_n^*,v;t,t') \\
			& = R_n'(s_n(v,t'))
				+ \frac{n^{\gamma}}{\sigma_{t,t'}^2} (v_n^*-v)  \\
			& = R_n'(s_n(v,t')) - R_n'(s_n(v_n^*,t)),
		\end{align*}
		and this is zero because of \eqref{eq:snidentity}.
\end{proof}

We continue with the proof of \eqref{eq:Gnlimit1}.
Suppose $u$ and $v$ are in a compact set $K$.
Suppose $\varphi$ is a continuous function of two variables
with support in $K \times K$. Then it is a standard fact that
\begin{equation} \label{eq:Fnustarlimit}
\lim_{n\to\infty} \frac{\sqrt{n^{1-2\gamma}}}{\sqrt{2\pi} \sigma_{t,t'}}
\iint e^{- \frac{n^{1-2\gamma}}{2\sigma^2_{t,t'}}  (u-v_n^*)^2}  \varphi(u,v) dudv = 
\int \varphi(u,u) du,
\end{equation}
since $v_n^* \to v$ as $n \to \infty$. 

We conclude  
from \eqref{eq:Qnidentity}, \eqref{eq:Qnu} and \eqref{eq:Qnuu} that
\begin{equation} \label{eq:Qnestimate} 
	|Q_n(u,v;t,t')| \leq C_K (u-v_n^*)^2 \end{equation}
for some constant $C_K > 0$, which depends on $K$, but is independent 
of $n$.
Then it is rather straightforward to check that inclusion
of the factor $e^{n^{1-3\gamma} Q_n(u,v;t,t')}$ in the double
integral will not affect the limit \eqref{eq:Fnustarlimit}.

Indeed, the  double integral in \eqref{eq:Fnustarlimit}
is concentrated on the region where $|u-v_n^*| \leq n^{-1/2 + 4\gamma/3}$
as $n \to \infty$. In this region we have from 
\eqref{eq:Qnestimate} that $n^{1-3\gamma} |Q_n(u,v; t,t')| \leq C_K n^{-\gamma/3}$ and therefore
\[ e^{n^{1-3\gamma} Q_n(u,v; t, t')} = 1 + O(n^{-\gamma/3}) \]
as $n \to \infty$, uniformly for $u,v \in K$ with $|u-v_n^*| \leq n^{-1/2 + 4 \gamma/3}$.

On the other hand if $|u-v_n^*| \geq n^{-1/2 + 4 \gamma/3}$, then by \eqref{eq:Qnestimate}
\[ e^{-\frac{n^{1-2\gamma}}{2\sigma_{t,t'}^2} (u-v_n^*)^2} e^{n^{1-3\gamma} Q_n(u,v;t,t')} \leq e^{-C n^{1-2\gamma} (u-v_n^*)^2} 
	\leq e^{-Cn^{2 \gamma/3}} \]
for some constant $C > 0$. Thus the contribution of the region
where $|u-v_n^*| \geq n^{-1/2 + 4 \gamma/3}$ remains negligible as $n \to \infty$, also if we include the factor $  e^{n^{1-3\gamma} Q_n(u,v; t, t')}$
in the double integral in \eqref{eq:Fnustarlimit}.
It follows that
\begin{equation} \label{eq:Fnustarlimit2}
\lim_{n\to\infty} \frac{\sqrt{n^{1-2\gamma}}}{\sqrt{2\pi} \sigma_{t,t'}}
\iint  
	e^{- \frac{n^{1-2\gamma}}{2\sigma^2_{t,t'}}  (u-v_n^*)^2} 
	e^{n^{1-3\gamma} Q_n(u,v;t,t')} \varphi(u,v) dudv = \int \varphi(u,u) du.
\end{equation}
Then we obtain \eqref{eq:Gnlimit1} from \eqref{eq:Fnuv}, \eqref{eq:peaked2},
 and \eqref{eq:Fnustarlimit2}, which completes the proof of Theorem \ref{thm:multi}\ref{enu:thm:multi_b}.

\appendix

\section{Correlation kernel for nonintersecting Brownian motions} \label{sec:corr_kernel_multi_time}

First we consider the one-time distribution function of the nonintersecting Brownian motion model defined in Section \ref{section:Brownian}, and give a proof of the following proposition.

\begin{proposition} \label{prop:NIBM}
 Let $\X(t), t\in[0,1]$, consist of $n$ nonintersecting Brownian paths with confluent ending points $0$ at $t=1$ and with the marginal distribution at $t=0$ the same as the eigenvalues of a Hermitian $n\times n$ matrix $M$. The joint probability density function of  $\X(t) = (x_1(t), \dotsc, x_n(t))$ at time $t \in [0, 1)$ is the same as the one of the eigenvalues of $(1 - t)M + \sqrt{t(1 - t)} H$. This is true for a deterministic or random Hermitian matrix $M$.
\end{proposition}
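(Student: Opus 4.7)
My plan is to reduce to the case of deterministic $M$ and then compare two explicit joint densities: one obtained by combining the Karlin--McGregor formula with a confluent limit, the other obtained from the Harish-Chandra--Itzykson--Zuber (HCIZ) integral.

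\emph{Reduction to deterministic $M$.} Write $M = UAU^*$ with $A = \diag(a_1, \ldots, a_n)$ and $U$ unitary. By unitary invariance of the distribution \eqref{def:GUE}, the law of $H$ is the same as that of $U^* H U$, so the eigenvalues of $(1-t)M + \sqrt{t(1-t)}H$ depend on $M$ only through the vector $a$. On the Brownian side, the joint law of $\X(t)$ is determined by the joint law of $\X(0)$, which by assumption coincides with the eigenvalue distribution of $M$. Hence it suffices to prove the statement for deterministic $A$ with $a_1 < \cdots < a_n$.

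\emph{Joint density of $\X(t)$ via Karlin--McGregor.} Let $p_s(a,b) = \sqrt{n/(2\pi s)}\, \exp(-n(a-b)^2/(2s))$ denote the transition density of a Brownian motion with diffusion parameter $n^{-1/2}$. For fixed endpoints $a$ and $b$, with $b_1 < \cdots < b_n$, the Karlin--McGregor formula gives the joint density of $\X(t)$ conditioned on non-intersection on $[0,1]$:
\begin{equation}
p_t(x \mid a,b) = \frac{\det\!\bigl[p_t(a_i, x_j)\bigr]_{i,j=1}^n \det\!\bigl[p_{1-t}(x_i, b_j)\bigr]_{i,j=1}^n}{\det\!\bigl[p_1(a_i, b_j)\bigr]_{i,j=1}^n}.
\end{equation}
To take the confluent limit $b \to 0$, factor $p_s(x_i, b_j) = p_s(x_i, 0)\, e^{n x_i b_j/s}\, e^{-n b_j^2/(2s)}$, so that each determinant reduces to one of the form $\det[e^{n x_i b_j/s}]$ times a product. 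The standard Vandermonde-type limit
\begin{equation}
\lim_{b \to 0}\frac{\det\!\bigl[e^{n x_i b_j/s}\bigr]_{i,j=1}^n}{\Delta(b)} = \frac{(n/s)^{n(n-1)/2}}{\prod_{k=0}^{n-1} k!}\,\Delta(x)
\end{equation}
then applies to both numerator and denominator, with the common $\Delta(b)$ factor cancelling. After simplification one finds
\begin{equation}
\rho_{\X(t)}(x) \;\propto\; \det\!\bigl[e^{-n(a_i - x_j)^2/(2t)}\bigr]_{i,j=1}^n \cdot \Delta(x)\cdot \prod_{j=1}^n e^{-n x_j^2/(2(1-t))},
\end{equation}
up to a factor depending only on $n$, $t$, and $a$.

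\emph{Eigenvalue density of $M + \sqrt{\tau}H$ via HCIZ.} With $\tau = t/(1-t)$, the density of the Hermitian matrix $M + \sqrt{\tau}H$ is proportional to $\exp(-\frac{n}{2\tau}\Tr(X-M)^2)\,dX$. Diagonalize $X = V\Lambda V^*$; the eigenvalue Jacobian contributes $\Delta(\lambda)^2$, and integrating over $V$ via the HCIZ formula
\[
\int_{U(n)} e^{(n/\tau)\Tr(V^*MV\Lambda)}\,dV = \Bigl(\prod_{k=1}^{n-1}k!\Bigr)\frac{\det\!\bigl[e^{n a_i \lambda_j/\tau}\bigr]_{i,j=1}^n}{\Delta(a)\,\Delta(n\lambda/\tau)}
\]
yields
\begin{equation}
\rho_{M + \sqrt{\tau}H}(\lambda) \;\propto\; \frac{\Delta(\lambda)}{\Delta(a)}\,\det\!\bigl[e^{-n(\lambda_j - a_i)^2/(2\tau)}\bigr]_{i,j=1}^n.
\end{equation}
Finally, change variables $x_j = (1-t)\lambda_j$, noting $\Delta(\lambda) = (1-t)^{-n(n-1)/2}\Delta(x)$ and a Jacobian $(1-t)^{-n}$, and use $\tau(1-t)^2 = t(1-t)$ together with $\tau(1-t) = t$ to rewrite the three Gaussian factors $e^{-n(\lambda_j - a_i)^2/(2\tau)} = e^{-n a_i^2/(2\tau)}\,e^{n a_i \lambda_j/\tau}\,e^{-n\lambda_j^2/(2\tau)}$. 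After collecting $x$-dependent factors, this matches the expression from the Karlin--McGregor step up to a constant depending only on $n$, $t$, and $a$. Since both are probability densities, equality follows. Averaging over the law of $a$ recovers the random-$M$ case.

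\emph{Main obstacle.} The one nontrivial ingredient is the confluent limit $b \to 0$ in Karlin--McGregor: one must argue carefully that the Vandermonde factors $\Delta(b)$ appearing to leading order in the numerator and denominator cancel with the correct coefficients, so that the limiting density is well defined and has exactly the form above. The remaining matching in the final step is a bookkeeping exercise in Gaussian factors and Vandermonde rescaling.
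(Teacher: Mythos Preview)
Your proposal is correct and follows essentially the same route as the paper: Karlin--McGregor with the confluent limit $b\to 0$ (via the same Vandermonde-type expansion) on the Brownian side, and the external-source eigenvalue density on the random matrix side, matched after the rescaling $x_j=(1-t)\lambda_j$. The only cosmetic difference is that the paper quotes the external-source density formula directly from the literature, whereas you derive it from the HCIZ integral; the computations and the identification of the two densities are otherwise the same.
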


\begin{proof}
By a well-known result in the GUE with external source (see \cite{Bleher-Kuijlaars05, Brezin-Hikami96, Johansson01a}), for a fixed matrix $M$ with eigenvalues $a_1 < a_2 < \dotsb < a_n$, the density function of the eigenvalues of $M + \sqrt{\tau}H$ is given by
\begin{equation}\label{eq:jpdfextsource}
  P(x_1, \dotsc, x_n) = \frac{n^{n/2} \prod^n_{j = 1} e^{-\frac{n a^2_j}{2\tau}}}{n! (2\pi \tau)^{n/2} \Delta_n(a)} \Delta_n(x) \det \left( e^{\frac{n a_j x_k}{\tau}} \right)^n_{j, k = 1} \prod^n_{k = 1} e^{-\frac{nx^2_k}{2\tau}},
\end{equation}
where $\Delta_n(x)=\prod_{1\leq i<j\leq n}(x_j-x_i)$, and similarly for $\Delta_n(a)$. If $M$ is random and the eigenvalues have a distribution $\nu(a_1, \dotsc, a_n)$, then the joint probability density function of the eigenvalues of $M + \sqrt{\tau} H$ is given by
\begin{multline} \label{eq:density_M_sqrt_tau_H}
  P(x_1, \dotsc, x_n) = \frac{n^{n/2}}{n! (2\pi \tau)^{n/2}} \int \frac{ \prod^n_{j = 1} e^{-\frac{n a^2_j}{2\tau}}}{\Delta_n(a)}\det \left( e^{\frac{n a_j x_k}{\tau}} \right)^n_{j, k = 1} d\nu(a_1, \dots, a_n) \\
  \times \Delta_n(x) \prod^n_{k = 1} e^{-\frac{nx^2_k}{2\tau}},
\end{multline}
if the measure $\nu$ is such that the integral exists.

On the other hand, suppose that particles $x_1(t), \dotsc, x_n(t)$ in independent Brownian bridges with diffusion parameter $n^{-1/2}$ are placed at positions  $a_1 < a_2 < \dotsb < a_n$ at the initial time $t = 0$, and their ending positions at time $t = 1$ are $b_1 < \dotsb < b_n$. By the Karlin--McGregor theorem \cite{Karlin-McGregor59}, one can compute the probability that the Brownian paths do not intersect, which is given by
\begin{equation}\label{eq:Zn}
P_n=  \det \left( \frac{\sqrt{n}}{\sqrt{2\pi}} e^{-\frac{n(b_k - a_j)^2}{2}} \right)^n_{j, k = 1} = \left( \frac{n}{2\pi} \right)^{\frac{n}{2}} \prod^n_{j = 1} e^{-n \left( \frac{a^2_j}{2} + \frac{b^2_j}{2} \right)} \det \left( e^{n b_k a_j} \right)^n_{j, k = 1}.
\end{equation}
If we only consider the nonintersecting Brownian paths, the joint probability density function of the particles at time $t \in (0, 1)$ is
\begin{equation}
  \begin{split}
     P(x_1, \dotsc, x_n) &= \frac{1}{P_n}\det \left( \frac{\sqrt{n}}{\sqrt{2\pi t}} e^{-\frac{n(x_k - a_j)^2}{2t}} \right)^n_{j, k = 1} \det \left( \frac{\sqrt{n}}{\sqrt{2\pi(1 - t)}} e^{-\frac{n(x_k - b_l)^2}{2(1 - t)}} \right)^n_{k, l = 1} \\
    &=  \left( \frac{n}{2\pi t(1 - t)} \right)^{n/2} \prod^n_{j = 1} e^{-\frac{n(1-t) a^2_j}{2t}} \prod^n_{k = 1} e^{-\frac{nx^2_k}{2t(1 - t)}} \prod^n_{l = 1} e^{-\frac{n t b^2_l}{2(1 - t)}}\\
    &\qquad\qquad\times \det \left( e^{\frac{n a_j x_k}{t}} \right)^n_{j, k = 1} \frac{\det \left( e^{\frac{nx_k b_l}{1 - t}} \right)^n_{k, l = 1}}{\det \left( e^{n b_k a_j} \right)^n_{j, k = 1}}.
  \end{split}
\end{equation}
Now we take the limit where $b_j \to 0$ for all $j = 1, \dotsc, n$. We use the fact that
\begin{equation} \label{eq:b_to_0_limit}
  \lim_{b_j \to 0} \det \left( e^{\frac{c x_k b_l}{2}} \right)^n_{k, l = 1} \left/   \Delta_n(b)  \right. = c^{\frac{n(n - 1)}{2}} \prod^{n - 1}_{j = 0} j! \Delta_n(x).
\end{equation}
This implies that
\begin{equation}
     P(x_1, \dotsc, x_n) =  \frac{1}{C_n(t)} \frac{\Delta_n(x)}{\Delta_n(a)} \prod^n_{j = 1} e^{-\frac{n(1-t) a^2_j}{2t}} \prod^n_{k = 1} e^{-\frac{nx^2_k}{2t(1 - t)}} 
     \det \left( e^{\frac{n a_j x_k}{t}} \right)^n_{j, k = 1},
\end{equation}
for some constant $C_n(t)$ depending on $n$ and $t$ but not on the starting points. After the rescaling $x_k\mapsto (1-t)x_k$, this is the same as \eqref{eq:jpdfextsource} if we set $\tau=\frac{t}{1-t}$.
If the initial positions $a_j=x_j(0)$ are random with distribution $\nu(a_1,\ldots, a_n)$, the joint probability density function of the non-colliding particles at time $t$ becomes
\begin{multline}
  P(x_1, \dotsc, x_n) = \frac{1}{C_n(t)}
\int \frac{1}{\Delta_n(a)} \prod^n_{j = 1} e^{-\frac{n(1-t) a^2_j}{2t}}\det \left( e^{\frac{n a_j x_k}{t}} \right)^n_{j, k = 1} d\nu(a_1,\ldots, a_n)\\ \times \Delta_n(x) \prod^n_{k = 1} e^{-\frac{nx^2_k}{2t(1 - t)}},
\end{multline}
which becomes \eqref{eq:density_M_sqrt_tau_H} after the rescaling $x_k\mapsto (1-t)x_k$ and after setting $\tau=\frac{t}{1-t}$.
\end{proof}

Now we consider the multi-time distribution and the proof of formulas \eqref{eq:mult_kernel}--\eqref{eq:mult_kernel_tildePE}, which are a generalization of \cite[Theorem 2.3]{Claeys-Kuijlaars-Wang15}.
\begin{proposition} \label{prop:NIBM2}
  Let $\X(t), t\in[0,1]$, consist of $n$ nonintersecting Brownian paths with confluent ending points $0$ at $t=1$ and with the marginal distribution
  \begin{equation}\label{eq:jpdf a}
    \frac{1}{Z_n}\Delta_n(a)^2 \prod_{j=1}^n e^{-n V(a_j)}da_1\dotsc da_n.
  \end{equation}
  of the random starting points $a_1<\dotsb< a_n$. Let $m\in\mathbb N$ and fix $0<t_1<\dotsb<t_m<1$. The multi-time correlation kernel $K_n(x,y;t,t')$ for the particles $\X(t_1), \X(t_2),\ldots, \X(t_m)$ is given by \eqref{eq:mult_kernel}--\eqref{eq:mult_kernel_tildePE}.
\end{proposition}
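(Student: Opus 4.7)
The plan is to apply the Eynard--Mehta theorem \cite{Eynard-Mehta98, Borodin-Rains05} after writing the joint density of $(\X(t_1),\ldots,\X(t_m))$ in the standard biorthogonal-ensemble form. Conditioning first on a fixed starting vector $\mathbf{a}=\X(0)$ and fixed endpoints $\mathbf{b}$, the Karlin--McGregor formula expresses the conditional multi-time density as a product of Gaussian determinants $\det[p_{t_{k+1}-t_k}(x^{(k)}_i,x^{(k+1)}_j)]$, with $p_s(x,y)=\sqrt{n/(2\pi s)}\exp(-n(x-y)^2/(2s))$ and $x^{(0)}=\mathbf{a}$, $x^{(m+1)}=\mathbf{b}$. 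Taking the confluent limit $b_1,\ldots,b_n\to0$ via \eqref{eq:b_to_0_limit} replaces the last determinant by $\Delta_n(\mathbf{x}^{(m)})\prod_j e^{-n(x^{(m)}_j)^2/(2(1-t_m))}$ up to an $\mathbf{a}$- and $\mathbf{x}$-independent constant, and then multiplying by the starting density \eqref{eq:jpdf a} gives the joint density of $(\mathbf{a},\X(t_1),\ldots,\X(t_m))$ as a normalization times
\begin{equation*}
\Delta_n(\mathbf{a})^2\prod_j e^{-nV(a_j)}\,\det[p_{t_1}(a_i,x^{(1)}_j)]\prod_{k=1}^{m-1}\det[p_{t_{k+1}-t_k}(x^{(k)}_i,x^{(k+1)}_j)]\,\Delta_n(\mathbf{x}^{(m)})\,e^{-\frac{n}{2(1-t_m)}\sum_j (x^{(m)}_j)^2}.
\end{equation*}

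Next I would put this into Eynard--Mehta form. Writing one copy of $\Delta_n(\mathbf{a})$ as $\det[p_{k-1,n}(a_j)]$ (up to the product of leading coefficients) and $\Delta_n(\mathbf{x}^{(m)})$ as $\det[(x^{(m)}_j)^{k-1}]$, and integrating out $\mathbf{a}$, the Borodin--Rains theorem \cite{Borodin-Rains05} then identifies $(\X(t_1),\ldots,\X(t_m))$ as a determinantal process with kernel of the shape
\begin{equation*}
K^{\X}_n(x,y;t,t')=-\mathbf{1}_{t<t'}G_n(x,y;t,t')+\sum_{j,k=0}^{n-1}\Phi_j(x;t)(A^{-1})_{jk}\Psi_k(y;t'),
\end{equation*}
where $G_n(x,y;t,t')$ is the forward convolution along a Brownian bridge pinned at $0$ at time $1$; an elementary Gaussian computation identifies this with \eqref{eq:Gnxy}. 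The functions $\Phi_j(x;t)$ are the forward propagations of the polynomials $p_{j-1,n}$ through the bridge from time $0$ to time $t$, and $\Psi_k(y;t')$ are the corresponding backward propagations (with the weight $e^{-nV}$), both explicit Gaussian integrals.

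The remaining step is to simplify the double sum. Exactly as in the single-time argument of \cite[Theorem 2.3]{Claeys-Kuijlaars-Wang15}, the choice of orthonormal polynomials for $e^{-nV}$ trivializes the Gram matrix $A$ (using \eqref{eq:defn_p_jn} together with the fact that Gaussian bridge propagation preserves biorthogonality), so that $A^{-1}=\mathrm{Id}$. Substituting the Gaussian integrals for $\Phi_j$ and $\Psi_j$ and applying Fubini, the double sum becomes
\begin{equation*}
\frac{n}{2\pi\sqrt{tt'}}\int_{\mathbb R}dz\int_{\mathbb R}dw\; e^{\frac{n(x-(1-t)z)^2}{2t(1-t)}-\frac{n(y-(1-t')w)^2}{2t'(1-t')}}\sum_{j=0}^{n-1}p_{j,n}(z)p_{j,n}(w)e^{-nV(w)},
\end{equation*}
in which the inner sum is $K^{\PE}_n(z,w)$ by \eqref{def:KnPE}. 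Deforming the $z$-contour from $\mathbb R$ to $x^*+i\mathbb R$ is permitted by analyticity of $K^{\PE}_n(z,w)$ in $z$ (it is a polynomial in $z$) combined with the Gaussian decay of $e^{n(x-(1-t)z)^2/(2t(1-t))}$ along vertical lines, and it produces exactly \eqref{eq:mult_kernel_tildePE}; the single-time formula \eqref{eq:KnX} is then recovered by setting $t=t'=\tau/(1+\tau)$ and using \eqref{eq:single time equiv}, a consistency check with \cite{Claeys-Kuijlaars-Wang15}. The main technical nuisance will be the bookkeeping of the normalizing constants that accumulate through the Karlin--McGregor formula, the confluent limit, the Vandermonde-to-orthonormal-polynomial rewriting, and the Eynard--Mehta machinery; once these cancel correctly, the argument is routine.
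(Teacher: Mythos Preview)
Your overall strategy matches the paper's almost exactly: Karlin--McGregor, confluent limit, integrate out the random starting points, Eynard--Mehta, and the observation that the Gram matrix is the identity by orthonormality. The paper carries this out with essentially the same bookkeeping (using the Andr\'eief formula to integrate out $\mathbf a$, and defining $\phi_j$, $\psi_j$, $W_{[i,j)}$, $M$ explicitly).

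There is, however, a genuine gap at the last step. The double integral you write over $\mathbb R\times\mathbb R$ is divergent: the exponent $+\tfrac{n(x-(1-t)z)^2}{2t(1-t)}$ tends to $+\infty$ quadratically as $z\to\pm\infty$ along $\mathbb R$, and the polynomial factor cannot compensate. Consequently your subsequent sentence about ``deforming the $z$-contour from $\mathbb R$ to $x^*+i\mathbb R$'' has no meaning --- there is no convergent integral to deform. This is not a cosmetic issue; it reflects a missing idea. The functions coming from the confluent endpoint (your $\Phi_j(x;t)$, the paper's $(W_{[i,m)}\Psi)_l$) are, after the Vandermonde expansion, essentially $x^{l-1}e^{-nx^2/(2(1-t))}$; to obtain the form \eqref{eq:mult_kernel_tildePE} one must first \emph{represent} these as contour integrals over a vertical line,
\[
\psi_l(x)\ \propto\ e^{-\frac{nx^2}{2(1-t_m)}}\int_{-i\infty}^{+i\infty} p_{l-1}(s)\,e^{\frac{n(x-(1-t_m)s)^2}{2t_m(1-t_m)}}\,ds,
\]
which is convergent precisely because $s$ runs along the imaginary direction. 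The paper does this explicitly (and it is also the mechanism behind \cite[Theorem~2.3]{Claeys-Kuijlaars-Wang15}). Once that representation is in place, propagating through the Gaussian transitions $W_{[i,m)}$ preserves the vertical contour, and one lands directly on \eqref{eq:mult_kernel_tildePE} without any illegal deformation. Insert this step and your argument goes through.
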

\begin{proof}
First consider the Brownian paths with diffusion parameter $n^{-1/2}$, starting at $a_1 < \dotsb < a_n$ and ending at $b_1 < \dotsb < b_n$. If we require that the Brownian paths are nonintersecting, then similar to the single-time density, the density of the particles at times $t_1 < t_2 < \dotsb < t_m \in (0, 1)$ is, if we write $\vec{x}^{(k)} = (x^{(k)}_1, \dotsc, x^{(k)}_n)$,
\begin{multline}
  P(\vec{x}^{(1)}, \dotsc, \vec{x}^{(m)}) = \frac{1}{P_n}\det \left( \sqrt{\frac{n}{2\pi t_1}} e^{-\frac{n(x^{(1)}_k - a_j)^2}{2t_1}} \right)^n_{j, k = 1} \\
  \times \det(W_1(x^{(1)}_j, x^{(2)}_k)) \dotsm \det(W_{m - 1}(x^{(m - 1)}_j, x^{(m)}_k)) \det \left(\sqrt{\frac{n}{2\pi (1-t_m)}} e^{-\frac{n(x^{(m)}_k - b_l)^2}{2(1 - t_m)}} \right)^n_{k, l = 1},
\end{multline}
where
\begin{equation}
  W_i(x, y) = \frac{\sqrt{n}}{\sqrt{2\pi(t_{i+1}-t_i)}} e^{-\frac{n(y - x)^2}{2(t_{i + 1} - t_i)}} \quad \text{for} \quad i = 1, \dotsc, m - 1.
\end{equation}
 Letting $b_j \to 0$, using a similar calculation as for the single-time density, we have that the multi-time density function at times $t_1 < t_2 < \dotsb < t_m \in (0, 1)$ of the particles  is given by
\begin{multline} \label{eq:mult_time_density_X(t)}
  P(\vec{x}^{(1)}, \dotsc, \vec{x}^{(m)}) = \frac{1}{C_n(t_1,\dotsc, t_m)} \frac{1}{\Delta_n(a)} \prod^n_{j = 1}e^{-\frac{n(1-t_1) a^2_j}{2t_1}} \prod^n_{k = 1} e^{-\frac{n(x^{(1)}_k)^2}{2t_1}} 
     \det \left( e^{\frac{n a_j x_k^{(1)}}{t_1}} \right)^n_{j, k = 1} \\
  \times \det(W_1(x^{(1)}_j, x^{(2)}_k)) \dotsm \det(W_{m - 1}(x^{(m - 1)}_j, x^{(m)}_k)) \Delta_n(x^{(m)}) \prod^n_{k = 1} e^{-\frac{n(x^{(m)}_k)^2}{2(1 - t_m)}},
\end{multline}
for some constant $C_n(t_1,\dotsc,t_m)$ not depending on the $a_j$'s.

If we let the initial positions $a_1, \dotsc, a_n$ be random with distribution \eqref{eq:jpdf a}, then the density of the particles at times $t_1 < t_2 < \dotsb < t_m \in (0, 1)$ is given by
\begin{multline}
  P(\vec{x}^{(1)}, \dotsc, \vec{x}^{(m)}) =  \frac{1}{C_n(t_1,\ldots,t_m)Z_n}\\
  \times \,  \int_{-\infty < a_1 < \dotsb < a_n < \infty} \Delta_n(a) \prod^n_{j = 1}e^{\frac{n a^2_j}{2}-nV(a_j)}  
     \det \left( e^{-\frac{n (a_j -x_k^{(1)})^2}{2t_1}} \right)^n_{j, k = 1} da_1\dotsc da_n \\
  \times\ \det(W_1(x^{(1)}_j, x^{(2)}_k)) \dotsm \det(W_{m - 1}(x^{(m - 1)}_j, x^{(m)}_k)) \Delta_n(x^{(m)})  \prod^n_{k = 1} e^{-\frac{n(x^{(m)}_k)^2}{2(1 - t_m)}}.
\end{multline}
Now using the \Andreief\ formula, we have
\begin{multline}
  \int_{-\infty < a_1 < \dotsb < a_n < \infty} \Delta_n(a) \prod_{j=1}^n e^{-n V(a_j) +n a^2_j/2}\det \left(  e^{-\frac{n(x^{(1)}_k - a_j)^2}{2t_1}} \right)^n_{j, k = 1} da_1 \dotsm da_n  \\
=  \det \left(  \int^{\infty}_{-\infty} a^{j - 1} e^{-n V(a) + na^2/2}e^{- \frac{n(x^{(1)}_k - a)^2}{2t_1}} da \right)^n_{j, k = 1},
\end{multline}
and
\begin{multline}
  P(\vec{x}^{(1)}, \dotsc, \vec{x}^{(m)}) = \frac{1}{C_n(t_1,\dotsc, t_n)Z_n} \det \left(  \int^{\infty}_{-\infty} a^{j - 1}e^{-n V(a) + na^2/2} e^{- \frac{n(x^{(1)}_k - a)^2}{2t_1}} da \right)^n_{j, k = 1} \\
  \times \det(W_1(x^{(1)}_j, x^{(2)}_k)) \dotsm \det(W_{m - 1}(x^{(m - 1)}_j, x^{(m)}_k)) \Delta_n(x^{(m)})  \prod^n_{k = 1} e^{-\frac{n(x^{(m)}_k)^2}{2(1 - t_m)}}.
\end{multline}
Then by elementary linear operations, we can rewrite this as
\begin{multline}
  P(\vec{x}^{(1)}, \dotsc, \vec{x}^{(m)}) = \frac{1}{C_n'(t_1, \dotsc, t_m)} \det(\phi_j(x^{(1)}_k)) \det(W_1(x^{(1)}_j, x^{(2)}_k)) \dotsm  \\
  \times \det(W_{m - 1}(x^{(m - 1)}_j, x^{(m)}_k)) \det(\psi_j(x^{(m)}_k)),
\end{multline}
where
\begin{align}
  \phi_j(x) = {}& \frac{\sqrt{n}}{\sqrt{2\pi t_1}} \int^{\infty}_{-\infty} p_{j - 1}(a) e^{-n V(a) + na^2/2} e^{-\frac{n(x - a)^2}{2t_1}} da, \\
  \psi_j(x) = {}&\frac{\sqrt{n}}{\sqrt{2\pi t_m} i}  \left( \int^{+i\infty}_{-i\infty} p_{j - 1}(s) e^{\frac{n(x - (1 - t_m)s)^2}{2t_m(1 - t_m)}} ds \right) e^{-\frac{nx^2}{2(1 - t_m)}},
\end{align} $p_j(x) = p_{j, n}(x)$ are the orthogonal polynomials  defined in \eqref{eq:defn_p_jn}, and $C_n'(t_1, \dotsc, t_m)$ is a normalization constant. Below we apply the Eynard--Mehta theorem \cite{Eynard-Mehta98} to write down the correlation kernel for $\vec{x}^{(1)}, \dotsc, \vec{x}^{(m)}$. First we define preliminary notations. Let the operator $\Phi: L^2(\mathbb R) \to \ell^2(n)$ be 
\begin{equation}
  \Phi(f(x)) = \left( \int^{\infty}_{-\infty} f(x) \phi_1(x) dx, \dotsc, \int^{\infty}_{-\infty} f(x) \phi_n(x) dx \right)^T,
\end{equation}
and let the operator $\Psi: \ell^2(n) \to L^2(\mathbb R)$ be
\begin{equation}
  \Psi((v_1, \dotsc, v_n)^T) = \sum^n_{k = 1} v_k \psi_k(x).
\end{equation}
We interpret $W_k$ as the kernel of an integral operator from $L^2(\mathbb R)$ to $L^2(\mathbb R)$, and also use it to represent the integral operator itself by abuse of notation. Then we define the operators
\begin{equation} \label{eq:W_[i,j)_with_circ}
  W_{[i, j)} :=
  \begin{cases}
    W_i W_{i+1}\dotsm W_{j - 1} & \text{for $i < j$}, \\
    \Id & \text{for $i = j$}, \\
    0 & \text{for $i >j$},
  \end{cases}
  \quad \text{and} \quad
  \Wcirc_{[i, j)} :=
  \begin{cases}
    W_iW_{i+1} \dotsm W_{j - 1} & \text{for $i < j$}, \\
    0 & \text{for $i \geq j$}.
  \end{cases}
\end{equation}
We also define the operator $M: \ell^2(n) \to \ell^2(n)$ as
\begin{equation} \label{eq:defn_M}
  M := \Phi W_{[1, m)} \Psi,
\end{equation}
which is represented by the $n \times n$ matrix
\begin{equation}
  M_{ij} = \idotsint_{\mathbb R^m} \phi_i(x_1) W_1(x_1, x_2) \dotsm 
  W_{m - 1}(x_{m - 1}, x_m) \psi_j(x_m) dx_1 \dotsm dx_m.
\end{equation}
Then the correlation kernel $K^{\X}_n(x, y; t_i,t_j)$ for $\vec{x}^{(1)}, \dotsc, \vec{x}^{(m)}$ is given by (see \cite{Eynard-Mehta98, Borodin-Rains05}),
\begin{equation} \label{eq:two_parts_of_Kfirst}
  K^{\X}_n(x, y; t_i,t_j) = \widehat{K}^{\X}_n(x, y; t_i,t_j) - \Wcirc_{[i, j)} \quad \text{and} \quad \widehat{K}_n^{\X}(x, y;t_i, t_j) = W_{[i, m)} \Psi M^{-1} \Phi W_{[1, j)}.
\end{equation}
The correlation kernel is only defined up to multiplication with a factor of the form $\frac{F(x;t)}{F(y;t')}$, and it will be convenient for us to work with such a modified representation of the kernel, namely
\begin{equation} \label{eq:two_parts_of_K}
  K^{\X}_n(x, y; t_i,t_j) = \widetilde{K}^{\X}_n(x, y; t_i,t_j) - \frac{e^{\frac{nx^2}{2(1 - t_i)}}}{e^{\frac{ny^2}{2(1 - t_j)}}}\Wcirc_{[i, j)} \quad \text{and} \quad \widetilde{K}_n^{\X}(x, y; t_i, t_j) =\frac{e^{\frac{nx^2}{2(1 - t_i)}}}{e^{\frac{ny^2}{2(1 - t_j)}}} W_{[i, m)} \Psi M^{-1} \Phi W_{[1, j)}.
\end{equation}

It is straightforward to compute that if $i < j$, then
\begin{equation}
  W_{[i, j)}(x, y) = \Wcirc_{[i, j)}(x, y) = \frac{\sqrt{n}}{\sqrt{2\pi(t_j - t_i)}} e^{-\frac{n(x - y)^2}{2(t_j - t_i)}},
\end{equation}
which is equivalent to $G_n(x, y; t, t')$ in \eqref{eq:Gnxy} with $t = t_i$ and $t' = t_j$.

The operator $\Phi W_{[1, j)}$ is from $L^2(\mathbb R)$ to $\ell^2(n)$, and is represented by an $n$-dimensional column operator. Its $l$-th component is, analogous to $Q_k$ in \cite[Formula (2.7)]{Claeys-Kuijlaars-Wang15},
\begin{equation}
  \begin{split}
    (\Phi W_{[1, j)})_l(x) 
    = {}& \int^{\infty}_{-\infty} \phi_l(y) W_{[1, j)}(y, x) dy \\
    = {}& \int^{\infty}_{-\infty} \left( \frac{\sqrt{n}}{\sqrt{2\pi t_1}} \int^{\infty}_{-\infty} p_{l - 1}(a) e^{-nV(a)+na^2/2} e^{-\frac{n(y - a)^2}{2t_1}} da \right) \frac{\sqrt{n}}{\sqrt{2\pi(t_j - t_1)}} e^{-\frac{n(y - x)^2}{2(t_j - t_1)}} dy \\
    = {}& \int^{\infty}_{-\infty} p_{l - 1}(a) e^{-nV(a)+na^2/2} \left( \frac{\sqrt{n}}{\sqrt{2\pi t_1}} \int_{-\infty}^{+\infty}e^{-\frac{n(y - a)^2}{2t_1}} \frac{\sqrt{n}}{\sqrt{2\pi(t_j - t_1)}} e^{-\frac{n(y - x)^2}{2(t_j - t_1)}} dy \right) da \\
    = {}& \frac{\sqrt{n}}{\sqrt{2\pi t_j}} \int^{\infty}_{-\infty} p_{l - 1}(a) e^{-nV(a)+na^2/2} e^{-\frac{n(x - a)^2}{2t_j}} da.
  \end{split}
\end{equation}
Similarly, the operator $W_{[j, m)} \Psi$ is from $\ell^2(n)$ to $L^2(\mathbb R)$, and is represented by an $n$-dimensional row vector. Its $l$-th component is, analogous to $P_k$ in \cite[Formula (2.6)]{Claeys-Kuijlaars-Wang15},
\begin{equation}
  \begin{split}
    & (W_{[j, m)} \Psi)_l(x) \\
    = {}& \int^{\infty}_{-\infty} W_{[j, m)}(x, y) \psi_l(y) dy \\
    = {}& \int^{\infty}_{-\infty} \frac{\sqrt{n}}{\sqrt{2\pi (t_m - t_j)}} e^{-\frac{n(x - y)^2}{2(t_m - t_j)}} \frac{\sqrt{n}}{\sqrt{2\pi t_m} i} \left( \int^{+i\infty}_{-i\infty} p_{l - 1}(s) e^{\frac{n(y - (1 - t_m)s)^2}{2t_m(1 - t_m)}} ds \right) e^{-\frac{n y^2}{2(1 - t_m)}} dy \\
    = {}& \int^{+i\infty}_{-i\infty} p_{l - 1}(s) \int^{\infty}_{-\infty} \frac{\sqrt{n}}{\sqrt{2\pi (t_m - t_j)}} e^{-\frac{n(x - y)^2}{2(t_m - t_j)}} \frac{\sqrt{n}}{\sqrt{2\pi t_m} i} e^{\frac{n(y - (1 - t_m)s)^2}{2t_m(1 - t_m)}} e^{-\frac{n y^2}{2(1 - t_m)}} dy\,ds \\
    = {}& \frac{\sqrt{n}}{\sqrt{2\pi t_j} i} \left( \int^{+i\infty}_{-i\infty} p_{l - 1}(s) e^{\frac{n(x - (1 - t_j)s)^2}{2t_j(1 - t_j)}} ds \right) e^{-\frac{n x^2}{2(1 - t_j)}}.
  \end{split}
\end{equation}
The matrix $M$ is the identity matrix, since analogous to \cite[Formulas (3.8) and (3.9)]{Claeys-Kuijlaars-Wang15},
\begin{equation}
  \begin{split}
    M_{j, k} = {}& \int^{\infty}_{-\infty} \phi_j(x) (W_{[1, m)} \Psi)_k(x) dx \\
    = {}& \frac{n}{2\pi it_1} \int^{\infty}_{-\infty} \left( \int^{\infty}_{-\infty} p_{j - 1}(a) e^{-nV(a)+na^2/2} e^{-\frac{n(x - a)^2}{2t_1}} da \right) \\
    & \phantom{\frac{n}{2\pi t_1} \int^{\infty}_{-\infty}} \times \left( \int^{+i\infty}_{-i\infty} p_{k - 1}(s) e^{\frac{n(x - (1 - t_1)s)^2}{2t_1(1 - t_1)}} ds \right) e^{-\frac{n x^2}{2(1 - t_1)}} dx \\
    = {}& \frac{n}{2\pi it_1} \int^{\infty}_{-\infty} \left( \int^{\infty}_{-\infty} p_{j - 1}(a) e^{-n V(a)} e^{-\frac{n(x - (1 - t_1)a)^2}{2t_1(1 - t_1)}} da \right) \\
    & \phantom{\frac{n}{2\pi t_1} \int^{\infty}_{-\infty}} \times \left( \int^{+i\infty}_{-i\infty} p_{k - 1}(s) e^{\frac{n(x - (1 - t_1)s)^2}{2t_1 (1 - t_1)}} ds \right) dx \\
    = {}& \delta_{j, k},
  \end{split}
\end{equation}
after a straightforward calculation in which we use the orthogonality relation \eqref{eq:defn_p_jn}.

So finally we arrive at the formula,
\begin{equation}
  \begin{split}
    \widetilde{K}^\X_n(x, y;t_j, t_k) = {}& \frac{n}{2\pi \sqrt{t_j t_k} i} \frac{e^{\frac{nx^2}{2(1 - t_j)}}}{e^{\frac{ny^2}{2(1 - t_k)}}}\sum^{n - 1}_{l = 0} \left( \int^{+i\infty}_{-i\infty} p_{l - 1}(s) e^{\frac{n(x - (1 - t_j)s)^2}{2t_j(1 - t_j)}} ds \right) e^{-\frac{n x^2}{2(1 - t_j)}} \\
    & \phantom{\frac{n}{2\pi \sqrt{t_j t_k} i} \sum^{n - 1}_{l = 0}} \times \int^{\infty}_{-\infty} p_{l - 1}(w) e^{-nV(w)+nw^2/2} e^{-\frac{n(y - w)^2}{2t_k}} dw \\
    = {}&  \frac{n}{2\pi \sqrt{t_j t_k} i} \int^{+i\infty}_{-i\infty} ds \int^{\infty}_{-\infty} dw \left( \sum^{n - 1}_{l = 0} p_{l - 1}(s) p_{l - 1}(w) e^{-nV(w)} \right) \frac{e^{\frac{n(x - (1 - t_j)s)^2}{2t_j (1 - t_j)}}}{e^{\frac{n(y - (1 - t_k)w)^2}{2t_k(1 - t_k)}}} \\
    = {}&  \frac{n}{2\pi \sqrt{t_j t_k} i} \int^{+i\infty}_{-i\infty} ds \int^{\infty}_{-\infty} dw\, K^{\PE}_n(s, w) \frac{e^{\frac{n(x - (1 - t_j)s)^2}{2t_j (1 - t_j)}}}{e^{\frac{n(y - (1 - t_k)w)^2}{2t_k(1 - t_k)}}},
  \end{split}
\end{equation}
which is equivalent to \eqref{eq:mult_kernel_tildePE} with $t = t_j$ and $t' = t_k$. Thus we prove Proposition \ref{prop:NIBM2}.
\end{proof}

\section{Summary of the Riemann--Hilbert analysis for unitary ensembles and proofs of Proposition \ref{prop:prop17} and Lemma \ref{lem:Y}} \label{sec:RHP_summary}


In this appendix we prove Proposition \ref{prop:prop17} and Lemma \ref{lem:Y} as consequences of the analysis in \cite{Deift-Kriecherbauer-McLaughlin-Venakides-Zhou99}. 
We discuss the singular interior point case and the singular edge point case separately, and we consider only the singular right edge point case. Note that although in both Proposition \ref{prop:prop17} and Lemma \ref{lem:Y}, the singular point $x^*$ may not be the unique one, for the sake of notational simplicity we prove them only in the case that $x^*$ is the unique singular point, either in the interior or at a right edge. If other singular points exist, we just need to construct more local parametrices and the matrix $R(z) = I + O(n^{-c})$ with $c$ depending on all singular points, see the discussion in the end of \cite[Section 5]{Deift-Kriecherbauer-McLaughlin-Venakides-Zhou99}. All other arguments do not change.

Following the notations in \cite[Figure 1.1]{Deift-Kriecherbauer-McLaughlin-Venakides-Zhou99}, we assume that the support of the equilibrium measure $\mu_0$ is $J = [b_0, a_1] \cup \dotsb \cup [b_N, a_{N + 1}]$, where $-\infty < b_0 < a_1 < \dotsb < b_N < a_{N + 1} < \infty$. 

\subsection{Singular interior point case} \label{subsec:interior_RHP}

We assume as in \cite[Section 5.2]{Deift-Kriecherbauer-McLaughlin-Venakides-Zhou99} that $x^* = \hat{z} \in (b_{j - 1}, a_j) \subseteq J$ is the 
only singular point with exponent $\kappa$.

The strategy in \cite{Deift-Kriecherbauer-McLaughlin-Venakides-Zhou99} is as follows. First the matrix-valued function $Y_n(z)$ defined 
in \eqref{def:Yn} (the same as $Y(z)$ defined in \cite[Formula (2.2) and Formulas (1.86)--(1.88)]{Deift-Kriecherbauer-McLaughlin-Venakides-Zhou99}) 
is transformed into $M(z)$ by \cite[Formula (1.89)]{Deift-Kriecherbauer-McLaughlin-Venakides-Zhou99}, and then to $M^{(1)}(z)$ by 
\cite[Formulas (1.101)--(1.103)]{Deift-Kriecherbauer-McLaughlin-Venakides-Zhou99}. These transformations are given explicitly in terms of the equilibrium 
measure $\mu_0$ and the $g$-function defined in \eqref{def:gs}. $M^{(1)}(z)$ satisfies a Riemann-Hilbert problem (RHP) with 
jump contour $\Sigma^{(2)}$ \cite[Figures 1.5 and 5.3]{Deift-Kriecherbauer-McLaughlin-Venakides-Zhou99}, which divides the complex plane 
into $(N + 1)$ lenses and upper and lower outer infinite regions, as shown schematically in 
Figure \ref{fig:Sigma_2}.
\begin{figure}[htb]
  \begin{minipage}[t]{0.5\linewidth}
    \centering
    \includegraphics{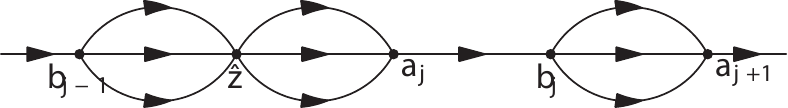}
    \caption{The schematic shape of the contour $\Sigma^{(2)}$.}
    \label{fig:Sigma_2}
  \end{minipage}
  \begin{minipage}[t]{0.5\linewidth}
    \centering
    \includegraphics{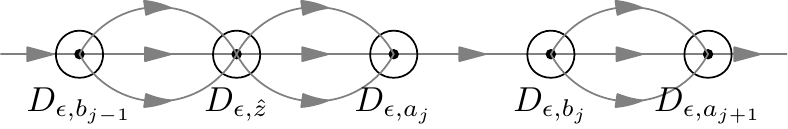}
    \caption{The schematic shape of the contours $D_{\epsilon, a_j}, D_{\epsilon, b_{j - 1}}$ and $D_{\epsilon, \hat{z}}$.}
    \label{fig:discs_interior}
  \end{minipage}
\end{figure}

Next, one constructs the global parametrix $M^{(\infty)}(z)$ on $\mathbb C$ with discontinuity on 
$\mathbb R \setminus J$. $M^{(\infty)}(z)$ is uniformly bounded on 
$\mathbb C\setminus (\bigcup^N_{k = 1} D_{\epsilon, a_k} \cup D_{\epsilon, b_{k - 1}})$, where 
$D_{\epsilon, a_k}$ and $D_{\epsilon, b_{k - 1}}$ are sufficiently small discs with radius $\epsilon$ 
and centered at  $a_j, b_{j - 1}$ respectively, as shown in Figure \ref{fig:discs_interior}. 
Note that in Figure \ref{fig:discs_interior} we also show a small disc $D_{\epsilon, \hat{z}}$ centered 
at $\hat{z}$. $M^{(\infty)}(z)$ is uniformly bounded in $D_{\epsilon, \hat{z}}$, although discontinuous 
on $D_{\epsilon, \hat{z}} \cap \mathbb R$. In all the $(2N + 3)$ small discs, one construct a local 
parametrix, denoted all by $M_p(z)$ (as in \cite[Formula (4.29)]{Deift-Kriecherbauer-McLaughlin-Venakides-Zhou99}). The construction of $M_p(z)$ 
in $D_{\epsilon, a_k}$ and $D_{\epsilon, b_{k - 1}}$ can be carried out explicitly by Airy functions
\cite[Formulas (4.76), (4.92), (4.102)]{Deift-Kriecherbauer-McLaughlin-Venakides-Zhou99}, while the construction of $M_p(z)$ in 
$D_{\epsilon, \hat{z}}$ is given in an implicit way \cite[Formula (5.90)]{Deift-Kriecherbauer-McLaughlin-Venakides-Zhou99}.
  
In the final step the matrix $R(z)$ is defined as
\begin{equation}\label{def:matrixR}
R(z) =
\begin{cases}
 M^{(1)}(z)M^{(\infty)}(z)^{-1} & \text {for } z  \text{ outside the discs}\\
  M^{(1)}(z)M_p(z)^{-1} & \text{ for } z \text{ inside the discs,}
\end{cases}
\end{equation}
and it can be shown to satisfy $R(z) = I + O(n^{-\gamma})$ uniformly on $\mathbb C$, where 
$\gamma = (\kappa + 1)^{-1}$ as in \eqref{def:gamma}. The asymptotics of $Y_n$ are then obtained by 
inverting the explicit transformations from $Y\mapsto M \mapsto M^{(1)}(z)\mapsto R$.

\subsubsection{Proof of Proposition \ref{prop:prop17} (singular interior point case)}
 
\begin{proof}
Let $u, v$ be complex numbers in a compact set, and $x, y$ are expressed by them as in 
\eqref{def:singlimit}, with $x^* = \hat{z}$. First we prove the Proposition 
under the assumption that both $u, v$ are in $\mathbb C_+$, and that $x, y$ are outside of 
the two lenses connecting to $\hat{z}$. 
Then by \cite[Formulas (5.90), (1.89) and (1.101)]{Deift-Kriecherbauer-McLaughlin-Venakides-Zhou99}, we have 
  \begin{multline} \label{eq:(21)_entry_interior_asy}
    \left[ Y_n^{-1}(y) Y_n(x) \right]_{2,1} = e^{n(g(x) + g(y) - \ell - \hat{c})} \\
    \times \left[ \hat{M}_p(\zeta(y))^{-1} \left( e^{-\frac{n \hat{c}}{2} \sigma_3} M^{(\infty)}(y)^{-1} R(y)^{-1} R(x) M^{(\infty)}(x) e^{\frac{n \hat{c}}{2} \sigma_3} \right) \hat{M}_p(\zeta(x)) \right]_{2, 1},
  \end{multline}
  where $\hat{c}$ is defined in \cite[Formula (5.22)]{Deift-Kriecherbauer-McLaughlin-Venakides-Zhou99}, $\hat{M}_p(z)$ is defined by the 
  RHP \cite[Formulas (5.13)--(5.15)]{Deift-Kriecherbauer-McLaughlin-Venakides-Zhou99}, and the mapping $z \mapsto \zeta(z)$ is defined in 
  \cite[Formula (5.23)]{Deift-Kriecherbauer-McLaughlin-Venakides-Zhou99}. Noting that $\hat{c}$ is a purely imaginary number,  
  $M^{(\infty)}(z), M^{(\infty)}(z)^{-1}$ and $\frac{d}{dz}M^{(\infty)}(z)$ are uniformly bounded in 
  a neighborhood of $\hat{z}$, and $y - x = O(n^{-\gamma})$, we have as $n\to\infty$ that
  \begin{equation}
    e^{-\frac{n \hat{c}}{2} \sigma_3} M^{(\infty)}(y)^{-1} R(y)^{-1} R(x) M^{(\infty)}(x) e^{\frac{n \hat{c}}{2} \sigma_3} = I + O(n^{-\gamma})
  \end{equation}
  uniform in $u, v$. On the other hand, by the definition of the mapping $\zeta(z)$, we have \footnote{Here and later several calculations depend on the relation between the density of $\mu_0$, which is denoted by $\Psi(x)$ in \cite{Deift-Kriecherbauer-McLaughlin-Venakides-Zhou99} and $h(x)$ in our paper, and the function $R^{1/2}_+(x) h(x)$ defined in \cite{Deift-Kriecherbauer-McLaughlin-Venakides-Zhou99}. But in \cite{Deift-Kriecherbauer-McLaughlin-Venakides-Zhou99} the relations are stated twice, in \cite[Formulas (1.6) and (3.6)]{Deift-Kriecherbauer-McLaughlin-Venakides-Zhou99}, and they differ by a constant multiple. We use \cite[Formula (3.6)]{Deift-Kriecherbauer-McLaughlin-Venakides-Zhou99}.}
  \begin{equation} \label{eq:zeta(x)_approx}
    \zeta(x) = (2\pi \gamma)^{\gamma} u + O(n^{-\gamma}),\qquad n\to\infty,
  \end{equation}
  uniform in $u$, and a similar expansion holds for $\zeta(y)$. We conclude that
  \begin{equation} \label{eq:Y^-1(y)Y(x)_by_RHP}
    \left[ Y_n^{-1}(y) Y_n(x) \right]_{2,1} = e^{n(g(x) + g(y) - \ell - \hat{c})} \left[ \hat{M}_p \left( (2\pi \gamma)^{\gamma} v \right)^{-1} \hat{M}_p \left( (2\pi \gamma)^{\gamma} u \right) \right]_{2, 1} (1 + O(n^{-\gamma})),
  \end{equation}
  where the $O(n^{-\gamma})$ term is uniform in $u, v$ as $n\to\infty$.

  At last we note that by the definition of $\hat{c}$, the Euler--Lagrange variational condition \eqref{eq:ELequation} and the Taylor expansion \eqref{eq:Gmu0expansion1} of $g'(z)$ at $x^* = \hat{z}$ (where $G_{\mu_0}(w) = g'(w)$), we have
  \begin{equation} \label{eq:approx_V-g-l-c}
    n\left(V(x) - g(x) - \frac{\ell + \hat{c}}{2}\right) = \pi i \gamma u^{\kappa + 1} + O(n^{-\gamma}),
  \end{equation}
  uniformly in $u$ as $n\to\infty$, and a similar approximation holds for $V(y) - g(y) - (\ell + \hat{c})/2$. 
  By \eqref{eq:Y^-1(y)Y(x)_by_RHP} and \eqref{eq:approx_V-g-l-c}, we have that the left-hand side 
  of \eqref{def:singlimit} that is expressed by \eqref{eq:KnMinYn} converges to a limit function in 
  $u, v$ as $n \to \infty$, as long as $u \neq v$. But the condition $u \neq v$ can be easily removed 
  by the analyticity of the functions. Thus we prove Proposition \ref{prop:prop17} for a singular interior 
  point  in case $x$ and $y$ are in the upper half plane and outside of the lenses. 

  If  $x$ or $y$ are inside one of the lenses, or in  $\mathbb C_-$, we compare the jump conditions 
  of the RHPs for $Y_n$, $M^{(1)}$, $M_p$, and $\hat{M}_p$, we find that \eqref{eq:Y^-1(y)Y(x)_by_RHP} still holds, if we replace $\hat{M}_p(z)$ by the function $\tilde{M}_p(z)$, which is an analytic function on $\mathbb C$ that agrees with $\hat{M}_p(z)$ in a sector containing $i\mathbb R_+$, and is the analytic continuation of $\hat{M}_p(z)$ in that sector. Thus Proposition \ref{prop:prop17} for a singular interior point 
  is proved for $u, v$ in any compact subset of $\mathbb C$.
\end{proof}

\subsubsection{Proof of Lemma \ref{lem:Y} for a singular interior point}

\begin{proof}
  We only need to consider the case that $z, w \in \mathbb C_+$, since $Y_n(\bar{z}) = \overline{Y_n(z)}$.
  We note that $\det Y_n(z) = 1$, so the entries of $Y_n(z)^{-1}$ are expressed  in terms of 
  the entries of $Y_n(z)$ in a very simple way. It suffices to show that for any $z \in \mathbb C_+$ 
  on $x^* + i\mathbb R$,
  \begin{equation} \label{eq:ineq_1st_column}
    \left\lvert \left[ Y_n(z) \right]_{1, 1} \right\rvert \leq C \left\lvert e^{ng(z)} \right\rvert, 
    \quad \left\lvert \left[ Y_n(z) \right]_{2, 1} \right\rvert \leq C \left\lvert e^{n(g(z) - \ell)} \right\rvert,
  \end{equation}
  and for any $w \in \mathbb C_+$ on $\Gamma$ or the interval $(x^* - R n^{-\gamma}, x^* + R n^{-\gamma})$, 
  \begin{equation} \label{eq:ineq_2nd_column}
    \left\lvert \left[ Y_n(w) \right]_{1, 2} \right\rvert \leq C \left\lvert e^{-n(g(w) - \ell)} \right\rvert, \quad \left\lvert \left[ Y_n(w) \right]_{2, 2} \right\rvert \leq C \left\lvert e^{-ng(w)} \right\rvert,
  \end{equation}
  where $Y_n(w)$ means $Y_{n, +}(w)$ if $w \in (\hat{z} - R n^{-\gamma}, \hat{z} + R n^{-\gamma})$. 
  In this proof we identify $x^*$ with $\hat{z}$.
  
  First, if $z \in \hat{z} + i\mathbb R$ is out of the disc $D_{\epsilon, \hat{z}}$, then it is in the 
  outer infinite region and outside of any lens, so by the argument in 
  \cite[Paragraph below the proof of Theorem 5.8]{Deift-Kriecherbauer-McLaughlin-Venakides-Zhou99} and \cite[Formulas (1.89) and (1.101)]{Deift-Kriecherbauer-McLaughlin-Venakides-Zhou99},
  \begin{equation} \label{eq:approx_outer_region}
    Y_n(z) = e^{\frac{n \ell}{2} \sigma_3} R(z) M^{(\infty)}(z) e^{n(g(z) - \frac{\ell}{2})\sigma_3},
  \end{equation}
  and \eqref{eq:ineq_1st_column} follows, since both $M^{(\infty)}(z)$ and $R(z)$ are uniformly bounded. 
  If $w \in \Gamma \cup (\hat{z} - R n^{-\gamma}, \hat{z} + R n^{-\gamma})$ is out of the disc 
  $D_{\epsilon, \hat{z}}$, then $w$ is in the outer infinite region and outside of any lens, so
  \eqref{eq:approx_outer_region} holds with $z$ replaced by $w$, and \eqref{eq:ineq_2nd_column} follows.

  Next, if $z \in \hat{z} + i\mathbb R$ is in the semi-disc $D_{\epsilon, \hat{z}} \cap \mathbb C_+$, then 
 since $z$ is outside of the two lenses connecting to $\hat{z}$, by 
 \cite[Formulas (5.90), (1.89) and (1.101)]{Deift-Kriecherbauer-McLaughlin-Venakides-Zhou99} we have
  \begin{equation} \label{eq:Y_n_near_z_hat_above}
    Y_n(z) = e^{\frac{n \ell}{2} \sigma_3} R(z) M^{(\infty)}(z) e^{\frac{n\hat{c}}{2} \sigma_3} \hat{M}_p(\zeta(z))  e^{-\frac{n\hat{c}}{2} \sigma_3} e^{n(g(z) - \frac{\ell}{2})\sigma_3}.
  \end{equation}
  where $\hat{c}$, $\hat{M}_p$ and $\zeta(z)$ are the same as in \eqref{eq:(21)_entry_interior_asy}. 
  By the properties that $\hat{c}$ is purely imaginary and $\hat{M}_p$ is uniformly bounded, we find 
  that \eqref{eq:ineq_1st_column} follows from \eqref{eq:Y_n_near_z_hat_above}. 
  Similarly, if $w \in \Gamma \cup (\hat{z} - R n^{-\gamma}, \hat{z} + R n^{-\gamma})$ is in the 
  semi-disc $D_{\epsilon, \hat{z}} \cup \mathbb C_+$, then we can deform the lenses so that $w$ is outside 
  of the two lenses connecting to $\hat{z}$ as long as $\lvert w - \hat{z} \rvert > 2R n^{-\gamma}$. 
  In this case, \eqref{eq:Y_n_near_z_hat_above} holds with $z$ replaced by $w$, and 
  \eqref{eq:ineq_2nd_column} follows.

  Additionally, if $w \in \Gamma \cup (\hat{z} - R n^{-\gamma}, \hat{z} + R n^{-\gamma})$, $w$ is in the semi-disc $D_{\epsilon, \hat{z}} \cap \mathbb C_+$, and $\lvert w - \hat{z} \rvert \leq 2R n^{-\gamma}$, then by comparing the jump conditions of the RHPs for $Y_n$, $M^{(1)}$, $M_p$ and $\hat{M}_p$, like we do in the proof of Proposition \ref{prop:prop17} (singular interior point case) above, we have that \eqref{eq:Y_n_near_z_hat_above} holds if $z$ is replaced by $w$ and $\hat{M}_p$ is replaced by the function $\tilde{M}_p(z)$, which is an analytic function on $\mathbb C$ that agrees with $\hat{M}_p(z)$ in a sector containing $i\mathbb R_+$, and is the analytic continuation of $\hat{M}_p(z)$ in that sector. Since by \eqref{eq:zeta(x)_approx}, $\zeta(w) = O(1)$ as $n\to\infty$ for our $w$, we have that $\tilde{M}_p(\zeta(w))$ is bounded, and \eqref{eq:ineq_2nd_column} follows.

\end{proof}

\subsection{Singular right-edge point case}

We assume as in \cite[Section 5.3]{Deift-Kriecherbauer-McLaughlin-Venakides-Zhou99} that $a_j$ is the only singular point with exponent $\kappa$, which is denoted by $x^*$ in the main body of the paper.

The strategy in \cite{Deift-Kriecherbauer-McLaughlin-Venakides-Zhou99} is parallel to the singular interior point case and we briefly repeat it, highlighting the differences. $Y_n(z)$ is transformed to $M(z)$, and then to $M^{(1)}(z)$. $M^{(1)}(z)$ satisfies a RHP with jump contour $\Sigma^{(1)}$ \cite[Figure 1.5]{Deift-Kriecherbauer-McLaughlin-Venakides-Zhou99}, which divides the complex plane into $N$ lenses and the upper and lower outer infinite regions.

Next, the construction of the global parametrix $M^{(\infty)}(z)$ is the same as before. The construction 
of $M_p(z)$ in $D_{\epsilon, a_k}$ with $k = 1, \dotsc, j - 1, j + 1, \dotsc, N$ and $D_{\epsilon, b_{k - 1}}$ with $k = 1, \dotsc, N$ are carried out explicitly in terms of Airy functions as before, 
while the construction of $M_p(z)$ in $D_{\epsilon, a_j}$ is given in a more implicit way 
\cite[Formula (5.159)]{Deift-Kriecherbauer-McLaughlin-Venakides-Zhou99}. Finally the matrix $R(z)$ is defined as in \eqref{def:matrixR}, and it 
can be shown to satisfy $R(z) = I + O(n^{-\gamma/2})$ as $n\to\infty$ uniformly on $\mathbb C$, where again 
$\gamma = (\kappa + 1)^{-1}$. Reversing all of the explicit matrix transformations again gives 
asymptotic formulas for $Y_n(z)$.
  
\subsubsection{Proof of Proposition \ref{prop:prop17} for a singular right-edge point}
\begin{proof}
 Let $u, v$ be complex numbers in a compact set, and let $x, y$ be as in \eqref{def:singlimit}, 
 with $x^* = a_j$. First we prove the proposition under the assumption
 that $x$ and $y$ are in the upper half plane and outside of the lens connected to $a_j$. 
 Then by \cite[Formulas (5.159), (1.89) and (1.101)]{Deift-Kriecherbauer-McLaughlin-Venakides-Zhou99}, we have 
  \begin{equation} \label{eq:(21)_entry_edge_asy}
    \left[ Y_n^{-1}(y) Y_n(x) \right]_{2,1} = e^{n(g(x) + g(y) - \ell - i\Omega_j)} \left[ \hat{M}_p(\zeta(y))^{-1} \left( L(y)^{-1} R(y)^{-1} R(x) L(x) \right) \hat{M}_p(\zeta(x)) \right]_{2, 1},
  \end{equation}
  where $\Omega_j$ is the constant defined in \cite[Formula (1.20)]{Deift-Kriecherbauer-McLaughlin-Venakides-Zhou99}, the mapping $z \mapsto \zeta(z)$ is defined in \cite[Formula (5.95)]{Deift-Kriecherbauer-McLaughlin-Venakides-Zhou99}, and the matrix valued function $L$ is defined in \cite[Proof of Theorem 5.8]{Deift-Kriecherbauer-McLaughlin-Venakides-Zhou99}.
  \begin{equation}
    L(z) = M^{(\infty)}(z) \left[ \zeta(y)^{-\frac{\sigma_3}{4}} \frac{1}{\sqrt{2}} 
      \begin{pmatrix}
        1 & 1 \\
        -1 & 1 
      \end{pmatrix}
      e^{- \left( \frac{i \pi}{4} + \frac{ni}{2} \Omega_j \right) \sigma_3} \right]^{-1},
  \end{equation}
  and $\hat{M}_p$ is defined by the RHP \cite[Formulas (5.99)--(5.106)]{Deift-Kriecherbauer-McLaughlin-Venakides-Zhou99}. By the definition of the mapping $\zeta(z)$, we have
  \begin{equation} \label{eq:zeta(x)_approxbis}
    \zeta(x) = (2\pi \gamma)^{\gamma} u + O(n^{-1/(2\kappa + 1)}),\qquad \mbox{ as }n\to\infty
  \end{equation}
  uniform in $u$, and similarly for $\zeta(y)$ in terms of $v$. 
  By the argument in \cite[Proof of Theorem 5.8]{Deift-Kriecherbauer-McLaughlin-Venakides-Zhou99}, $L(z)$ is analytic 
  in $D_{\epsilon, a_j}$, and $\det(L(z)) = 1$. Then for $u, v$ in a compact subset 
  of $\mathbb C$, direct computation yields
  \begin{equation}
    L(x) = n^{\gamma/4} L_0 + n^{-3\gamma/4} x L_1 + O(n^{-5\gamma/4}),\qquad \mbox{ as }n\to\infty,
  \end{equation}
  where $L_0, L_1$ are constant matrices, and a similar result for $L(y)$. Then we have
 \begin{equation}
   L(y)^{-1} R(y)^{-1} R(x) L(x) = I + O(n^{-\gamma/2}),\qquad \mbox{ as }n\to\infty,
  \end{equation}
  uniform in $u, v$. We conclude that
  \begin{equation} \label{eq:Y^-1(y)Y(x)_by_RHP_edge}
    \left[ Y_n^{-1}(y) Y_n(x) \right]_{2,1} = e^{n(g(x) + g(y) - \ell - i\Omega_j)} \left[ \hat{M}_p \left( (2\pi \gamma)^{\gamma} v \right)^{-1} \hat{M}_p \left( (2\pi \gamma)^{\gamma} u \right) \right]_{2, 1} (1 + O(n^{-\gamma/2})),
  \end{equation}
  where the $O((n^{-\gamma/2}))$ term is uniform in $u, v$ as $n\to\infty$.

  By the Euler--Lagrange variational condition \eqref{eq:ELequation} and the Puiseux expansion \eqref{eq:Gmu0expansion2} of $g'(z)$ at $x^* = a_j$ (where $G_{\mu_0}(w) = g'(w)$), we have
  \begin{equation} \label{eq:approx_V-g-l-cbis}
   n\left( V(x) - g(x) - \frac{\ell + i\Omega_j}{2}\right) = \pi \gamma u^{\kappa + 1} + O(n^{-\gamma}),\qquad \mbox{ as }n\to\infty,
  \end{equation}
  uniformly in $u$, and a similar approximation holds for $V(y) - g(y) - (\ell + i\Omega_j)/2$. 
  By \eqref{eq:Y^-1(y)Y(x)_by_RHP_edge} and \eqref{eq:approx_V-g-l-cbis}, we have that the left-hand 
  side of \eqref{def:singlimit} that is expressed by \eqref{eq:KnMinYn} converges to a limit function 
  in $u, v$ as $n \to \infty$, as long as $u \neq v$. But the condition $u \neq v$ can be easily 
  removed by the analyticity of the functions. Thus we prove Proposition \ref{prop:prop17} for a 
  singular right-edge point in  case $x$ and $y$ are outside of the lens, and in the upper half-plane. 

 For general $x$ and $y$ we use \cite[Formula (5.91)]{Deift-Kriecherbauer-McLaughlin-Venakides-Zhou99} for the formula of $M_p(x), M_p(y)$ if 
 $x$ or $y$ lies in $\mathbb C_-$. We still obtain \eqref {eq:Y^-1(y)Y(x)_by_RHP_edge} with $\hat{M}_p(z)$
  replaced by the $\tilde{M}_p(z)$ which is defined by analytic continuation 
 of $\hat{M}_p(z)$ from the sector containing $i\mathbb R_+$.

 Thus Proposition \ref{prop:prop17} for a singular right-edge point is proved for $u, v$ in 
 any compact subset of $\mathbb C$.
\end{proof}

\subsubsection{Proof of Lemma \ref{lem:Y} for a singular right-edge point}
\begin{proof}
  As in the singular interior point case, it suffices to consider $z, w \in \mathbb C_+$, and 
  show that for any $z$ on $x^* + i\mathbb R$,
  \begin{equation} \label{eq:ineq_1st_column_edge}
    \left\lvert \left[ Y_n(z) \right]_{1, 1} \right\rvert \leq C n^{\gamma/2} \left\lvert e^{ng(z)} \right\rvert, \quad 
    	\left\lvert \left[ Y_n(z) \right]_{2, 1} \right\rvert \leq C n^{\gamma/2} \left\lvert e^{n(g(z) - \ell)} \right\rvert,
  \end{equation}
  and for any $w$ on $\Gamma$ or the interval $(x^* - R n^{-\gamma}, x^* + R n^{-\gamma})$, 
  \begin{equation} \label{eq:ineq_2nd_column_edge}
    \left\lvert \left[ Y_n(w) \right]_{1, 2} \right\rvert < C n^{\gamma/2} \left\lvert e^{-n(g(w) - \ell)} \right\rvert, \quad \left\lvert \left[ Y_n(w) \right]_{2, 2} \right\rvert < C n^{\gamma/2} \left\lvert e^{-ng(w)} \right\rvert,
  \end{equation}
  where $Y_n(w)$ means $Y_{n, +}(w)$ if $w \in (x^* - R n^{-\gamma}, x^* + R n^{-\gamma})$. Note that in this proof we identify $x^*$ with $a_j$.
  
  First, if $z \in a_j + i\mathbb R$ or $w \in \Gamma \cup (a_j - R n^{-\gamma}, a_j + R n^{-\gamma})$ is
   outside of the disc $D_{\epsilon, a_j}$, by \eqref{eq:approx_outer_region} and the same argument as in 
   the singular interior point case, both \eqref{eq:ineq_1st_column_edge} and 
   \eqref{eq:ineq_2nd_column_edge} hold.

  Next, if $z \in a_j + i\mathbb R$ is in the semi-disc $D_{\epsilon, a_j} \cap \mathbb C_+$, then since $z$ is outside of the lens connecting to $a_j$, by \cite[Formulas (5.159), (1.89) and (1.101)]{Deift-Kriecherbauer-McLaughlin-Venakides-Zhou99} we have
  \begin{equation} \label{eq:Y_n_near_z_hat_above_edge}
    Y_n(z) = e^{\frac{n \ell}{2} \sigma_3} R(z) L(z) \hat{M}_p(\zeta(z))e^{-\frac{in}{2} \Omega_j \sigma_3} e^{n(g(z) - \frac{\ell}{2})\sigma_3},
  \end{equation}
  where $\Omega_j$, $L$, $\hat{M}_p$ and $\zeta(z)$ are the same as in \eqref{eq:(21)_entry_edge_asy}. From the definition of $L$, we know that $L(z) = O(n^{\gamma/4})$ if $z \in D_{\epsilon, a_j}$. Also we have $\zeta(z) = O( n^{\gamma})$ for $z \in D_{\epsilon, a_j}$. Then from the RHP satisfied to $\hat{M}_p$, especially the boundary condition \cite[Formula (5.105)]{Deift-Kriecherbauer-McLaughlin-Venakides-Zhou99}, we have that $\hat{M}_p(\zeta(z)) = O(n^{\gamma/4})$. Thus we find that \eqref{eq:ineq_1st_column_edge} follows from \eqref{eq:Y_n_near_z_hat_above_edge}. Similarly, if $w \in \Gamma \cup (a_j - R n^{-\gamma}, a_j + R n^{-\gamma})$ is in the semi-disc $D_{\epsilon, a_j} \cup \mathbb C_+$, then we can deform the lenses such that $w$ is outside of the lens connecting to $a_j$ unless $\lvert w - a_j \rvert < 2R n^{-\gamma}$. In this case, \eqref{eq:Y_n_near_z_hat_above_edge} holds with $z$ replaced by $w$, and \eqref{eq:ineq_2nd_column_edge} follows.

  Additionally, if $w \in \Gamma \cup (a_j - R n^{-\gamma}, a_j + R n^{-\gamma})$, $w$ is in the semi-disc $D_{\epsilon, a_j} \cap \mathbb C_+$, and $\lvert w - a_j \rvert < 2R n^{-\gamma}$, then by comparing the jump conditions of the RHPs for $Y_n$, $M^{(1)}$, $M_p$ and $\hat{M}_p$, like we do in the proof of the singular interior point case in Section \ref{subsec:interior_RHP}, we have that \eqref{eq:Y_n_near_z_hat_above_edge} holds if $z$ is replaced by $w$ and $\hat{M}_p$ is replaced by the function $\tilde{M}_p(z)$, which is an analytic function on $\mathbb C$ that agrees with $\hat{M}_p(z)$ in a sector containing $i\mathbb R_+$, and is the analytic continuation of $\hat{M}_p(z)$ in that sector. Since the estimates of $L(z)$ still holds and $\hat{M}_p(\zeta(w)) = O(1)$ as $n\to\infty$ if $\lvert w - a_j \rvert < 2Rn^{-\gamma}$, we also find that \eqref{eq:ineq_2nd_column_edge} holds.
\end{proof}

\subsubsection*{Acknowledgements}

T.C. and A.B.J.K. are supported by the Belgian Interuniversity Attraction Pole P07/18.

T.C. was supported by the European Research Council
under the European Union’s Seventh Framework Programme (FP/2007/2013)/ ERC
Grant Agreement n. 307074.

A.B.J.K. is supported by long term structural funding-Methusalem grant of the Flemish Government, 
by KU Leuven Research Grant OT/12/073, and by FWO Flanders projects G.0934.13 and G.0864.16.

K.L. is supported by a grant from the Simons Foundation (\#357872, Karl Liechty).

D.W. is supported by the Singapore AcRF Tier 1 grant R-146-000-217-112.

\end{document}